\documentclass[11pt]{article}
\usepackage{color}
\usepackage[colorlinks=true]{hyperref}
\usepackage[margin=1in]{geometry}
\usepackage{amssymb}
\usepackage{amsmath}
\usepackage{amsthm}
\usepackage{mathtools}
\usepackage{epsfig, graphicx}
\usepackage{lipsum}
\usepackage{amsfonts}
\usepackage{graphicx}
\usepackage{epstopdf}
\usepackage[boxed, noline, ruled, linesnumbered]{algorithm2e}
\usepackage{algorithmic}
\usepackage{stmaryrd}
\usepackage{cleveref}
\newtheorem{theorem}{Theorem}[section]
\newtheorem{lemma}{Lemma}[section]

\newtheorem{remark}{Remark}[section]

\makeatletter 

\@addtoreset{equation}{section}
\makeatother 
\allowdisplaybreaks 

\linespread{1.1}
\begin{document}
\title{Solving High Dimensional Partial Differential Equations Using  
Tensor Neural Network and A Posteriori Error Estimators\footnote{This work was
supported in part by the National Key Research and Development Program of China
(2019YFA0709601), the National Center for Mathematics and Interdisciplinary Science, CAS.}}
\author{ 
Yifan Wang\footnote{LSEC, NCMIS, Institute
of Computational Mathematics, Academy of Mathematics and Systems
Science, Chinese Academy of Sciences, Beijing 100190,
China,  and School of Mathematical Sciences, University
of Chinese Academy of Sciences, Beijing 100049, China (wangyifan@lsec.cc.ac.cn).}, 
\ \ 
Zhongshuo Lin\footnote{LSEC, NCMIS, Institute
of Computational Mathematics, Academy of Mathematics and Systems
Science, Chinese Academy of Sciences, Beijing 100190,
China,  and School of Mathematical Sciences, University
of Chinese Academy of Sciences, Beijing 100049, China (linzhongshuo@lsec.cc.ac.cn).}, \ \ 
Yangfei Liao\footnote{LSEC, NCMIS, Institute
of Computational Mathematics, Academy of Mathematics and Systems
Science, Chinese Academy of Sciences, Beijing 100190,
China,  and School of Mathematical Sciences, University
of Chinese Academy of Sciences, Beijing 100049, China (liaoyangfei@lsec.cc.ac.cn).}, \ \ 
Haochen Liu\footnote{LSEC, NCMIS, Institute
of Computational Mathematics, Academy of Mathematics and Systems
Science, Chinese Academy of Sciences, Beijing 100190,
China,  and School of Mathematical Sciences, University
of Chinese Academy of Sciences, Beijing 100049, China (liuhaochen@lsec.cc.ac.cn).}\ \ \  and \ \
Hehu Xie\footnote{LSEC, NCMIS, Institute
of Computational Mathematics, Academy of Mathematics and Systems
Science, Chinese Academy of Sciences, Beijing 100190,
China,  and School of Mathematical Sciences, University
of Chinese Academy of Sciences, Beijing 100049, China (hhxie@lsec.cc.ac.cn).}}

\date{}
\maketitle

\begin{abstract}
In this paper, based on the combination of tensor neural network and 
a posteriori error estimator, a novel type of machine learning method is proposed 
to solve high-dimensional boundary value problems with homogeneous and non-homogeneous 
Dirichlet or Neumann type of boundary conditions and eigenvalue problems 
of the second-order elliptic operator. The most important advantage of the tensor neural  
network is that the high dimensional integrations of tensor neural networks 
can be computed with high accuracy and high efficiency. 
Based on this advantage and the theory of a posteriori error estimation, 
the a posteriori error estimator is adopted to design the loss function to 
optimize the network parameters adaptively. The applications of tensor neural network 
and the a posteriori error estimator improve the accuracy of the corresponding 
machine learning method. The theoretical analysis and numerical examples 
are provided to validate the proposed methods.

\vskip0.3cm {\bf Keywords.} Tensor neural network, a posteriori error estimates, machine learning, 
second order elliptic operator, high dimensional boundary value problems, 
eigenvalue problem.
\vskip0.2cm {\bf AMS subject classifications.}  68T07, 65L70, 65N25, 65B99.
\end{abstract}

\section{Introduction}
In this paper, we are concerned with the high dimensional problems associated with the 
following second-order elliptic operator
\begin{equation}\label{elliptic_operator}
Lu(x):= -\nabla\cdot(\mathcal A\nabla u(x))+  b(x)u(x),\ \ \ x\in\Omega,
\end{equation} 
where $\Omega:=(a_1, b_1)\times\cdots \times (a_d,b_d)$ is 
a $d$-dimensional domain ($d>3$), $\mathcal A \in \mathbb R^{d\times d}$ is a symmetric 
positive definite matrix and the function $b(x)$ has a positive lower bound.
Here, we will discuss the machine learning type of numerical methods 
for solving boundary value problems with homogeneous and non-homogeneous 
Dirichlet or Neumann type of  boundary conditions,  eigenvalue problems of the operator $L$, 
respectively.

It is well known that solving partial differential equations (PDEs) is one of the 
most essential tasks in modern science and engineering society.  
There have developed successful numerical methods such as finite difference, 
finite element, and spectral method for solving PDEs in three spatial dimensions 
plus the temporal dimension. Meanwhile, many high-dimensional PDEs exist, 
which are nearly impossible to be 
solved using traditional numerical methods, 
such as many-body Schr\"{o}dinger, Boltzmann equations, Fokker-Planck equations, 
and stochastic PDEs (SPDEs). 
Due to its universal approximation property, the fully connected neural network (FNN) 
is the most widely used architecture to build the functions for solving 
high-dimensional PDEs. There are several types of well-known FNN-based methods 
such as deep Ritz \cite{EYu}, deep Galerkin method \cite{DGM}, 
PINN \cite{RaissiPerdikarisKarniadakis}, and weak adversarial networks \cite{WAN}
for solving high-dimensional PDEs by designing different loss functions. 
Among these methods, the loss functions always include computing 
high-dimensional integration for the functions defined by FNN. 
For example,  the loss functions of the deep Ritz method require computing 
the integrations on the high-dimensional domain for the functions constructed 
by FNN. Direct numerical integration for the high-dimensional functions also 
meets the ``curse of dimensionality''. 
Always, the high-dimensional integration is computed using the Monte-Carlo 
method along with some sampling tricks \cite{EYu,HanZhangE}. 
Due to the low convergence rate of the Monte-Carlo method, the solutions obtained 
by the FNN-based numerical methods are challenging to achieve high accuracy and 
stable convergence process. This means that the Monte-Carlo method decreases 
computational work in each forward propagation while decreasing the simulation 
accuracy, efficiency and stability of the FNN-based numerical methods 
for solving high-dimensional PDEs. When solving nonhomogeneous 
boundary value problems, it is difficult to 
choose the numbers of sampling points on the boundary and in the domain. 
Furthermore, for  solving non-homogeneous Dirichlet boundary value problems, besides 
the difficulty of choosing sampling points, it is also very difficult to set the 
hyperparameter to balance the loss from the boundary and interior domain.

Recently, we propose a type of tensor neural network (TNN) and 
the corresponding machine learning method,  
aiming to solve high dimensional problems with high accuracy 
\cite{WangJinXie,WangLiaoXie,WangXie}. 
The most important property of TNN is that the corresponding high-dimensional 
functions can be easily integrated with high accuracy and high efficiency. 
Then, the deduced machine 
learning method can achieve high accuracy in solving high-dimensional problems. 
The reason is that the integration of TNN functions can be separated into one-dimensional 
integrations which can be computed by classical quadratures with high accuracy.  
The TNN-based machine learning method has already been used to solve high-dimensional 
eigenvalue problems and boundary value problems based on the Ritz type of loss functions. 
Furthermore,  in \cite{WangXie},  the multi-eigenpairs can also be computed with 
the machine learning method designed by combining the TNN and Rayleigh-Ritz process. 
The high accuracy of this type of machine learning method depends on the essential 
fact that the high dimensional integrations associated with TNNs 
can be computed with high accuracy. The TNN is also used to solve 
20,000 dimensional Schr\"{o}dinger equation with coupled quantum harmonic 
oscillator potential function \cite{HuShuklaKarniadakisKawaguchi}, high 
dimensional  Fokker-Planck equations \cite{WangHuKawaguchiZhangKarniadakis} 
and high dimensional time-dependent problems \cite{KaoZhaoZhang}.

In the finite element method, the a posteriori error estimate is a 
standard technique for solving the 
PDEs with singularity. With the a posteriori error estimate, we can know the 
accuracy of the concerned approximations. The a posteriori error estimate based on 
the hypercycle technique can give the guaranteed upper bound of the approximations 
of the elliptic type of PDEs \cite{ainsworth1999reliable,ainsworth2011fully, 
vejchodsky2012complementarity}.

In this paper, we design a new machine learning method by combining the 
TNN and the a posteriori error estimate for
solving high-dimensional PDEs associated with 
differential operator (\ref{elliptic_operator}).   
Different from the existed work, the a posteriori error estimate 
is first adopted to build the loss function for the  
TNN-based machine learning method.   
Then, the loss function directly bounds the error of the TNN approximations for 
the high dimensional problems. The essential idea is that we can compute 
the high-dimensional integrations of TNN functions included 
in the a posteriori error estimators with high accuracy.  
Different from common training methods, the training step is decomposed into 
two substeps including the Galerkin step for the coefficient 
and the optimization step for updating the neural networks. 
This separation scheme obviously improves the accuracy of TNN based machine learning method.
The application of the a posteriori error estimator improvers the accuracy of the 
corresponding machine learning method.  We will also find that there 
is no difficulty in choosing sampling points and balancing hyperparameter 
for solving non-homogeneous Dirichlet and Neumann 
boundary value problems with TNN based machine learning method.

An outline of the paper goes as follows. In Section \ref{Section_TNN}, 
we introduce the structure to build TNN and its numerical integration method and 
approximation property. 
Section \ref{Section_Posteriori} is devoted to proposing the a posteriori error estimates 
for the concerned problems in this paper. 
The TNN-based machine learning method based on the a posteriori error 
estimators and optimization process is designed 
in Section \ref{TNN_Machine_Learning}. 
Section \ref{Section_Numerical} gives some numerical examples to validate the accuracy and 
efficiency of the proposed TNN-based machine learning method. 
Some concluding remarks are given in the last section.

\section{Tensor neural network and its quadrature scheme}\label{Section_TNN}
In this section, we introduce the TNN structure and the quadrature scheme for 
the high-dimensional TNN functions.
Also included in this section is a discussion of the approximation properties, 
some techniques to improve the numerical stability, 
the complexity estimate of the high dimensional integrations of the TNN functions.

\subsection{Tensor neural network architecture}\label{Section_TNN_Archictecture}
This subsection is devoted to introducing the TNN structure and some techniques 
to improve the stability of the corresponding machine learning methods.
The approximation properties of TNN functions have been discussed and investigated in \cite{WangJinXie}.
In order to express clearly and facilitate the construction of the TNN method for solving
high-dimensional PDEs, here, we will also introduce some important definitions and properties.

TNN is built by the tensor products of one-dimensional functions which come from
$d$ subnetworks with one-dimensional input and multidimensional output, where
$d$ denotes the spatial dimension of the concerned problems which will be solved by the
machine learning method in this paper. 
For each $i=1,2,\cdots,d$, we use $\Phi_i(x_i;\theta_i)=(\phi_{i,1}(x_i;\theta_i),
\phi_{i,2}(x_i;\theta_i), \cdots,\phi_{i,p}(x_i;\theta_i))$
to denote a subnetwork that maps a set $\Omega_i\subset\mathbb R$ to $\mathbb R^p$,
where $\Omega_i,i=1,\cdots,d,$ can be a bounded interval $(a_i,b_i)$, 
the whole line $(-\infty,+\infty)$
or the half line $(a_i,+\infty)$ \cite{WangXie}.
The number of layers and neurons in each layer, the selections of 
activation functions and other hyperparameters can be different in different subnetworks. 
TNN consists of $p$ correlated rank-one functions,
which are composed of the multiplication of $d$ one-dimensional input functions 
in different directions.
Figure \ref{TNNstructure} shows the corresponding architecture of TNN.

In order to improve the numerical stability, we normalize each $\phi_{i,j}(x_i)$
and use the following normalized TNN structure:
\begin{eqnarray}\label{def_TNN_normed}
\Psi(x;c,\theta)&=&\sum_{j=1}^pc_j\widehat\phi_{1,j}(x_1;\theta_1)
\cdots\widehat\phi_{d,j}(x_d;\theta_d)=\sum_{j=1}^pc_j\prod_{i=1}^d\widehat\phi_{i,j}(x_i;\theta_i),
\end{eqnarray}
where each $c_j$ is a scaling parameter which describes 
the length of each rank-one function, $c=\{c_j\}_{j=1}^{p}$ 
is a set of trainable parameters, $\{c,\theta\}=\{c,\theta_1,\cdots,\theta_d\}$
denotes all parameters of the whole architecture.
For $i=1,\cdots,d,j=1,\cdots,p$, $\widehat\phi_{i,j}(x_i;\theta_i)$ is a 
$L^2$-normalized function as follows:
\begin{eqnarray*}
\widehat\phi_{i,j}(x_i;\theta_i)
=\frac{\phi_{i,j}(x_i;\theta_i)}{\|\phi_{i,j}(x_i;\theta_i)\|_{L^2(\Omega_i)}}.
\end{eqnarray*}
For simplicity of notation, $\phi_{i,j}(x_i;\theta_i)$ 
denotes the normalized function in the following parts.

The TNN architecture (\ref{def_TNN_normed}) and the architecture defined 
in \cite{WangJinXie} are mathematically equivalent, but (\ref{def_TNN_normed}) 
has better numerical stability during the training process.
From Figure \ref{TNNstructure} and numerical tests, we can find the parameters for 
each rank of TNN are correlated by the FNN, which guarantees the stability 
of the TNN-based machine learning methods. This is also an important difference 
from the tensor finite element methods.
\begin{figure}[htb]
\centering
\includegraphics[width=10cm,height=7.5cm]{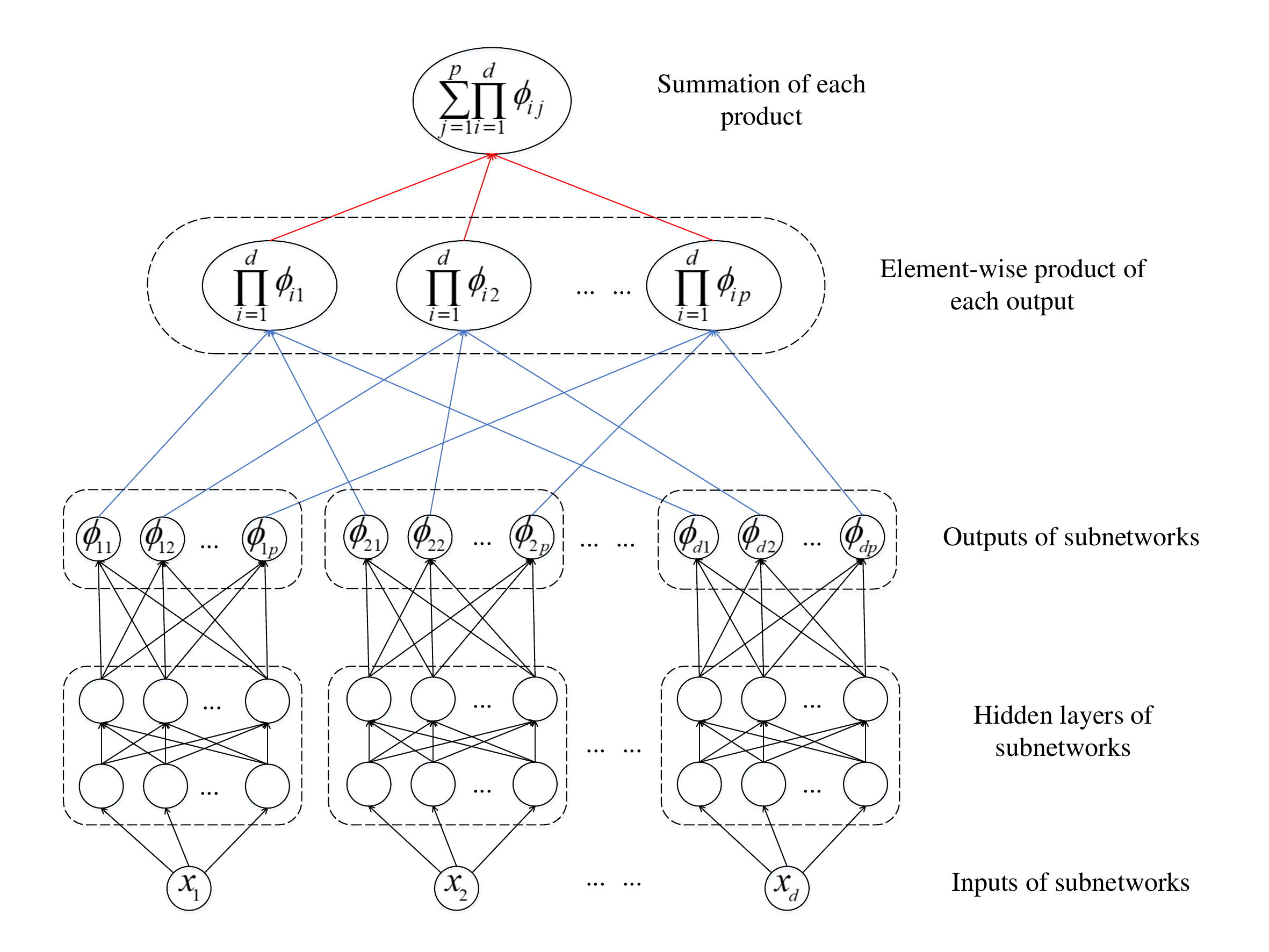}
\caption{Architecture of TNN. Black arrows mean linear transformation
(or affine transformation). Each ending node of blue arrows is obtained by taking the
scalar multiplication of all starting nodes of blue arrows that end in this ending node.
The final output of TNN is derived from the summation of all starting nodes of red 
arrows.}\label{TNNstructure}
\end{figure}

In order to show the reasonableness of TNN, we now introduce the approximation property 
from \cite{WangJinXie}. Since there exists the isomorphism relation between $H^m(\Omega_1\times\cdots\times\Omega_d)$
and the tensor product space $H^m(\Omega_1)\otimes\cdots\otimes H^m(\Omega_d)$,
the process of approximating the function $f(x)\in H^m(\Omega_1\times\cdots\times\Omega_d)$
by the TNN defined as (\ref{def_TNN_normed}) can be regarded as searching for 
a correlated CP decomposition structure
to approximate $f(x)$ in the space $H^m(\Omega_1)\otimes\cdots\otimes H^m(\Omega_d)$
with the rank being not greater than $p$.
The following approximation result to the functions in the space 
$H^m(\Omega_1\times\cdots\times\Omega_d)$ under the sense of $H^m$-norm is 
proved in \cite{WangJinXie}.
\begin{theorem}\cite[Theorem 1]{WangJinXie}\label{theorem_approximation}
Assume that each $\Omega_i$ is an interval in $\mathbb R$ for $i=1, \cdots, d$, $\Omega=\Omega_1\times\cdots\times\Omega_d$,
and the function $f(x)\in H^m(\Omega)$. Then for any tolerance $\varepsilon>0$, there exist a
positive integer $p$ and the corresponding TNN defined by (\ref{def_TNN_normed})
such that the following approximation property holds
\begin{equation*}
\|f(x)-\Psi(x;\theta)\|_{H^m(\Omega)}<\varepsilon.
\end{equation*}
\end{theorem}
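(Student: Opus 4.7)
\bigskip

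\noindent\textbf{Proof proposal.} The plan is to reduce the theorem to two classical facts: (i) the density of finite-rank tensors in the Hilbert tensor product $H^m(\Omega_1)\otimes\cdots\otimes H^m(\Omega_d)$, which is isometrically isomorphic to $H^m(\Omega)$; and (ii) the universal approximation property of feed-forward neural networks in $H^m$-norm on an interval, with multi-dimensional output. Combining these, the first step constructs a rank-$p$ separated approximation of $f$, and the second step realizes the one-dimensional factors by the subnetworks $\Phi_i(\cdot;\theta_i)$.

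First, given $f \in H^m(\Omega)$ and a target tolerance $\varepsilon > 0$, I would use the isomorphism $H^m(\Omega_1\times\cdots\times\Omega_d) \cong H^m(\Omega_1)\hat\otimes\cdots\hat\otimes H^m(\Omega_d)$ to choose an integer $p$ and one-dimensional functions $g_{i,j}\in H^m(\Omega_i)$ (for $i=1,\dots,d$, $j=1,\dots,p$) such that
\begin{equation*}
\Bigl\|\,f - \sum_{j=1}^{p} g_{1,j}\otimes g_{2,j}\otimes\cdots\otimes g_{d,j}\,\Bigr\|_{H^m(\Omega)} < \varepsilon/2.
\end{equation*}
This step uses the density of algebraic tensors in the projective/Hilbert tensor product and the standard identification of Sobolev spaces on product domains. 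Normalizing each $g_{i,j}$ and collecting the scaling constants into coefficients $c_j$, I recover the rank-$p$ separated form underlying (\ref{def_TNN_normed}).

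Second, for each direction $i$, I would treat the collection $\{g_{i,1},\dots,g_{i,p}\}$ as a vector-valued function $G_i:\Omega_i\to\mathbb{R}^p$ in $(H^m(\Omega_i))^p$ and invoke the universal approximation theorem for FNNs with sufficiently smooth activations, applied componentwise, to produce a subnetwork $\Phi_i(\cdot;\theta_i)$ whose components $\phi_{i,j}(\cdot;\theta_i)$ approximate $g_{i,j}$ in $H^m(\Omega_i)$ to arbitrarily prescribed accuracy $\delta$. Finally, I would expand the difference
\begin{equation*}
\sum_{j=1}^p c_j \prod_{i=1}^d g_{i,j} \;-\; \sum_{j=1}^p c_j\prod_{i=1}^d \widehat{\phi}_{i,j}(\cdot;\theta_i)
\end{equation*}
as a telescoping sum over one factor at a time and control each term using the Banach-algebra property of $H^m$ on bounded intervals (or, when $m$ is large, direct Leibniz-rule estimates on products of one-dimensional Sobolev functions), so that choosing $\delta$ small enough as a function of $\varepsilon$, $p$, $d$, and the norms $\|g_{i,j}\|_{H^m(\Omega_i)}$ yields a total error below $\varepsilon/2$. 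Combining with the first step gives the claim.

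The main obstacle I expect is controlling the $H^m$-norm of tensor products of one-dimensional functions and their perturbations: $H^m$ is not a Banach algebra on unbounded intervals without extra weights, and even on intervals the product estimate involves sums over all multi-indices $|\alpha|\le m$, so that derivatives distribute across the $d$ factors. Handling this cleanly requires either a Leibniz-type bound of the form $\|\prod_i u_i\|_{H^m(\Omega)} \lesssim \prod_i \|u_i\|_{H^m(\Omega_i)}$ (valid on intervals by Sobolev embedding into $C^{m-1}$ when $m\ge 1$, and uniformly boundable via the normalization $\|\widehat{\phi}_{i,j}\|_{L^2}=1$ together with control of the higher derivatives of the subnetwork approximations) or, in the unbounded case, a weighted version that still permits tensorization. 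Once this multiplicative perturbation bound is established, the remainder of the argument is a routine application of the triangle inequality.
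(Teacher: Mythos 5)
The paper does not actually prove this theorem: it is quoted verbatim from \cite[Theorem 1]{WangJinXie}, so there is no in-paper proof to compare against. Your outline is nonetheless the standard argument (and, judging from the surrounding discussion of CP decompositions in $H^m(\Omega_1)\otimes\cdots\otimes H^m(\Omega_d)$, the intended one): density of finite-rank separable functions in $H^m(\Omega)$, then univariate universal approximation of each factor in $H^m(\Omega_i)$ by the subnetworks, then a telescoping perturbation bound. Two points deserve correction. First, $H^m(\Omega_1)\hat\otimes\cdots\hat\otimes H^m(\Omega_d)$ with its natural Hilbert crossnorm is the mixed-smoothness space (essentially $H^m_{\rm mix}$), which embeds continuously and densely into $H^m(\Omega)$ but is \emph{not} isometrically, or even topologically, isomorphic to it; the paper's own phrase ``isomorphism relation'' is loose on the same point. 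All your Step 1 actually requires is density of the algebraic tensors in $H^m(\Omega)$, which does hold (on bounded intervals via tensor-product polynomials or splines, on unbounded intervals via Hermite functions), so the step survives once rephrased. Second, the multiplicative perturbation bound you flag as the main obstacle is much easier than you suggest: because the factors depend on \emph{disjoint} variables, $\partial^\alpha\prod_{i=1}^d u_i(x_i)=\prod_{i=1}^d u_i^{(\alpha_i)}(x_i)$ with no Leibniz expansion at all, and by Fubini
\begin{equation*}
\Bigl\|\prod_{i=1}^d u_i\Bigr\|_{H^m(\Omega)}^2
=\sum_{|\alpha|\le m}\prod_{i=1}^d\bigl\|u_i^{(\alpha_i)}\bigr\|_{L^2(\Omega_i)}^2
\le\prod_{i=1}^d\sum_{k=0}^m\bigl\|u_i^{(k)}\bigr\|_{L^2(\Omega_i)}^2
=\prod_{i=1}^d\|u_i\|_{H^m(\Omega_i)}^2,
\end{equation*}
so the crossnorm bound holds with constant one, with no Banach-algebra property, no Sobolev embedding, and no restriction to bounded intervals. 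With that estimate your telescoping sum closes immediately, and the argument is complete; the only remaining care needed is in the unbounded-interval case, where the univariate universal approximation in $H^m(\Omega_i)$ must be arranged with suitable decay (as in the Hermite-based treatment of \cite{WangXie}).
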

Although there is no general result to give the relationship between the hyperparameter $p$ 
and error bound $\varepsilon$,
we also provided an estimate of the rank $p$ under a smoothness assumption. 
For easy understanding, we focus on the periodic setting with 
$I^d=I\times I\times\cdots\times I=[0,2\pi]^d$ and 
the approximations property of TNN to the functions in the linear space which 
is defined with Fourier basis.
Note that similar approximation results of TNN can be extended to the non-periodic functions. 
For each variable $x_i\in[0,2\pi]$, let us define the one-dimensional 
Fourier basis $\{\varphi_{k_i}(x_i):= \frac{1}{\sqrt{2\pi}}e^{{\rm i}k_ix_i},k_i\in\mathbb Z\}$ 
and classify functions via the decay of their Fourier coefficients.
Further denote multi-index $k=(k_1,\cdots,k_d)\in\mathbb Z^d$ and $x=(x_1,\cdots,x_d)\in I^d$. 
Then the $d$-dimensional Fourier basis can be built in the tensor product way 
\begin{eqnarray*}
\varphi_k(x):=\prod_{i=1}^d\varphi_{k_i}(x_i)=(2\pi)^{-d/2}e^{{\rm i}k\cdot x}.
\end{eqnarray*}
We denote $\lambda_{\rm mix}(k):=\prod_{i=1}^d\left(1+|k_i|\right)$ and
$\lambda_{\rm iso}(k):= 1+\sum_{i=1}^d|k_i|$. 
Now, for $-\infty<t,\ell<\infty$, we define the space $H_{\rm mix}^{t,\ell}(I^d)$ as follows 
(cf. \cite{GriebelHamaekers,Knapek})
{\footnotesize 
\begin{eqnarray*}
H_{\rm mix}^{t,\ell}(I^d)=\left\{u(x)=\sum_{k\in\mathbb Z^d}c_k\varphi_k(x):
\|u\|_{H_{\rm mix}^{t,\ell}(I^d)}=\left(\sum_{k\in\mathbb Z^d}\lambda_{\rm mix}(k)^{2t}
\cdot\lambda_{\rm iso}(k)^{2\ell}\cdot|c_k|^2\right)^{1/2}<\infty \right\}. 
\end{eqnarray*}}
Note that the parameter $\ell$ governs the isotropic smoothness, whereas $t$ 
governs the mixed smoothness.
The space $H_{\rm mix}^{t,\ell}(I^d)$ gives a quite flexible framework for 
the study of problems in Sobolev spaces. 
For more information about the space $H_{\rm mix}^{t,\ell}(I^d)$, please refer to \cite{GriebelHamaekers,GriebelKnapek,Knapek}.

Thus, \cite{WangJinXie} gives the following comprehensive error estimate for TNN.
\begin{theorem}\label{theorem_aprrox_rate}
Assume function $f(x)\in H_{\rm mix}^{t,\ell}(I^d)$, $t>0$ and $m>\ell$. 
Then there exists a TNN $\Psi(x;\theta)$ defined by (\ref{def_TNN_normed}) 
such that the following approximation property holds
\begin{eqnarray}
\|f(x)-\Psi(x;\theta)\|_{H^m(I^d)}\leq C(d)\cdot p^{-(\ell-m+t)}\cdot
\|u\|_{H_{\rm mix}^{t,\ell}(I^d)},
\end{eqnarray}
where $C(d)\leq c\cdot d^2\cdot0.97515^d$ with $c$ being independent of $d$. 
And each subnetwork of TNN is a FNN which is built by using $\sin(x)$ as 
the activation function and one hidden layer with $2p$ neurons, see Figure \ref{TNNstructure}. 
\end{theorem}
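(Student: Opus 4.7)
The plan is to establish the approximation rate through the classical hyperbolic cross (sparse grid) approach for Sobolev spaces of mixed smoothness. First, I would expand $f$ in the $d$-dimensional tensor Fourier basis as $f(x)=\sum_{k\in\mathbb Z^d}c_k\varphi_k(x)$ and truncate to the hyperbolic cross
$$A(N)=\{k\in\mathbb Z^d:\lambda_{\rm mix}(k)\le N\},\qquad f_N(x):=\sum_{k\in A(N)}c_k\varphi_k(x).$$
Since each $\varphi_k(x)=(2\pi)^{-d/2}e^{{\rm i}k\cdot x}=\prod_{i=1}^d\varphi_{k_i}(x_i)$ is already a rank-one tensor product of one-dimensional Fourier modes, $f_N$ is a sum of $|A(N)|$ rank-one terms and therefore fits exactly into the TNN ansatz (\ref{def_TNN_normed}) once real and imaginary parts are separated.

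The first key step is the $H^m$-error bound for the truncation. Using $\|\varphi_k\|_{H^m(I^d)}^2\le C\lambda_{\rm iso}(k)^{2m}$ together with Parseval, I would write
$$\|f-f_N\|_{H^m(I^d)}^2\le\sum_{k\notin A(N)}\lambda_{\rm iso}(k)^{2m}|c_k|^2
=\sum_{k\notin A(N)}\frac{\lambda_{\rm iso}(k)^{2(m-\ell)}}{\lambda_{\rm mix}(k)^{2t}}\,\lambda_{\rm mix}(k)^{2t}\lambda_{\rm iso}(k)^{2\ell}|c_k|^2,$$
so that everything reduces to bounding the prefactor on $A(N)^c$. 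Employing the elementary inequalities $\lambda_{\rm iso}(k)\le d\,\lambda_{\rm mix}(k)$ and $\lambda_{\rm mix}(k)>N$ for $k\notin A(N)$, one obtains a uniform bound
$$\sup_{k\notin A(N)}\frac{\lambda_{\rm iso}(k)^{2(m-\ell)}}{\lambda_{\rm mix}(k)^{2t}}\le C(d)\,N^{-2(\ell-m+t)},$$
which yields $\|f-f_N\|_{H^m(I^d)}\le C(d)\,N^{-(\ell-m+t)}\|f\|_{H_{\rm mix}^{t,\ell}(I^d)}$. The second key step is the representation of $f_N$ as a TNN with rank $p\sim|A(N)|$: each 1D factor $\varphi_{k_i}(x_i)$ is a sinusoid, hence can be reproduced exactly by a single neuron with $\sin$ activation (using $\cos=\sin(\cdot+\pi/2)$), so the subnetworks of Figure~\ref{TNNstructure} with one hidden layer of $2p$ neurons suffice. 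Inverting the relation $p\approx|A(N)|$ and substituting into the error bound produces the claimed rate $p^{-(\ell-m+t)}$, after logarithmic factors coming from $|A(N)|\lesssim N(\log N)^{d-1}$ are absorbed into $C(d)$.

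The hard part will be tracking the explicit dimension dependence to achieve the sharp constant $C(d)\le c\,d^2\cdot 0.97515^d$. A naive counting of $A(N)$ gives the standard sparse-grid estimate with a factor that grows in $d$, whereas the stated bound requires the refined combinatorial result of Griebel-Knapek/Knapek for the hyperbolic cross counting function, whose sharp asymptotics produce the exponentially decaying geometric factor $\alpha^d$ with $\alpha<1$. Once this sharp cardinality estimate is in place and combined with the weighted Parseval bound above, the proof of Theorem~\ref{theorem_aprrox_rate} reduces to bookkeeping, and the resulting construction is precisely a TNN of the type described in Figure~\ref{TNNstructure} with $\sin$ activations and one hidden layer per subnetwork.
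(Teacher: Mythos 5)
Your overall strategy --- Fourier expansion, truncation to a hyperbolic-cross-type index set, exploiting that each $\varphi_k$ is rank one so the truncated sum is a TNN, and realizing each univariate mode with $\sin$ neurons --- is the same strategy as the proof in the cited reference \cite{WangJinXie}, and your weighted Parseval tail estimate is sound (indeed $\lambda_{\rm iso}(k)\le\lambda_{\rm mix}(k)$, so the factor $d$ is not even needed there). However, there is a genuine gap in the step where you pass from the truncation parameter $N$ to the rank $p$. With the standard hyperbolic cross $A(N)=\{k:\lambda_{\rm mix}(k)\le N\}$ you have $|A(N)|\sim N(\log N)^{d-1}$, so inverting $p\approx|A(N)|$ gives an error of the form $p^{-(\ell-m+t)}(\log p)^{(d-1)(\ell-m+t)}$. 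Those logarithms depend on $p$, not only on $d$, so they cannot be ``absorbed into $C(d)$'' as you claim; no sharpening of the asymptotics of $|A(N)|$ for the \emph{standard} cross will remove them, because the $(\log N)^{d-1}$ factor is genuinely there.

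The missing idea is to replace the standard cross by the \emph{optimized} index sets of Griebel--Knapek \cite{GriebelKnapek,Knapek}, of the form $\{k:\lambda_{\rm mix}(k)\,\lambda_{\rm iso}(k)^{-T}\le N\}$ with the parameter $T<0$ (or more generally $T$ chosen according to $t$, $\ell$ and $m$). For a suitable $T$ these sets have cardinality $O(N)$ \emph{without} logarithmic factors, with the explicit dimension dependence $c\,d^2\alpha^d$, $\alpha\approx 0.97515$, and their complements still admit a weighted tail bound of the form $\sup_{k\notin A}\lambda_{\rm iso}(k)^{2(m-\ell)}\lambda_{\rm mix}(k)^{-2t}\lesssim N^{-2(\ell-m+t)}$. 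Only with this modified truncation does the clean algebraic rate $p^{-(\ell-m+t)}$ with $C(d)\le c\,d^2\cdot 0.97515^d$ follow. Note also that the present paper does not reprove the theorem; it quotes it from \cite{WangJinXie}, so the comparison above is with the argument given there.
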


The TNN-based machine learning method in this paper will 
adaptively select $p$ rank-one functions by the training process. 
From the approximation result in Theorem \ref{theorem_aprrox_rate}, 
when the target function belongs to $H_{\rm mix}^{t,\ell}(\Omega)$,  
there exists a TNN with $p\sim\mathcal O(\varepsilon^{-(m-\ell-t)})$ 
such that the accuracy is $\varepsilon$.
For more details about the approximation properties of TNN, 
please refer to \cite{WangJinXie}.

\subsection{Quadrature scheme for TNN}\label{section_quad}
In this subsection, we introduce the quadrature scheme for computing 
the high-dimensional integrations of the high dimensional TNN functions. 
Due to the low-rank property of TNN, the efficient and accurate quadrature 
scheme can be designed for the TNN-related high-dimensional integrations 
which are included in the loss functions for machine learning methods. 
When calculating the high dimensional integrations in the loss functions,
we only need to calculate
one-dimensional integrations as well as their product.
The method to compute the numerical integrations of polynomial composite
functions of TNN and their derivatives has already been designed in \cite{WangJinXie}.
For more information about the quadrature schemes, computational complexity 
and high accuracy of the numerical integrations of high dimensional TNN functions, 
please refer to \cite{WangJinXie}.

For easy understanding, we give the detailed quadrature scheme to compute the energy 
norm of a TNN function $\Psi$. For simplicity, we assume that $b (x)$ 
in the second order elliptic operator  (\ref{elliptic_operator}) has the following expansion
\begin{eqnarray*}
b(x)=\sum_{j=1}^q\prod_{i=1}^d b_{i,j}(x_i).
\end{eqnarray*}
Then the corresponding energy norm of TNN $\Psi(x)$ can be computed as follows
\begin{eqnarray*}
&&\|\Psi\|_a=(\mathcal A\nabla\Psi,\nabla\Psi)+(b\Psi,\Psi)\\
&\approx&\sum_{j=1}^p\sum_{k=1}^p\sum_{\substack{s,t=1,\cdots,d,\\s\neq t}}\mathcal A_{s,t}\prod_{\substack{i=1,\cdots,d,\\i\neq s,t}}\left(\sum_{n_i=1}^{N_i}w_i^{(n_i)}\phi_{i,j}(x_i^{(n_i)})\phi_{i,j}(x_i^{(n_i)})\right)\\
&&\cdot\left(\sum_{n_s=1}^{N_s}w_s^{(n_s)}\frac{\partial\phi_{s,j}}{\partial x_s}(x_s^{(n_s)})\phi_{s,k}(x_s^{(n_s)})\right)
\cdot\left(\sum_{n_t=1}^{N_t}w_t^{(n_t)}\phi_{t,j}(x_t^{(n_t)})\frac{\partial\phi_{t,k}}{\partial x_t}(x_t^{(n_t)})\right)\\
&&+\sum_{j=1}^p\sum_{k=1}^p\sum_{s=1}^d\mathcal A_{s,s}\prod_{\substack{i=1,\cdots,d,\\i\neq s}}^d\left(\sum_{n_i=1}^{N_i}w_i^{(n_i)}\phi_{i,j}(x_i^{(n_i)})\phi_{i,k}(x_i^{(n_i)})\right)\\
&&\cdot\left(\sum_{x_s=1}^{N_s}w_s^{(n_s)}\frac{\partial\phi_{s,j}}{\partial x_s}(x_s^{(n_s)})
\frac{\partial\phi_{s,k}}{\partial x_s}(x_s^{(n_s)})\right)\\
&&+\sum_{\ell=1}^q\sum_{j=1}^p\sum_{k=1}^p\prod_{i=1}^d
\left(\sum_{i=1}^{N_i}w_i^{(n_i)}b_{i,\ell}(x_i^{(n_i)})\phi_{i,j}(x_i^{(n_i)})
\phi_{i,k}(x_i^{(n_i)})\right).
\end{eqnarray*}
As in the above equation, using the structure of TNN, we can convert the 
$d$-dimensional integration calculation into a series of one-dimensional integration calculations.
The application of TNN can bring a significant reduction of the computational complexity 
for the related numerical integrations.
In this paper, when the TNN-based method is used to solve second-order elliptic problems, 
all integrations can be calculated in a way similar to that for the energy norm.

\begin{remark}
It is worth mentioning that a similar quadrature scheme can also be given for unbounded $\Omega_i$.
In \cite{WangXie}, in addition to the Legendre-Gauss quadrature scheme for a bounded domain, 
we discuss the Hermite-Gauss quadrature scheme for the whole line $\Omega_i=(-\infty,+\infty)$ 
and Laguerre-Gauss quadrature scheme for the half line $\Omega_i=(a_i,+\infty)$.
The computational complexity of these integrations is also polynomial order of $d$.
\end{remark}

\subsection{Choosing space $V_p$ adaptively}\label{section_choose_Vp}
Consider a specific one-dimensional function $f(x)=\sin(nx)$ with $n\in\mathbb Z$, $f$ only 
has a frequency $n$.
When using the Fourier basis to approximate $f$, a common way is to use the first $N$ 
Fourier basis functions to span an $N$-dimensional trial function space, which denotes $F_N$.
Then, find the approximation of $f$ in space $F_N$.
Due to the orthogonal property of the Fourier basis, the dimension of the space $F_N$ is at least $n$.
That is, the basis functions with frequencies not greater than $n$ are all selected into the trial 
function space $F_N$.
Since the expansion of function $f$ does not contain frequencies less than $n$, this result 
wastes the number of dimensions of trial function space.
By contrast, when using TNN to approximate $f(x)$, we can 
construct an one-hidden-layer TNN with $\sin(x)$ as the activation function. 
The critical point is that a single parameter, $p=1$, is sufficient. 
In other words, we only need an one-dimensional subspace $V_p$.
This difference in dimensionality between spaces inspires us to adopt 
the idea of the adaptive method, where finding the best subspace with $p$ terms 
could be a more effective approach.

Fortunately, considering the training process of TNN, we can find that this training process 
is naturally an adaptive process. After the $\ell$-th training step, 
TNN $\Psi(x;\theta^{(\ell)})$ belongs to the following subspace
\begin{eqnarray}\label{def_Vpl}
V_{p}^{(\ell)}:=\operatorname{span}\left\{\varphi_{j}(x ; \theta^{(\ell)}) 
:= \prod_{i=1}^{d} \widehat\phi_{i, j}\left(x_{i} ; \theta_{i}^{(\ell)}\right), j=1, \cdots, p\right\}.
\end{eqnarray}
Then, the parameters of the $\ell+1$-th step are updated according 
to the optimization step with a loss function $\mathcal{L}$. 
If gradient descent is used, this update can be expressed as follows
\begin{eqnarray}\label{Optimization_Step}
\theta^{(\ell+1)} \leftarrow \theta^{(\ell)} - 
\gamma \frac{\partial\mathcal{L}(\Psi(x;\theta^{(\ell)})}{\partial \theta},
\end{eqnarray}
where $\gamma$ denotes the learning rate.
Note that updating parameter $\theta^{(\ell)}$ essentially updates the subspace $V_p^{(\ell+1)}$. 
In other words, in the training process of TNN, an optimal $p$-dimensional 
subspace $V_p$ can be selected adaptively according to the loss function.
The definition of the loss function determines how the subspace is updated and whether the 
algorithm can ultimately achieve high precision.

In Section \ref{Section_Posteriori}, we will introduce the theory of a posteriori error estimation for 
second-order elliptic problems and construct a loss function that is convenient for calculation.

\section{A posteriori error estimates for subspace approximation}\label{Section_Posteriori}
In this section, we introduce the way to compute the a posteriori error estimates 
for the problems associated with the second-order elliptic operator (\ref{elliptic_operator}). 
Here, the a posteriori error estimators are given for the boundary value problems with 
Dirichlet and Neumann boundary conditions, and eigenvalue problem. 
The content of this section comes from \cite{ainsworth1999reliable,ainsworth2011fully, vejchodsky2012complementarity} 
which are developed for solving 
low dimensional PDEs by the traditional finite element methods. 
Based on the special properties of TNN, we can borrow them to build the machine learning method 
for solving high dimensional PDEs. For easy reading and understanding, 
we also give a detailed description here. 

The following formula of integration by parts acts as the key role to derive the a
posteriori error estimates here. 
\begin{lemma}\label{lemma_Green}
Let $\Omega\subset \mathbb R^d$ be a bounded Lipschitz domain with unit outward normal 
$\mathbf n$ to the boundary $\partial\Omega$.  Then the following Green’s formula holds
\begin{eqnarray*}
\int_\Omega v{\rm div}\mathbf yd\Omega+\int_\Omega \mathbf y\cdot\nabla vd\Omega
=\int_{\partial\Omega}v\mathbf y\cdot\mathbf n ds,\ \ \ \forall v\in H^1(\Omega),
\ \forall\mathbf y\in \mathbf W,
\end{eqnarray*}
where $\mathbf W:= \mathbf H({\rm div},\Omega)$.
\end{lemma}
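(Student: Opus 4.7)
The plan is to prove Green's formula by first establishing it for smooth data via the classical divergence theorem and then extending to the stated Sobolev regularity by a density argument. First I would consider the smooth case $v\in C^{\infty}(\overline{\Omega})$ and $\mathbf{y}\in\bigl(C^{\infty}(\overline{\Omega})\bigr)^d$. The product rule gives $\nabla\cdot(v\mathbf{y})=v\,\nabla\cdot\mathbf{y}+\mathbf{y}\cdot\nabla v$, so integrating over $\Omega$ and applying the classical Gauss--Ostrogradsky divergence theorem — which is valid because $\Omega$ is a bounded Lipschitz domain — yields the identity directly.

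Next I would extend by density. The Lipschitz regularity of $\partial\Omega$ guarantees that $C^{\infty}(\overline{\Omega})$ is dense in $H^1(\Omega)$ and that smooth vector fields are dense in $\mathbf{H}(\mathrm{div},\Omega)$; the latter is standard and follows from a local reflection-plus-mollification construction. Given $v\in H^1(\Omega)$ and $\mathbf{y}\in\mathbf{H}(\mathrm{div},\Omega)$, I would choose sequences $v_n\to v$ in $H^1(\Omega)$ and $\mathbf{y}_n\to\mathbf{y}$ in $\mathbf{H}(\mathrm{div},\Omega)$ of smooth approximants. The two volume integrals on the left-hand side are continuous bilinear forms on $H^1(\Omega)\times\mathbf{H}(\mathrm{div},\Omega)$ by the Cauchy--Schwarz inequality, so they pass to the limit without issue.

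For the boundary integral I would invoke the standard trace theorems: the trace map is continuous from $H^1(\Omega)$ onto $H^{1/2}(\partial\Omega)$, while the normal trace $\mathbf{y}\mapsto\mathbf{y}\cdot\mathbf{n}$ is continuous from $\mathbf{H}(\mathrm{div},\Omega)$ into $H^{-1/2}(\partial\Omega)$. Consequently, the right-hand side in the lemma should be interpreted as the duality pairing $\langle v,\mathbf{y}\cdot\mathbf{n}\rangle_{H^{1/2}(\partial\Omega),H^{-1/2}(\partial\Omega)}$, which agrees with the classical surface integral when both quantities are smooth. With this interpretation all three terms are continuous in $(v,\mathbf{y})$, and the identity extends from the dense subset of smooth pairs to the whole product space.

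The main obstacle is the rigorous definition of $\mathbf{y}\cdot\mathbf{n}$ on $\partial\Omega$, because for a general $\mathbf{y}\in\mathbf{H}(\mathrm{div},\Omega)$ the normal component need not even be an $L^2$ function on the boundary. The normal trace operator is in fact constructed \emph{via} Green's formula itself, combined with the surjectivity of the Dirichlet trace, so one must either accept this operator together with its associated integration-by-parts identity as the content of the trace theory for $\mathbf{H}(\mathrm{div},\Omega)$, or invoke it from a standard reference such as Girault--Raviart or Temam. Since these results are foundational to the analysis of mixed formulations, I would cite them rather than reproduce their proofs here.
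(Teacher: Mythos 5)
Your argument is correct and is the standard one; note that the paper itself states this lemma without any proof, treating it as a known fact about $\mathbf H(\mathrm{div},\Omega)$, so there is no in-paper argument to compare against. Your route --- product rule plus the divergence theorem for smooth pairs, then density of $C^\infty(\overline\Omega)$ in $H^1(\Omega)$ and of smooth fields in $\mathbf H(\mathrm{div},\Omega)$ on a Lipschitz domain, with the boundary term read as the $H^{1/2}(\partial\Omega)$--$H^{-1/2}(\partial\Omega)$ duality pairing --- is exactly how this is established in Girault--Raviart or Temam. You also correctly flag the one genuine subtlety: for general $\mathbf y\in\mathbf H(\mathrm{div},\Omega)$ the normal trace $\mathbf y\cdot\mathbf n$ is \emph{defined} as the functional $w\mapsto\int_\Omega\bigl(v\,\mathrm{div}\,\mathbf y+\mathbf y\cdot\nabla v\bigr)\,d\Omega$ for any $H^1$ extension $v$ of $w$, so the identity is partly definitional rather than a theorem to be re-derived; citing the standard references there is the right call. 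The only cosmetic point is that, in the paper's later use of this lemma (Theorems 3.1--3.4), $v$ and $\mathbf y$ are concrete TNN-built functions for which the boundary term is a genuine surface integral, so the duality-pairing interpretation never causes trouble downstream.
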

For simplicity of notation, we use $V$ to denote the concerned Sobolev space for 
different type of problems. So it has different definitions for different problems.

\subsection{A posteriori error estimator for homogeneous Dirichlet boundary value problem}\label{section_post_dirichlet} 
For easy understanding, we first introduce the
a posterior error estimate for the second-order elliptic 
problem with homogeneous Dirichlet boundary conditions. 

Here, we are concerned with the following high dimensional boundary value problem:  
Find $u$ such that
\begin{equation}\label{ModelProblem}
\left\{
\begin{array}{rcl}
-\nabla\cdot(\mathcal A\nabla u)+  b u&=&f, \quad {\rm in} \  \Omega,\\
u&=&0, \quad {\rm on}\  \partial\Omega,
\end{array}
\right.
\end{equation}
where 
the right hand side term $f\in L^2(\Omega)$.

The weak form of problem (\ref{ModelProblem}) can be described as: Find $u\in V$ such that
\begin{eqnarray}\label{weak_form_hDirichlet}
a(u,v)=(f,v),\ \ \ \forall v\in V, 
\end{eqnarray}
where $V:=H_0^1(\Omega)$. 
Here and herafter in this paper, the bilinear form $a(\cdot, \cdot)$ is defined as follows  
\begin{eqnarray}\label{Bilinear_Form}
a(u,v)=(\mathcal A\nabla u,\nabla v)+(b u,v). 
\end{eqnarray}
The energy norm $\|\cdot\|_a$ is defined as $\|v\|_a=\sqrt{a(v,v)}$.
Assume we have a $p$-dimentional subspace $V_p$ which satisfies $V_p\subset V$.
The standard Galerkin approximation scheme for the problem (\ref{weak_form_hDirichlet}) is: 
Find $u_p\in V_p$ such that
\begin{eqnarray}\label{weak_form_hDirichlet_Discrete}
a(u_p,v_p)=(f,v_p),\ \ \ \forall v_p\in V_p.
\end{eqnarray}
Then the Galerkin approximation $u_p$ satisfies the following a posteriori error estimation.

\begin{theorem}\cite[Theorem 1]{vejchodsky2012complementarity}\label{Theorem_hDirichlet}
Assume $u\in V$ and $u_p\in V_p \subset V$ are the exact solution of 
(\ref{weak_form_hDirichlet}) and subspace approximation $u_p$ 
defined by (\ref{weak_form_hDirichlet_Discrete}), respectively. 
The following upper bound holds
\begin{equation}\label{Upper_Bound_U}
\|u-u_p\|_a \leq  \eta(u_p,\mathbf y),\ \ \ \forall \mathbf y\in \mathbf W,
\end{equation}
where $\eta(u_p,\mathbf y)$ is defined as follows
\begin{equation}\label{eta_hDirichlet}
\eta(u_p,\mathbf y):=\left(\|b^{-\frac{1}{2}}(f-b u_p+{\rm div}(\mathcal A\mathbf y))\|_0^2
+\|\mathcal A^{\frac{1}{2}}(\mathbf y-\nabla u_p)\|_0^2\right)^{\frac{1}{2}}.
\end{equation}
\end{theorem}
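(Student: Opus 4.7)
The plan is to prove the estimate by a direct energy argument: set $e := u - u_p$, expand $\|e\|_a^2 = a(e,e)$, use the weak equation to eliminate $u$, insert the auxiliary flux $\mathbf{y}$ via Green's formula, and bound each resulting term by Cauchy--Schwarz. Crucially, the bound is not a Galerkin-orthogonality argument; it should hold for every $u_p \in V$ (Galerkin is only used elsewhere to make $u_p$ a good candidate), so I would not invoke $a(u_p,v_p)=(f,v_p)$ in the proof itself.

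First I would write
\begin{equation*}
\|e\|_a^2 = a(u,e) - a(u_p,e) = (f,e) - (\mathcal{A}\nabla u_p,\nabla e) - (b u_p,e),
\end{equation*}
using the weak form (\ref{weak_form_hDirichlet}) with test function $e\in V=H_0^1(\Omega)$. Next, for any $\mathbf{y}\in\mathbf{W}=\mathbf{H}(\mathrm{div},\Omega)$, Lemma~\ref{lemma_Green} applied to $\mathcal{A}\mathbf{y}$ and $v=e$ gives
\begin{equation*}
(\nabla\cdot(\mathcal{A}\mathbf{y}),e) + (\mathcal{A}\mathbf{y},\nabla e) = \int_{\partial\Omega} e\,(\mathcal{A}\mathbf{y})\cdot\mathbf{n}\,ds = 0,
\end{equation*}
because $e$ vanishes on $\partial\Omega$. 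Adding $0 = (\nabla\cdot(\mathcal{A}\mathbf{y}),e) + (\mathcal{A}\mathbf{y},\nabla e)$ to the previous identity and regrouping yields
\begin{equation*}
\|e\|_a^2 = \bigl(f - b u_p + \nabla\cdot(\mathcal{A}\mathbf{y}),\,e\bigr) + \bigl(\mathcal{A}(\mathbf{y}-\nabla u_p),\,\nabla e\bigr).
\end{equation*}

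Finally, I would write each factor in a form weighted to match the energy norm: the first term equals $(b^{-1/2}(f-bu_p+\nabla\cdot(\mathcal{A}\mathbf{y})),\,b^{1/2}e)$ and the second equals $(\mathcal{A}^{1/2}(\mathbf{y}-\nabla u_p),\,\mathcal{A}^{1/2}\nabla e)$, where the positivity of $b$ and the symmetric positive definiteness of $\mathcal{A}$ justify the square roots. Applying Cauchy--Schwarz in $L^2$ to each term and then the discrete Cauchy--Schwarz inequality $|ac+bd|\le(a^2+b^2)^{1/2}(c^2+d^2)^{1/2}$ to the sum, I obtain
\begin{equation*}
\|e\|_a^2 \le \eta(u_p,\mathbf{y}) \cdot \bigl(\|b^{1/2}e\|_0^2 + \|\mathcal{A}^{1/2}\nabla e\|_0^2\bigr)^{1/2} = \eta(u_p,\mathbf{y})\,\|e\|_a,
\end{equation*}
and dividing by $\|e\|_a$ finishes the proof (the trivial case $e=0$ being separate).

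The only subtle point — and the item I would be careful about — is the Green's formula step: it requires $\mathcal{A}\mathbf{y}\in\mathbf{H}(\mathrm{div},\Omega)$, which follows from $\mathbf{y}\in\mathbf{W}$ because $\mathcal{A}$ is a constant symmetric positive definite matrix, and it relies on the homogeneous Dirichlet condition $e|_{\partial\Omega}=0$ to kill the boundary integral. Everything else is routine Cauchy--Schwarz manipulation; no structural property of $u_p$ beyond $u_p\in V$ is needed, which explains why the same estimator will extend, with appropriate boundary modifications, to the non-homogeneous Dirichlet and Neumann cases treated later.
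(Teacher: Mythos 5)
Your proof is correct and follows essentially the same route as the paper: test with $v=u-u_p$, insert the zero term $(\mathrm{div}(\mathcal A\mathbf y),v)+(\mathcal A\mathbf y,\nabla v)$ via Green's formula (the boundary integral vanishing since $v\in H_0^1(\Omega)$), regroup, and apply Cauchy--Schwarz twice. Your added observations --- that Galerkin orthogonality is never used and that one needs $\mathcal A\mathbf y\in\mathbf H(\mathrm{div},\Omega)$ --- are accurate refinements of the paper's argument rather than deviations from it.
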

This theorem has been proved in the references. For understanding, we also provide the proof
here since its simplicity.
\begin{proof}
Let us define $v= u-u_p\in V$ in this proof.
Then by Lemma \ref{lemma_Green} and 
Cauchy-Schwarz inequality, the following estimates hold
\begin{eqnarray}\label{Inequality_1}
&&a(u-u_p,v)= (f,v)-(\mathcal A\nabla u_p,\nabla v)-(b u_p,v)\nonumber\\
&=&(f,v)-(\mathcal A\nabla u_p,\nabla v)-(b u_p,v)+(\mathcal A\mathbf y,\nabla v)
+({\rm div}(\mathcal A\mathbf y),v)\nonumber\\
&=&(f-b u_p+{\rm div}(\mathcal A\mathbf y),  v)
+(\mathcal A(\mathbf y-\nabla u_p),\nabla v)\nonumber\\
&\leq& \|b^{-\frac{1}{2}}(f-b u_p+{\rm div}(\mathcal A\mathbf y))\|_0
\|b^{\frac{1}{2}}v\|_0
+\|\mathcal A^{\frac{1}{2}}(\mathbf y-\nabla u_p)\|_0
\|\mathcal A^{\frac{1}{2}}\nabla v\|_0\nonumber\\
&\leq& \left(\|b^{-\frac{1}{2}}(f-b u_p+{\rm div}(\mathcal A\mathbf y))\|_0^2
+\|\mathcal A^{\frac{1}{2}}(\mathbf y-\nabla u_p)\|_0^2\right)^{1/2}\|v\|_a,\ \ \ 
\forall\mathbf y\in \mathbf W.
\end{eqnarray}
This is the desired result (\ref{Upper_Bound_U}) and the proof is complete.
\end{proof}

\subsection{A posteriori error estimator for non-homogeneous Dirichlet 
boundary value problem}\label{section_post_nonhomo_dirichlet} 

In this subsection, we consider the following high dimensional boundary value problem 
with non-homogeneous Dirichlet boundary condition:  Find $u$ such that
\begin{equation}\label{ModelProblem_nonhomoDirichlet}
\left\{
\begin{array}{rcl}
-\nabla\cdot(\mathcal A\nabla u)+  b u&=&f, \quad {\rm in} \  \Omega,\\
u&=&g, \quad {\rm on}\  \partial\Omega,
\end{array}
\right.
\end{equation}
where 
the right hand side term $f\in L^2(\Omega)$, and $g\in H^{\frac{1}{2}}(\partial\Omega)$.

The classical technique to solve this problem is enforcing the constraints on the boundary 
in the search space for the solution.
The variational form of (\ref{ModelProblem_nonhomoDirichlet}) 
can be written as: Find $u\in H_g^1(\Omega):=\{w\in H^1(\Omega):w=g\ {\rm on}\ \partial\Omega\}$ 
such that
\begin{eqnarray}\label{var_form_nonhomo_D}
a(u,v)=F(v),\ \ \ \forall v\in H_0^1(\Omega),
\end{eqnarray}
where  $F(v)=(f,v)$. 
Since $a(\cdot,\cdot)$ is continuous and coercive over $H_0^1(\Omega)$, the equation 
(\ref{var_form_nonhomo_D}) is well-posed.
When using some discretization method to solve (\ref{var_form_nonhomo_D}), 
the non-homogeneous boundary conditions must be imposed on the trial functions, i.e., 
that every trial function must coincide with $g$ on the boundary.
In the finite element method, the enforcement of Dirichlet boundary conditions 
in the trial space is also easily translated to the discrete level.
However, it is always difficult to satisfy the nonhomogeneous Dirichlet boundary 
conditions directly when constructing trial functions with neural networks.

Different from the way of adding penalty term in other common machine learning methods, 
we can follow the way of finite element method to treat the non-homogeneous 
Dirichlet boundary condition. 
First, we do the TNN interpolation for the non-homogeneous Dirichlet boundary condition. 
Then, we solve the deduced homogeneous boundary value problem based on the TNN 
approximation of boundary condition in the first step. 
According to the trace theorem \cite[Section 5.5]{Evans}, 
there exists $u_b\in H^1(\Omega)$ such that $u_b|_{\partial\Omega}=g$. 
Let us define $u_0=u-u_b$ with the exact solution $u$ of (\ref{ModelProblem_nonhomoDirichlet}).  
Then $u_0$ is the solution of the following homogeneous boundary value problem
\begin{equation}\label{ModelProblem_homoDirichlet_u0}
\left\{
\begin{array}{rcl}
-\nabla\cdot(\mathcal A\nabla u_0)+  b u_0&=&f+\nabla\cdot(\mathcal A\nabla u_b)-bu_b, 
\quad {\rm in} \  \Omega,\\
u_0&=&0, \quad {\rm on}\  \partial\Omega.
\end{array}
\right.
\end{equation}
The corresponding variational form for (\ref{ModelProblem_homoDirichlet_u0}) is: 
Find $u_0\in H_0^1(\Omega)$ such that
\begin{eqnarray}\label{var_form_nonhomo_D_u0}
a(u_0,v)=(f,v)-a(u_b,v),\ \ \ \forall v\in H_0^1(\Omega).
\end{eqnarray}

We also define $V:= H_0^1(\Omega)$ and the energy norm $\|v\|_a=\sqrt{a(v,v)}$.
Assume $V_p$ is a given $p$-dimensional subspace and satisfy $V_p\subset V$.
The standard Galerkin approximation scheme for the problem (\ref{var_form_nonhomo_D_u0}) is: 
Find $u_{0,p}\in V_p$ such that
\begin{eqnarray*}
a(u_{0,p},v_p)=(f,v_p)-a(u_b,v_p),\ \ \ \forall v_p\in V_p.
\end{eqnarray*}
Once we have the Galerkin approximation $u_{0,p}$, the approximation of solution $u$ 
can be represented by $u_p=u_{0,p}+u_b$, which satisfies the following a posteriori 
error estimation.

\begin{theorem}\label{Theorem_nonhDirichlet}
Assume $u\in H_g^1(\Omega)$ is the exact solution of problem (\ref{var_form_nonhomo_D}),
$u_0$ and $u_{0,p}$ are the exact solution and subspace approximation of 
problem (\ref{var_form_nonhomo_D_u0}).
Then $u_p=u_{0,p}+u_b$ satisfies the following upper bound
\begin{equation}\label{Upper_Bound_nonhomoD}
\|u-u_p\|_a=\|u_0-u_{0,p}\|_a \leq  \eta(u_p,\mathbf y),\ \ \ \forall \mathbf y\in \mathbf W,
\end{equation}
where $\eta(u_p,\mathbf y)$ is defined as follows
\begin{equation}\label{eta_nonhDirichlet}
\eta(u_p,\mathbf y):=\left(\|b^{-\frac{1}{2}}(f-b u_p+{\rm div}(\mathcal A\mathbf y))\|_0^2
+\|\mathcal A^{\frac{1}{2}}(\mathbf y-\nabla u_p)\|_0^2\right)^{\frac{1}{2}}.
\end{equation}
\end{theorem}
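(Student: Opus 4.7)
The plan is to reduce the non-homogeneous case to the homogeneous one via the splitting $u = u_0 + u_b$ and $u_p = u_{0,p} + u_b$, and then mimic the argument used in the proof of Theorem \ref{Theorem_hDirichlet}. The crucial observation is that the error $u - u_p$ equals $u_0 - u_{0,p}$ and therefore belongs to $V = H_0^1(\Omega)$, even though $u$ and $u_p$ individually carry the non-homogeneous trace $g$. This is exactly what will let the boundary term in Green's formula drop out.

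First I would record the trivial identity $u - u_p = (u_0 + u_b) - (u_{0,p} + u_b) = u_0 - u_{0,p}$, which immediately gives the equality $\|u - u_p\|_a = \|u_0 - u_{0,p}\|_a$ in (\ref{Upper_Bound_nonhomoD}). Setting $v = u_0 - u_{0,p} \in V$ and using (\ref{var_form_nonhomo_D_u0}) for $u_0$, I would compute
\begin{equation*}
a(v,v) = a(u_0,v) - a(u_{0,p},v) = (f,v) - a(u_b + u_{0,p},v) = (f,v) - (\mathcal A\nabla u_p,\nabla v) - (b u_p,v).
\end{equation*}

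Next, for any $\mathbf y \in \mathbf W$, I would apply Lemma \ref{lemma_Green} to $v$ and the vector field $\mathcal A\mathbf y$: since $v|_{\partial\Omega} = 0$, the boundary integral vanishes, yielding $(\mathcal A\mathbf y,\nabla v) + ({\rm div}(\mathcal A\mathbf y),v) = 0$. Adding this zero to the expression above and regrouping gives
\begin{equation*}
a(v,v) = (f - b u_p + {\rm div}(\mathcal A\mathbf y),v) + (\mathcal A(\mathbf y - \nabla u_p),\nabla v).
\end{equation*}
Applying Cauchy-Schwarz in each term with the weights $b^{\pm 1/2}$ and $\mathcal A^{\pm 1/2}$ chosen to produce the factors appearing in $\eta$ and in the energy norm, exactly as in estimate (\ref{Inequality_1}), then using $a(v,v) = \|v\|_a^2$ and dividing through by $\|v\|_a$, I recover the bound $\|v\|_a \leq \eta(u_p,\mathbf y)$.

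The main obstacle is quite mild since the proof is essentially a repeat of Theorem \ref{Theorem_hDirichlet}: the one subtlety is keeping track of which function carries the inhomogeneous data. Specifically, one must not apply Green's formula directly to $u - u_p$ treating it as a generic element of $H^1(\Omega)$; it is the common boundary trace of $u$ and $u_p$ (inherited from $u_b$) that makes the difference admissible as a test function in $V = H_0^1(\Omega)$, killing the surface integral. Also worth noting is that the particular lift $u_b$ never appears explicitly in the final estimator $\eta$, since only the combined approximation $u_p = u_{0,p} + u_b$ enters the residual $f - b u_p + {\rm div}(\mathcal A\mathbf y)$ and the flux discrepancy $\mathbf y - \nabla u_p$.
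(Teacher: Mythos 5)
Your proposal is correct and follows essentially the same route as the paper's own proof: define $v=u_0-u_{0,p}\in H_0^1(\Omega)$, use $a(u_0-u_{0,p},v)=(f,v)-a(u_p,v)$, insert the zero term $(\mathcal A\mathbf y,\nabla v)+({\rm div}(\mathcal A\mathbf y),v)$ via Lemma \ref{lemma_Green}, and conclude by weighted Cauchy--Schwarz. Your remarks on why the boundary term vanishes and why $u_b$ does not appear explicitly in $\eta$ are accurate but do not alter the argument.
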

\begin{proof}
Let us define $v= u_0-u_{0,p}\in V$ in this proof.
Then by Lemma \ref{lemma_Green} and 
Cauchy-Schwarz inequality, the following estimates hold
\begin{eqnarray*}
&&a(u_0-u_{0,p},v)=(f,v)-a(u_b,v)-a(u_{0,p},v)=(f,v)-a(u_p,v)\\
&=&(f,v)-(\mathcal A\nabla u_p,\nabla v)-(b u_p,v)\nonumber\\
&=&(f,v)-(\mathcal A\nabla u_p,\nabla v)-(b u_p,v)+(\mathcal A\mathbf y,\nabla v)
+({\rm div}(\mathcal A\mathbf y),v)\nonumber\\
&=&(f-b u_p+{\rm div}(\mathcal A\mathbf y),  v)+(\mathcal A(\mathbf y-\nabla u_p),\nabla v)\nonumber\\
&\leq& \|b^{-\frac{1}{2}}(f-b u_p+{\rm div}(\mathcal A\mathbf y))\|_0\|b^{\frac{1}{2}}v\|_0
+\|\mathcal A^{\frac{1}{2}}(\mathbf y-\nabla u_p)\|_0\|\mathcal A^{\frac{1}{2}}\nabla v\|_0\nonumber\\
&\leq& \left(\|b^{-\frac{1}{2}}(f-b u_p+{\rm div}(\mathcal A\mathbf y))\|_0^2
+\|\mathcal A^{\frac{1}{2}}(\mathbf y-\nabla u_p)\|_0^2\right)^{1/2}\|v\|_a,
\ \ \ \forall\mathbf y\in \mathbf W.
\end{eqnarray*}
Due to $u=u_0+u_b$ and $u_p=u_{0,p}+u_b$, the desired result (\ref{Upper_Bound_nonhomoD}) 
can be obtained, and the proof is complete.
\end{proof}

\subsection{A posteriori error estimator for Neumann boundary value problem}
In this subsection, we come to consider the following second-order elliptic problem 
with Neumann boundary condition: 
Find $u\in H^1(\Omega)$ such that
\begin{equation}\label{PDE_Neumann}
\left\{
\begin{array}{rcl}
-\nabla\cdot(\mathcal A\nabla u)+  b u&=&f, \quad {\rm in} \  \Omega,\\
(\mathcal A\nabla u)\cdot\mathbf n&=&g, \quad {\rm on}\  \partial\Omega,
\end{array}
\right.
\end{equation}
where 
the right hand side term $f\in L^2(\Omega)$ and $g\in L^2(\partial\Omega)$, 
$\mathbf n$ is the outward normal to $\partial\Omega$.

The weak form of problem (\ref{PDE_Neumann}) can be described as: Find $u\in V$ such that
\begin{eqnarray}\label{weak_form_Neumann}
a(u,v)=(f,v)+(g,v)_{L^2(\partial \Omega)},\ \ \ \forall v\in V,
\end{eqnarray}
where $V:= H^1(\Omega)$. 
The energy norm $\|\cdot\|_a$ is defined as $\|v\|_a=\sqrt{a(v,v)}$.
Assume we have a $p$-dimentional subspace $V_p$ which satisfies $V_p\subset V$.
The standard Galerkin approximation scheme for problem (\ref{weak_form_Neumann}) is: 
Find $u_p\in V_p$ such that
\begin{eqnarray}\label{weak_form_Neumann_Discrete}
a(u_p,v_p)=(f,v_p)+(g,v_p)_{L^2(\partial\Omega)},\ \ \ \forall v_p\in V_p.
\end{eqnarray}

Before introducing the a posteriori error estimate for the second-order elliptic problem 
with Neumann boundary condition, we introduce the following Steklov eigenvalue problem: 
Find $(\lambda,u)\in\mathbb R\times V$ such that 
\begin{equation}\label{Steklov_Eigenvalue}
\left\{
\begin{array}{rcl}
-\nabla\cdot(\mathcal A\nabla u) + b u &=&0,  \ \ \ \  {\rm in}\ \Omega,\\
(\mathcal A\nabla u)\cdot\mathbf n &=& \lambda u,\ \ {\rm on}\ \partial\Omega. 
\end{array}
\right.
\end{equation}
The smallest eigenvalue $\lambda_{\rm min}$ of the Steklov eigenvalue 
problem (\ref{Steklov_Eigenvalue}) satisfies the following property
\begin{eqnarray}\label{Inequality_5}
\lambda_{\rm min} = \inf_{v\in V}\frac{\|v\|_a^2}{\|v\|_{0,\partial\Omega}^2}.
\end{eqnarray}
Then, we can obtain the following a posteriori error estimation for the Galerkin approximation $u_p$.
\begin{theorem}
Assume $u\in V$ and $u_p\in V_p \subset V$ are the exact solution of (\ref{weak_form_Neumann}) 
and the corresponding subspace approximation defined by 
(\ref{weak_form_Neumann_Discrete}), respectively. The following upper bound holds
\begin{eqnarray}\label{Upper_Bound_U_Neumann}
\|u-u_p\|_a\leq \left(1+\frac{1}{\lambda_{\rm min}}\right)^{1/2}\eta(u_p,\mathbf y),
\ \ \ \forall \mathbf y\in\mathbf H({\rm div},\Omega),
\end{eqnarray}
where $\lambda_{\rm min}$ denotes the smallest eigenvalue of the 
Steklov eigenvalue problem (\ref{Steklov_Eigenvalue}) 
and $\eta(u_p,\mathbf y)$ is defined as follows
\begin{eqnarray}\label{eta_Neumann}
&&\ \ \ \ \ \ \ \ \ \eta(u_p,\mathbf y)\nonumber\\
&&\ \ \ \ \ \ \ \ \ := \left(\big\|b^{-\frac{1}{2}}
\big(f-b u_p+{\rm div}(\mathcal A\mathbf y)\big)\big\|_0^2
+\big\|\mathcal A^{\frac{1}{2}}(\mathbf y-\nabla u_p)\big\|_0^2
+\big\|g-(\mathcal A\mathbf y)\cdot\mathbf n\big\|_{0,\partial\Omega}^2 \right)^{\frac{1}{2}}.
\end{eqnarray}
\end{theorem}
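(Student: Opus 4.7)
The plan is to follow the same recipe as in Theorem~\ref{Theorem_hDirichlet}, but this time carrying the boundary contribution separately since test functions $v \in V = H^1(\Omega)$ no longer vanish on $\partial\Omega$. Setting $v = u - u_p$, I would begin by writing
\[
a(u-u_p,v) = (f,v) + (g,v)_{L^2(\partial\Omega)} - (\mathcal{A}\nabla u_p,\nabla v) - (b u_p, v),
\]
using the weak form~\eqref{weak_form_Neumann}. Then, for an arbitrary $\mathbf{y} \in \mathbf{H}(\mathrm{div},\Omega)$, I would insert the zero quantity $(\mathcal{A}\mathbf{y},\nabla v) + (\mathrm{div}(\mathcal{A}\mathbf{y}),v) - ((\mathcal{A}\mathbf{y})\cdot\mathbf{n},v)_{L^2(\partial\Omega)}$, which vanishes thanks to Lemma~\ref{lemma_Green}. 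Regrouping yields the identity
\[
a(u-u_p,v) = \bigl(f - b u_p + \mathrm{div}(\mathcal{A}\mathbf{y}),\, v\bigr) + \bigl(\mathcal{A}(\mathbf{y}-\nabla u_p),\,\nabla v\bigr) + \bigl(g - (\mathcal{A}\mathbf{y})\cdot\mathbf{n},\, v\bigr)_{L^2(\partial\Omega)}.
\]

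Next, I would apply Cauchy--Schwarz term by term, pulling out the weights $b^{\pm 1/2}$ and $\mathcal{A}^{\pm 1/2}$ so that the first two factors are controlled by pieces of $\|v\|_a$: specifically, $\|b^{1/2}v\|_0$ and $\|\mathcal{A}^{1/2}\nabla v\|_0$, whose squares add to $\|v\|_a^2$. One more Cauchy--Schwarz on these two interior contributions gives the bound
\[
\bigl\|b^{-1/2}(f - b u_p + \mathrm{div}(\mathcal{A}\mathbf{y}))\bigr\|_0^2 + \bigl\|\mathcal{A}^{1/2}(\mathbf{y}-\nabla u_p)\bigr\|_0^2\bigr)^{1/2}\,\|v\|_a,
\]
as in the Dirichlet case. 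The genuinely new term is the boundary one, $\|g-(\mathcal{A}\mathbf{y})\cdot\mathbf{n}\|_{0,\partial\Omega}\|v\|_{0,\partial\Omega}$, and here I would invoke the Steklov bound~\eqref{Inequality_5}, $\|v\|_{0,\partial\Omega} \leq \lambda_{\min}^{-1/2}\|v\|_a$, to replace the boundary norm of $v$ by its energy norm.

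At that point I would have $\|v\|_a^2 = a(u-u_p,v) \leq (\alpha + \beta/\sqrt{\lambda_{\min}})\,\|v\|_a$, where $\alpha^2$ equals the sum of the first two squared residual norms in~\eqref{eta_Neumann} and $\beta$ is the boundary residual norm. Dividing by $\|v\|_a$ and applying a final Cauchy--Schwarz of the form $\alpha\cdot 1 + \beta\cdot\lambda_{\min}^{-1/2} \leq \sqrt{1+\lambda_{\min}^{-1}}\,\sqrt{\alpha^2+\beta^2}$ recombines the three residual contributions into a single $\eta(u_p,\mathbf{y})$ and produces the prefactor $(1+1/\lambda_{\min})^{1/2}$ stated in~\eqref{Upper_Bound_U_Neumann}.

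The bulk of the argument is routine and parallels the homogeneous Dirichlet proof; the only conceptually new step, and thus the main obstacle, is handling the boundary residual. The crucial observation is that while $v$ no longer vanishes on $\partial\Omega$, the trace norm $\|v\|_{0,\partial\Omega}$ can still be controlled by the energy norm $\|v\|_a$ via the smallest Steklov eigenvalue, which is exactly the role of~\eqref{Inequality_5}. Choosing the correct final Cauchy--Schwarz weighting to bundle $\alpha$ and $\beta/\sqrt{\lambda_{\min}}$ into the clean single-square-root constant $(1+1/\lambda_{\min})^{1/2}$ is the small but essential bookkeeping step that yields the sharp form of the estimator.
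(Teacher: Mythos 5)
Your proposal is correct and follows essentially the same route as the paper: the same Green's-formula insertion of $\mathbf y$, the same three-term residual identity, and the same use of the Steklov characterization \eqref{Inequality_5} to control $\|v\|_{0,\partial\Omega}$ by $\lambda_{\min}^{-1/2}\|v\|_a$. The only difference is bookkeeping order --- the paper applies one three-term discrete Cauchy--Schwarz and then the Steklov bound, while you apply the Steklov bound to the boundary term first and recombine with a final two-term Cauchy--Schwarz --- and both yield the identical constant $(1+1/\lambda_{\min})^{1/2}$.
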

\begin{proof}
Let us define $v=u-u_p$ in this proof.
Then by Lemma \ref{lemma_Green} and Cauchy-Schwarz inequality, 
for any $\mathbf y\in\mathbf W$, the following estimates hold
\begin{eqnarray*}
&&a(u-u_p,v)=(f,v)+(g,v)_{L^2(\partial\Omega)}-(\mathcal A\nabla u_p,\nabla v)-(b u_p,v)\nonumber\\
&=&(f,v)+(g,v)_{L^2(\partial\Omega)}-(\mathcal A\nabla u_p,\nabla v)-(b u_p,v)\nonumber\\
&&+({\rm div}(\mathcal A\mathbf y),v)+(\mathcal A\mathbf y,\nabla v)
-((\mathcal A\mathbf y)\cdot\mathbf n,v)_{L^2(\partial\Omega)}\nonumber\\
&=&(f-b u_p+{\rm div}(\mathcal A\mathbf y),v)+(\mathcal A(\mathbf y-\nabla u_p),\nabla v)
+(g-(\mathcal A\mathbf y)\cdot\mathbf n,v)_{L^2(\partial\Omega)}\nonumber\\
&\leq&\|b^{-\frac{1}{2}}(f-b u_p+{\rm div}(\mathcal A\mathbf y))\|_0\|b^{\frac{1}{2}}v\|_0
+\|\mathcal A^{\frac{1}{2}}(\mathbf y-\nabla u_p)\|_0\|\mathcal A^{\frac{1}{2}}\nabla v\|_0\nonumber\\
&&+\|g-(\mathcal A\mathbf y)\cdot\mathbf n\|_{0,\partial\Omega}\|v\|_{0,\partial\Omega}\nonumber\\
&\leq&\left(\|b^{-\frac{1}{2}}(f-b u_p+{\rm div}(\mathcal A\mathbf y))\|_0^2
+\|\mathcal A^{\frac{1}{2}}(\mathbf y-\nabla u_p)\|_0^2+\|g-(\mathcal A\mathbf y)\cdot\mathbf n\|_{0,\partial\Omega}^2\right)^{\frac{1}{2}}\nonumber\\
&&\times\left(\|b^{\frac{1}{2}}v\|_0^2+\|\mathcal A^{\frac{1}{2}}\nabla v\|_0^2
+\|v\|_{0,\partial\Omega}^2\right)^{\frac{1}{2}}\nonumber\\
&=&\left(\|b^{-\frac{1}{2}}(f-b u_p+{\rm div}(\mathcal A\mathbf y))\|_0^2
+\|\mathcal A^{\frac{1}{2}}(\mathbf y-\nabla u_p)\|_0^2+\|g-(\mathcal A\mathbf y)\cdot\mathbf n\|_{0,\partial\Omega}^2\right)^{\frac{1}{2}}\nonumber\\
&&\times\left(\|v\|_a^2+\|v\|_{0,\partial\Omega}^2\right)^{\frac{1}{2}}\nonumber\\
&\leq&\left(\|b^{-\frac{1}{2}}(f-b u_p+{\rm div}(\mathcal A\mathbf y))\|_0^2
+\|\mathcal A^{\frac{1}{2}}(\mathbf y-\nabla u_p)\|_0^2+\|g-(\mathcal A\mathbf y)\cdot\mathbf n\|_{0,\partial\Omega}^2\right)^{\frac{1}{2}}\nonumber\\
&&\times \left(1+\frac{1}{\lambda_{\rm min}}\right)^{\frac{1}{2}}\|v\|_a,
\end{eqnarray*}
where the last inequality is due to property (\ref{Inequality_5}).
Then the desired result (\ref{Upper_Bound_U_Neumann}) can be obtained and the proof is complete.
\end{proof}

\subsection{A posteriori error estimator for eigenvalue problem}
In this subsection, we are concerned with the following high-dimensional second-order 
elliptic eigenvalue problem: 
Find $(\lambda,u)\in \mathbb R\times H_0^1(\Omega)$ such that 
\begin{eqnarray}\label{Eigen_Prob}
\left\{
\begin{array}{rcl}
-\nabla\cdot(\mathcal A\nabla u)+  b u&=&\lambda u, \quad {\rm in} \  \Omega,\\
u&=&0, \quad \ \  {\rm on}\  \partial\Omega.
\end{array}
\right.
\end{eqnarray}

The variational form for problem (\ref{Eigen_Prob}) can be described as: 
Find $(\lambda,u)\in\mathbb R\times V$ such that $\|u\|_0=1$ and
\begin{eqnarray}\label{weak_form_eigen}
a(u,v)=\lambda(u,v),\ \ \ \forall v\in V,
\end{eqnarray}
where $V:= H_0^1(\Omega)$. 

The corresponding energy norm $\|\cdot\|_a$ can be defined as $\|v\|_a=\sqrt{a(v,v)}$.
It is well known that the eigenvalue problem (\ref{weak_form_eigen})
has an eigenvalue sequence $\{\lambda_j \}$
(cf. \cite{BabuskaOsborn_Book}):
\begin{eqnarray}\label{Eigenvalues}
0<\lambda_1\leq \lambda_2\leq\cdots\leq\lambda_k\leq\cdots,\ \ \
\lim_{k\rightarrow\infty}\lambda_k=\infty,
\end{eqnarray} 
and associated eigenfunctions
\begin{eqnarray}\label{Eigenfunctions}
u_1, u_2, \cdots, u_k, \cdots,
\end{eqnarray}
where $(u_i,u_j)=\delta_{ij}$ ($\delta_{ij}$ denotes the Kronecker function).
In the sequence $\{\lambda_j\}$, the $\lambda_j$ are repeated according to their
geometric multiplicity.

Assume we have a $p$-dimensional subspace $V_p$ which satisfies $V_p\subset V$.
The standard subspace approximation scheme for the problem (\ref{weak_form_eigen}) is: 
Find $(\lambda_p,u_p)\in\mathbb R\times V_p$ such that $\|u_p\|_0=1$ and
\begin{eqnarray}\label{weak_form_eigen_approx}
a(u_p,v_p)=\lambda_p(u_p,v_p),\ \ \ \forall v_p\in V_p.
\end{eqnarray}
In order to deduce the error estimate for the eigenfunction approximations, 
we define the spectral projection $E_1: V\rightarrow {\rm span}\{u_1\}$ 
as follows
\begin{eqnarray*}
a(E_1w, v) = a(w, v),\ \ \ \ \forall v\in {\rm span}\{u_1\}. 
\end{eqnarray*}
Then, a guaranteed upper bound exists for the energy error of eigenfunction 
approximation $u_p$ and exact solution $u$.
\begin{theorem}\label{theorem_eigen_post}
Let $(\lambda_1,u_1)$ denote the smallest exact eigenpair of 
the eigenvalue problem (\ref{weak_form_eigen}). Assume the discrete eigenpair 
$(\lambda_p,u_p)$ obtained by solving (\ref{weak_form_eigen_approx}) 
is the approximation to the first exact eigenpair $(\lambda_1,u_1)$ 
and the eigenvalue separation $\lambda_2>\lambda_1$ and $\lambda_2>\lambda_p$ holds. 
Then the following guaranteed upper bound holds
\begin{eqnarray}\label{Estimate_Eigenfunction}
\|E_1u_p-u_p\|_a\leq \frac{\lambda_2}{\lambda_2-\lambda_p}\eta(\lambda_p,u_p,\mathbf y),
\ \ \ \forall\mathbf y\in\mathbf W,
\end{eqnarray}
where $\eta(\lambda_p,u_p,\mathbf y)$ is defined as follows
\begin{eqnarray}\label{eta_Eigen}
\ \ \ \eta(\lambda_p,u_p,\mathbf y):=\left(\|b^{-\frac{1}{2}}(\lambda_pu_p-bu_p
+{\rm div}(\mathcal A\mathbf y))\|_0^2 
+\|\mathcal A^{\frac{1}{2}}(\mathbf y-\nabla u_p)\|_0^2\right)^{\frac{1}{2}}.
\end{eqnarray}
\end{theorem}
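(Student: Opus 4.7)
The plan is to mimic the a posteriori arguments of Theorems \ref{Theorem_hDirichlet} and \ref{Theorem_nonhDirichlet}, but with the crucial modification that we must absorb a term proportional to $\|u_p - E_1 u_p\|_0^2$ on the right-hand side; this is where the spectral gap factor $\lambda_2/(\lambda_2 - \lambda_p)$ enters. Set $v := u_p - E_1 u_p \in V$ and work throughout with the energy identity $\|v\|_a^2 = a(u_p - E_1 u_p, v)$. First I would unpack the definition of the spectral projection: since $E_1 u_p \in \operatorname{span}\{u_1\}$ and $a(u_1,u_1) = \lambda_1$, the defining relation $a(E_1 u_p, u_1) = a(u_p, u_1)$ together with $a(u_p, u_1) = \lambda_1(u_p, u_1)$ (because $u_1$ is an exact eigenfunction and $u_p \in V$) yields $E_1 u_p = (u_p, u_1)u_1$. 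This immediately gives the $L^2$-orthogonality $(v, u_1) = 0$, hence via the eigenfunction expansion $v = \sum_{k\geq 2}\alpha_k u_k$ one obtains the key Rayleigh-type bound
\begin{equation*}
\lambda_2 \|v\|_0^2 \leq \|v\|_a^2.
\end{equation*}

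Next I would compute $a(E_1 u_p, v) = (u_p,u_1) a(u_1, v) = (u_p,u_1)\lambda_1 (u_1, v) = 0$, so $\|v\|_a^2 = a(u_p, v)$. At this point the manipulation follows exactly the pattern of Theorem \ref{Theorem_hDirichlet}: for any $\mathbf{y}\in \mathbf{W}$, insert $\pm(\mathcal{A}\mathbf{y},\nabla v) \pm (\mathrm{div}(\mathcal{A}\mathbf{y}), v)$ and use Lemma \ref{lemma_Green} (the boundary contribution vanishes since $v \in H_0^1(\Omega)$) to rewrite
\begin{equation*}
a(u_p, v) = (\mathcal{A}(\nabla u_p - \mathbf{y}), \nabla v) + (b u_p - \mathrm{div}(\mathcal{A}\mathbf{y}), v).
\end{equation*}
Now, unlike the source-problem case, the analogue of $f$ is $\lambda_p u_p$ (the right-hand side of the discrete eigenvalue equation), so I would rewrite $b u_p - \mathrm{div}(\mathcal{A}\mathbf{y}) = \lambda_p u_p - (\lambda_p u_p - b u_p + \mathrm{div}(\mathcal{A}\mathbf{y}))$. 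This splits $a(u_p,v)$ into a residual part plus $\lambda_p(u_p, v)$, and since $(u_p, v) = (u_p, u_p - E_1 u_p) = 1 - (u_p,u_1)^2 = \|v\|_0^2$, the second piece becomes exactly $\lambda_p \|v\|_0^2$.

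Assembling these pieces and applying Cauchy--Schwarz in the weighted inner products (as in (\ref{Inequality_1})) yields
\begin{equation*}
\|v\|_a^2 \leq \eta(\lambda_p, u_p, \mathbf{y})\,\|v\|_a + \lambda_p \|v\|_0^2.
\end{equation*}
The final step is to invoke the Rayleigh bound $\|v\|_0^2 \leq \|v\|_a^2/\lambda_2$ to absorb the second term, giving $(1 - \lambda_p/\lambda_2)\|v\|_a^2 \leq \eta \|v\|_a$; since $\lambda_2 > \lambda_p$ by hypothesis, dividing through yields the desired bound with the constant $\lambda_2/(\lambda_2 - \lambda_p)$. I expect the main obstacle (and the only nonroutine step relative to the boundary-value proofs) to be the careful identification $(u_p, v) = \|v\|_0^2$ together with justifying $\|v\|_a^2 \geq \lambda_2 \|v\|_0^2$; both hinge on the $L^2$-orthogonality $(v, u_1) = 0$, which in turn depends on identifying $E_1 u_p$ as the $L^2$-projection onto $\operatorname{span}\{u_1\}$ via the fact that $u_1$ is an exact eigenfunction.
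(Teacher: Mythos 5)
Your proposal is correct and follows essentially the same route as the paper's proof: express $\|v\|_a^2=a(u_p,v)$ via the $a$-orthogonality of $v$ to $E_1u_p$, insert the flux $\mathbf y$ through Green's formula, add and subtract $\lambda_p(u_p,v)$ so that the residual $\lambda_p u_p - bu_p + {\rm div}(\mathcal A\mathbf y)$ appears, identify $\lambda_p(u_p,v)=\lambda_p\|v\|_0^2$ via the $L^2$-orthogonality, and absorb that term using $\lambda_2\|v\|_0^2\le\|v\|_a^2$. The only difference is that you spell out the identification $E_1u_p=(u_p,u_1)u_1$ and the resulting orthogonalities explicitly, which the paper states without proof.
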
 

\begin{proof}
In order to simplify the notation, let us define $v=E_1u_p-u_p$. 
It is easy to know that $v \perp E_1u_p$ in the sense of $a(\cdot,\cdot)$ 
and $(\cdot,\cdot)$,  and the following inequality holds 
\begin{eqnarray}\label{Inequality_2}
\frac{\|u_p-E_1u_p\|_a^2}{\|u_p-E_1u_p\|_0^2} \geq \lambda_2.
\end{eqnarray}
 With the help of (\ref{Inequality_2}), we have the following estimates
\begin{eqnarray}\label{Inequality_3}
&&a(E_1u_p - u_p,v)=-a(u_p,v)=-(\mathcal A\nabla u_p,\nabla v)-(bu_p,v)\nonumber\\
&&=\lambda_p(u_p,v)-(\mathcal A\nabla u_p,\nabla v)-(bu_p,v)+(\mathcal A\mathbf y,\nabla v)
+({\rm div}(\mathcal A\mathbf y),v)-\lambda_p(u_p,v)\nonumber\\
&&=(\lambda_pu_p-bu_p+{\rm div}(\mathcal A\mathbf y),v)+(\mathcal A(\mathbf y-\nabla u_p),\nabla v)
+\lambda_p (E_1u_p-u_p,v)\nonumber\\
&&\leq\|b^{-\frac{1}{2}}(\lambda_pu_p-bu_p+{\rm div}(\mathcal A\mathbf y))\|_0\|b^{\frac{1}{2}}v\|_0
+\|\mathcal A^{\frac{1}{2}}(\mathbf y-\nabla u_p)\|_0\|\mathcal A^{\frac{1}{2}}\nabla v\|_0\nonumber\\
&&\ \ \ \ \  +\lambda_p\|E_1u_p-u_p\|_0^2\nonumber\\
&&\leq \left(\|b^{-\frac{1}{2}}(\lambda_pu_p-bu_p+{\rm div}(\mathcal A\mathbf y))\|_0^2
+\|\mathcal A^{\frac{1}{2}}(\mathbf y-\nabla u_p)\|_0^2\right)^{\frac{1}{2}}\|v\|_a\nonumber\\
&&\ \ \ \ \ +\frac{\lambda_p}{\lambda_2}\|E_1u_p-u_p\|_a^2.
\end{eqnarray}
From (\ref{Inequality_3}) and  $\lambda_2>\lambda_p$, the following inequality holds
\begin{eqnarray}\label{Inequality_4}
&&\left(1-\frac{\lambda_p}{\lambda_2}\right)\|E_1u_p-u_p\|_a^2\nonumber\\
&&\leq  
\left(\|b^{-\frac{1}{2}}(\lambda_pu_p-bu_p+{\rm div}(\mathcal A\mathbf y))\|_0^2
+\|\mathcal A^{\frac{1}{2}}(\mathbf y-\nabla u_p)\|_0^2\right)^{\frac{1}{2}}\|v\|_a. 
\end{eqnarray}
Then the desired result (\ref{Estimate_Eigenfunction}) can be deduced from (\ref{Inequality_4}), 
and the proof is complete. 
\end{proof}

\begin{remark}
Theorem \ref{theorem_eigen_post}
is only concerned with the error estimates for the eigenfunction 
approximation since the error estimates for the eigenvalue approximation can be easily deduced
from the following error expansion
\begin{eqnarray*}
0\leq \lambda_p-\lambda&=&\frac{a(u_p-u_1,u_p-u_1)}{(u_p,u_p)}
-\lambda\frac{(u_p-u_1,u_p-u_1)}{(u_p,u_p)}\\
&\leq&\frac{\|u_p-u_1\|_a^2}{\|u_p\|_0^2}\leq\|u_p-E_1u_p\|_a^2
\leq \Big(\frac{\lambda_2}{\lambda_2-\lambda_p}\Big)^2 \eta^2(\lambda_p,u_p,\mathbf y).
\end{eqnarray*}
\end{remark}

\subsection{Construction of loss function}
The aim of introducing the a posteriori error estimates in previous subsections is to 
define the loss functions of the machine learning methods for solving the concerned 
second-order elliptic boundary value problems and eigenvalue problems in this paper.  

These a posteriori error estimates provide 
an upper bound of the approximation errors in the energy norm. 
As we know, the numerical methods aim to produce the approximations which have 
the optimal errors in some sense.
This inspires us to use these a posteriori error estimates as the 
loss functions of the TNN-based machine learning method
for solving the concerned problems in this paper.

Take the homogeneous Dirichlet type of boundary value problem as an example. 
Denote the Galerkin approximation on space $V_p$ for problem (\ref{weak_form_hDirichlet}) 
as $u_p=u_p(V_p)$.
Now, consider subspaces $V_p\subset V$ with a fixed dimension $p$.
From Theorem \ref{Theorem_hDirichlet}, as long as $V_p\subset V$, the following inequality holds
\begin{eqnarray}\label{ineq_error_eta}
\inf_{\substack{V_p\subset V\\\dim(V_p)=p}}\|u-u_p(V_p)\|_a 
\leq \inf_{\substack{V_p\subset V\\\dim(V_p)=p}}\inf_{\mathbf y\in\mathbf W}\eta(u_p(V_p),\mathbf y).
\end{eqnarray}
The error of the best $p$-dimensional Galerkin approximation can be controlled by 
solving the optimization problem on the right-hand side of the inequality (\ref{ineq_error_eta}). 
Based on the discussion in Section \ref{section_choose_Vp}, we can define the following loss function 
for the TNN-based machine learning method 
at the $\ell$-th step
\begin{eqnarray*}
\mathcal L=\inf_{\mathbf y\in\mathbf W}\eta(u_p(V_p^{(\ell)}),\mathbf y).
\end{eqnarray*}
In real implementation, in order to make full use of the parameters of TNN and 
improve the computational efficiency, we choose  $\mathbf y=\nabla u_p(V_p^{(\ell)}) \in\mathbf W$. 
Then, we define the loss function as follows
\begin{eqnarray*}
\mathcal L=\eta\left(u_p\left(V_p^{(\ell)}\right),\nabla u_p\left(V_p^{(\ell)}\right)\right)
=\left\|b^{-\frac{1}{2}}\left(f-b u_p(V_p^{(\ell)})+ \nabla\cdot(\mathcal A\nabla u_p(V_p^{(\ell)}))\right)\right\|_0.
\end{eqnarray*}
In the next section, we will show how to calculate the loss function $\mathcal L$ 
and propose a complete TNN-based machine learning algorithm for solving the second-order elliptical PDEs.

\section{TNN-based method for second order elliptic problems}\label{TNN_Machine_Learning}
In this section, we propose the TNN-based machine learning method for solving 
the second-order elliptic problems included in the previous section. 
The TNN-based machine learning method can be regarded as an adaptive mesh-free method. 
This means that the approximating space $V_p$ is chosen adaptively in the learning process.
The corresponding procedure can be broken down into the following steps:
\begin{enumerate}
\item Define the $p$-dimensional space $V_p$ as (\ref{def_Vpl}) by the combination of subnetworks.
\item Obtain Galerkin approximation on the space $V_p$, which is expressed 
in the form of (\ref{def_TNN_normed}).
\item Compute a posteriori error estimator and the descent direction.
\item Update the parameters of TNN
to update the subspace $V_p$.
\end{enumerate}
The way to define $V_p$ at step 1 has already been introduced in detail 
in Sections \ref{Section_TNN} and \ref{section_choose_Vp}. 
In Section \ref{Section_Posteriori}, we construct a posteriori error estimators
that can be used  at step 3 for different problems.  
In the following few subsections, we will  introduce how to use TNN to 
express the Galerkin approximations for different types of PDEs. 

\subsection{Finding Galerkin approximation by TNN}\label{section_find_galerkin}
It is well known that the Galerkin approximation is the optimal solution according 
to the energy norm. Taking the second-order elliptic Dirichlet boundary value 
problem as an example, the following C\'{e}a's lemma 
holds \cite{brenner2007mathematical,ciarlet2002finite}.

\begin{lemma}[C\'{e}a's lemma]
If the bilinear form $a(\cdot,\cdot)$ is defined by (\ref{Bilinear_Form}) in 
Section \ref{section_post_dirichlet} and subspace approximation $u_p$ defined by 
(\ref{weak_form_hDirichlet_Discrete}) for the  problem (\ref{weak_form_hDirichlet}), 
the following estimates hold
\begin{eqnarray*}
\|u-u_p\|_a =  \inf_{v_p\in V_p}\|u-v_p\|_a.
\end{eqnarray*}
\end{lemma}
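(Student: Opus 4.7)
The plan is to prove this classical Céa identity by exploiting the symmetry of the bilinear form $a(\cdot,\cdot)$, which holds because $\mathcal{A}$ is symmetric positive definite and the reaction term $(bu,v)$ is automatically symmetric. Combined with the positive lower bound on $b$ assumed in Section~1, this makes $a(\cdot,\cdot)$ an inner product on $V = H_0^1(\Omega)$ whose induced norm is precisely $\|\cdot\|_a$. The stated equality (not merely an inequality) is then the Pythagorean theorem for orthogonal projection in this inner-product space, with $u_p$ playing the role of the $a$-orthogonal projection of $u$ onto $V_p$.

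The first step would be to establish Galerkin orthogonality: subtracting the discrete equation (\ref{weak_form_hDirichlet_Discrete}) from (\ref{weak_form_hDirichlet}) with test functions restricted to $V_p \subset V$ yields
$$a(u - u_p, v_p) = 0, \qquad \forall v_p \in V_p.$$
Hence the error $u - u_p$ is $a$-orthogonal to the entire subspace $V_p$.

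The second step is the Pythagorean expansion. For any $v_p \in V_p$, write $u - v_p = (u - u_p) + (u_p - v_p)$ and note that $u_p - v_p \in V_p$. Expanding and applying Galerkin orthogonality to eliminate the cross term gives
$$\|u - v_p\|_a^2 = \|u - u_p\|_a^2 + \|u_p - v_p\|_a^2 \ \geq\ \|u - u_p\|_a^2.$$
Taking the infimum over $v_p \in V_p$ proves $\|u-u_p\|_a \le \inf_{v_p\in V_p}\|u-v_p\|_a$, and the reverse inequality is immediate from the admissible choice $v_p = u_p \in V_p$, giving equality.

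There is essentially no serious obstacle here; the argument is a three-line application of Pythagoras once symmetry and Galerkin orthogonality are on the table. The only points meriting a brief check are that $\|\cdot\|_a$ is genuinely a norm on $V$ (which follows from the symmetric positive definiteness of $\mathcal{A}$ together with $b$ bounded below by a positive constant, both standing assumptions of the paper) and that the \emph{symmetry} of $a(\cdot,\cdot)$ is exactly what upgrades the usual quasi-optimality bound $\|u-u_p\|_a \le C\inf_{v_p \in V_p}\|u-v_p\|_a$ (available for any coercive bilinear form) into the sharp equality stated in the lemma.
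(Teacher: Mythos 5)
Your proof is correct and complete: Galerkin orthogonality plus the Pythagorean identity (valid because $a(\cdot,\cdot)$ is symmetric, by the symmetry of $\mathcal A$, and positive definite, by the positive lower bound on $b$) gives the inequality, and the choice $v_p=u_p$ gives equality. The paper itself does not prove this lemma, only citing the standard finite element references, and your argument is precisely the textbook proof those references contain.
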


According to the definition of TNN (\ref{def_TNN_normed}) and the definition 
of the corresponding space $V_p^{(\ell)}$ (\ref{def_Vpl}), 
we can see that the neural network parameters determine the space $V_p^{(\ell)}$, 
while the coefficient parameters $c^{(\ell+1)}$ determine the direction of TNN 
in the space $V_p^{(\ell)}$. When the neural network parameter $\theta^{(\ell)}$ 
is fixed at the  $\ell$-th step, the Galerkin approximation $u_p^{(\ell)}$ can be 
constructed by 
\begin{eqnarray*}
u_p^{(\ell)} = \Psi(x;c^{(\ell+1)},\theta^{(\ell)}),
\end{eqnarray*}
where the linear coefficient parameters $c^{(\ell+1)}$ in  (\ref{def_TNN_normed}) 
is obtained by solving the following system of linear equations
\begin{eqnarray}\label{Galerkin_Problem}
A^{(\ell)}c^{(\ell+1)}=B^{(\ell)},
\end{eqnarray}
where $A^{(\ell)}\in \mathbb R^{p\times p}$ and $B^{(\ell)}\in\mathbb R^{p\times 1}$,
\begin{eqnarray*}
A_{m,n}^{(\ell)}=a(\varphi_n^{(\ell)},\varphi_m^{(\ell)}),\ \ \ 
B_m^{(\ell)}=(f,\varphi_m^{(\ell)}),\ \ \ 1\leq m, n\leq p. 
\end{eqnarray*}
This inspires us to divide the optimization step into two sub-steps.
Firstly, when the neural network parameters $\theta^{(\ell)}$ are fixed, 
the optimal coefficient parameters $c^{(\ell+1)}$ are obtained by solving 
the Galerkin approximation equation (\ref{Galerkin_Problem}). 
Secondly, when the coefficient parameters $c^{(\ell+1)}$ are fixed, 
the neural network parameters $\theta^{(\ell+1)}$
can be updated by the Gradient descent step 
(\ref{Optimization_Step}) with the loss function being defined 
by the a posteriori error estimators. 
In the following subsections, we will give the complete TNN-based algorithm for solving 
the boundary value problems with homogeneous and non-homogeneous 
Dirichlet type and Neumann type boundary conditions, 
and eigenvalue problem of second-order elliptic operator.

\subsection{Algorithm for homogenous Dirichlet boundary value problem}\label{section_alg_hDirichlet}
In order to avoid the penalty of the Dirichlet boundary condition, the 
trial function defined by TNN is required to belong to 
the space $H_0^1(\Omega)$. For this aim, we use the method in \cite{GuWangYang}, 
which is firstly proposed in \cite{LagarisLikasFotiadis,LagarisLikasPapageorgiou}, 
to treat the Dirichlet boundary condition. 
Based on the TNN defined by (\ref{def_TNN_normed}), we multiply each 
$\widehat\phi_{i,j}(x_i;\theta_i)$ by the function $h(x_i):=(x_i-a_i)(b_i-x_i)$ 
to obtain the homogeneous trial function subspace $V_p^{(\ell)}$ as follows
\begin{eqnarray*}
V_{0,p}^{(\ell)}:={\rm span}\left\{\varphi_{j}(x;\theta^{(\ell)})
:=\prod_{i=1}^{d}\left(\widehat\phi_{i,j}\left(x_{i};\theta_{i}^{(\ell)}\right)
(x_i-a_i)(b_i-x_i)\right), 
j=1, \cdots, p\right\}.
\end{eqnarray*}
Obviously, we have $V_{0,p}^{(\ell)}\subset H_0^1(\Omega)$.
It should be pointed out that there are many choices of the limiting function $h(x_i)$ 
as long as they satisfy the condition of being zero on $\partial\Omega_i$ and nonzero in $\Omega_i$.
The details of the TNN-based method for homogenous Dirichlet boundary value problem are defined 
in Algorithm \ref{Algorithm_1}.

\begin{algorithm}[htb!]
\caption{TNN-based method for homogeneous Dirichlet boundary value problem}\label{Algorithm_1}
\begin{enumerate}
\item Initialization step: Build initial TNN $\Psi(x;c^{(0)},\theta^{(0)})$ 
as in (\ref{def_TNN_normed}), set the maximum training steps $M$, 
learning rate $\gamma$ and $\ell=0$.

\item Define the $p$-dimensional space $V_p^{(\ell)}$ as follows
\begin{eqnarray*}
V_{0,p}^{(\ell)}:={\rm span}\left\{\varphi_{j}(x;\theta^{(\ell)}):=\prod_{i=1}^{d}\widehat\phi_{i,j}
\left(x_{i};\theta_{i}^{(\ell)}\right)(x_i-a_i)(b_i-x_i), j=1, \cdots, p\right\}.
\end{eqnarray*}
Assemble the stiffness matrix $A^{(\ell)}$ and the right-hand side term $B^{(\ell)}$ on $V_{p}^{(\ell)}$.
The entries are defined as follows
\begin{eqnarray*}
&&A_{m,n}^{(\ell)}=a(\varphi_n^{(\ell)},\varphi_m^{(\ell)})=(\mathcal A\nabla\varphi_n^{(\ell)},\nabla\varphi_m^{(\ell)})+(b\varphi_n^{(\ell)},\varphi_m^{(\ell)}),
\ 1\leq m,n\leq p, \\
&&B_m^{(\ell)}=(f,\varphi_m^{(\ell)}),\ 1\leq m\leq p.
\end{eqnarray*}

\item Solve the following linear 
 equation to obtain the solution $c\in\mathbb R^{p\times 1}$
\begin{eqnarray*}
A^{(\ell)}c=B^{(\ell)}.
\end{eqnarray*}
Update the coefficient parameter as $c^{(\ell+1)}=c$.
Then the Galerkin approximation on the space $V_{p}^{(\ell)}$ for problem 
(\ref{weak_form_hDirichlet}) is $\Psi(x;c^{(\ell+1)},\theta^{(\ell)})$.

\item Compute the a posteriori error estimator (\ref{eta_hDirichlet}) (loss function)
\begin{eqnarray*}
\mathcal L^{(\ell+1)}(c^{(\ell+1)},\theta^{(\ell)}):=
\eta(\Psi(x;c^{(\ell+1)},\theta^{(\ell)}),\nabla\Psi(x;c^{(\ell+1)},\theta^{(\ell)})).
\end{eqnarray*}

\item Update the neural network parameter $\theta^{(\ell)}$ of TNN as follows 
\begin{eqnarray*}
\theta^{(\ell+1)}=\theta^{(\ell)}-\gamma\frac{\partial\mathcal L^{\ell+1)}}{\partial\theta}(c^{(\ell+1)},\theta^{(\ell)}).
\end{eqnarray*}

\item Set $\ell=\ell+1$ and go to Step 2 for the next step until $\ell=M$.

\end{enumerate}
\end{algorithm}

\subsection{Algorithm for non-homogenous Dirichlet boundary value problem}\label{section_alg_nonhDirichlet}
According to the theory of a posterior error estimation for non-homogeneous 
Dirichlet boundary value problems introduced in Section \ref{section_post_nonhomo_dirichlet}, 
we treat the solution on $\partial\Omega$ and in $\Omega$ separately. 
Similarly to the finite element method, to solve the problem 
(\ref{ModelProblem_nonhomoDirichlet}),  we define two different TNNs, $\Psi_1$ and $\Psi_2$, 
where $\Psi_1$ is used to fit the Dirichlet boundary condition $g$ and $\Psi_2$ 
is used to solve homogeneous Dirichlet boundary value problem (\ref{ModelProblem_homoDirichlet_u0}).

To be specific, we first train $\Psi_1$ with the following loss function
\begin{eqnarray}\label{eq_loss_bd}
\mathcal L_{\rm bd}=\|\Psi_1(x;c_1,\theta_1)-g(x)\|_{0,\partial\Omega},
\end{eqnarray}
and obtain the parameters $c_1^*$ and $\theta_1^*$ after enough epochs.
From the definition of the computational domain 
$\Omega=\Omega_1\times\cdots\times\Omega_d=(a_1,b_1)\times\cdots\times(a_d,b_d)$, 
the boundary $\partial\Omega$ can be partitioned as follows
\begin{eqnarray*}
\partial\Omega=\bigcup_{i=1}^d\left(\Gamma_{i,a}\cup\Gamma_{i,b}\right),
\end{eqnarray*}
where
\begin{eqnarray}\label{eq_Gamma_ab}
\ \ \ \ \Gamma_{i,a}&:=&(a_1,b_1)\times\cdots\times(a_{i-1},b_{i-1})\times\{x_i=a_i\}
\times(a_{i+1},b_{i+1})\times\cdots\times(a_d,b_d),\\
\ \ \ \ \Gamma_{i,b}&:=&(a_1,b_1)\times\cdots\times(a_{i-1},b_{i-1})
\times\{x_i=b_i\}\times(a_{i+1},b_{i+1})\times\cdots\times(a_d,b_d).
\end{eqnarray}
The boundary integrations on $\Gamma_{i,a}$ and $\Gamma_{i,b}$, $i=1,\cdots,d$, 
can be computed separately.
Since $\Gamma_{i,a}$, $\Gamma_{i,b}$, $i=1,\cdots,d$ are
also tensor-type domains, we can use the quadrature scheme introduced 
in Section \ref{section_quad} to calculate the loss function 
(\ref{eq_loss_bd}) with high accuracy and high efficiency, if $g$ has a low-rank structure.

As described in Section \ref{section_alg_hDirichlet}, it is easy to construct the homogeneous
trial function by TNN due to its variable separation property.
Once we obtain $\Psi_1(x;c_1^*,\theta_1^*)$, a similar procedure as Algorithm \ref{Algorithm_1} 
can be used to solve homogeneous Dirichlet boundary problem (\ref{ModelProblem_homoDirichlet_u0}) 
using TNN $\Psi_2$. 
It only needs to change assembling the right-hand side term in step 2 
of Algorithm \ref{Algorithm_1} to be the following form
\begin{eqnarray*}
B_m^{(\ell)}&=&(f,\varphi_m^{(\ell)})-a(\Psi_1(x;c_1^*,\theta_1^*),\varphi_m^{(\ell)}),
\end{eqnarray*}
and the loss function in step 4 of Algorithm \ref{Algorithm_1} is modified 
to be the a posteriori error estimator (\ref{eta_nonhDirichlet}).

\subsection{Algorithm for Neumann boundary value problem}\label{section_alg_Neuamnn}
Similarly, the TNN-based method for the Neumann boundary value problem can be defined 
in Algorithm \ref{Algorithm_2}.
It should be noted that when assembling the right-hand side term in Step 4 of the 
algorithm and computing the a posteriori error estimator in Step 4 of Algorithm 
\ref{Algorithm_2}, we need to compute the integrations 
$(g,\varphi_m^{(\ell)})_{\partial\Omega}$ and $\|g-(\mathcal 
A\nabla\Psi(x;c^{(\ell+1)},\theta^{(\ell)}))\cdot\mathbf n\|_{\partial\Omega}$ 
on the boundary $\partial\Omega$. 
In the same way as the quadrature scheme on $\partial\Omega$ of (\ref{eq_loss_bd}), 
we deal with the boundary integrations involved in Algorithm \ref{Algorithm_2} 
on $\Gamma_{i,0}$ and $\Gamma_{i,1}$, $i=1,\cdots,d$, separately, 
and use the efficient quadrature scheme introduced in Section \ref{section_quad}. 

Unlike the Dirichlet boundary condition, in Algorithm \ref{Algorithm_2}, 
we can treat homogeneous and non-homogeneous Neumann boundary conditions in a uniform way.
We perform the numerical test for a non-homogeneous case in Section \ref{section_test_nonhomoNeumann}.
\begin{algorithm}[htb!]
\caption{TNN-based method for Neumann boundary value problem}\label{Algorithm_2}
\begin{enumerate}
\item Initialization step: Build initial TNN $\Psi(x;c^{(0)},\theta^{(0)})$ as in 
(\ref{def_TNN_normed}), set maximum training steps $M$, learning rate $\gamma$ and $\ell=0$.

\item Define $p$-dimensional space $V_p^{(\ell)}$ as follows
\begin{eqnarray*}
V_{p}^{(\ell)}:=\operatorname{span}\left\{\varphi_{j}(x ; \theta^{(\ell)})
:=\prod_{i=1}^{d}\widehat\phi_{i, j}\left(x_{i} ; \theta_{i}^{(\ell)}\right), j=1, \cdots, p\right\}.
\end{eqnarray*}
Assemble the stiffness matrix $A^{(\ell)}$ and the right-hand side term $B^{(\ell)}$ on $V_{p}^{(\ell)}$.
The entries are defined as follows
\begin{eqnarray*}
A_{m,n}^{(\ell)}&=&a(\varphi_n^{(\ell)},\varphi_m^{(\ell)})=(\mathcal A\nabla\varphi_n^{(\ell)},\nabla\varphi_m^{(\ell)})+(b\varphi_n^{(\ell)},\varphi_m^{(\ell)}),
\ 1\leq m,n\leq p,\\
B_m^{(\ell)}&=&(f,\varphi_m^{(\ell)})-(g,\varphi_m^{(\ell)})_{\partial\Omega},\ 1\leq m\leq p.
\end{eqnarray*}

\item Solve the following linear equation and obtain the solution $c\in\mathbb R^{p\times 1}$
\begin{eqnarray*}
A^{(\ell)}c=B^{(\ell)}.
\end{eqnarray*}
Update the coefficient parameter as $c^{(\ell+1)}=c$.
Then the Galerkin approximation on the space $V_{p}^{(\ell)}$ for 
problem (\ref{weak_form_Neumann}) is $\Psi(x;c^{(\ell+1)},\theta^{(\ell)})$.

\item Compute the a posteriori error estimator (\ref{eta_Neumann}) (loss function)
\begin{eqnarray*}
\mathcal L^{(\ell+1)}(c^{(\ell+1)},\theta^{(\ell)})
:=\eta(\Psi(x;c^{(\ell+1)},\theta^{(\ell)}),\nabla\Psi(x;c^{(\ell+1)},\theta^{(\ell)})).
\end{eqnarray*}

\item Update the neural network parameter $\theta^{(\ell)}$ of TNN as follows 
\begin{eqnarray*}
\theta^{(\ell+1)}=\theta^{(\ell)}-\gamma\frac{\partial\mathcal L^{\ell+1)}}{\partial\theta}(c^{(\ell+1)},\theta^{(\ell)}).
\end{eqnarray*}

\item Set $\ell=\ell+1$ and go to Step 2 for the next step until $\ell=M$.
\end{enumerate}
\end{algorithm}

\subsection{Algorithm for eigenvalue problem}\label{section_alg_eigen}
For the second-order elliptic eigenvalue problem, we are mainly concerned with
obtaining the eigenpair corresponding to the smallest eigenvalue. 
Similarly, the detailed algorithm for the eigenvalue problem can be defined  
in Algorithm \ref{Algorithm_3}, where the homogeneous boundary conditions are treated in the similar way 
as that in Section \ref{section_alg_hDirichlet}.

Since the eigenvalue problem (\ref{Eigen_Prob}) has eigenvalue series (\ref{Eigenvalues}) and 
corresponding eigenfunction  series (\ref{Eigenfunctions}),  in order to get the 
smallest eigenvalue, 
we need to consider how to avoid local minimum cases during the learning process. 
Then for the a posteriori error estimator (\ref{eta_Eigen}) of the eigenvalue problem, 
we have the following equalities
\begin{eqnarray*}
0=\eta(\lambda_1,u_1,\nabla u_1)=\eta(\lambda_2,u_2,\nabla u_2)
=\cdots=\eta(\lambda_k,u_k,\nabla u_k)=\cdots.
\end{eqnarray*}
This means, if the smallest eigenpair in the space $V_p^{(\ell)}$ 
is some $(\lambda_k,u_k)$ with $k>1$,  
Step 5 of Algorithm \ref{Algorithm_3} will not actually update parameter $\theta^{(\ell+1)}$.
As a result, the space $V_p^{(\ell+1)}$ will
stay the same and parameters $c^{(\ell+2)}$ 
will not be updated at the $(\ell+1)$-th learning step. 
This also means that the algorithm converges to 
$(\lambda_k,u_k=\Psi(x;c^{(\ell+1)},\theta^{(\ell)}))$ 
which is not the approximation to the smallest eigenpair.
To avoid this phenomenon, we recommend using the TNN-based method in \cite{WangJinXie} 
as a pre-training method to get an appropriate initial TNN.
Since the eigenvalue problem satisfies $\lambda_1<\lambda_2$ \cite{AndrewsClutterbuck}, 
an appropriate initial TNN can ensure that the optimization process eventually obtain 
an approximation of $(\lambda_1,u_1)$.

\begin{algorithm}[htb!]
\caption{TNN-based method for eigenvalue problem}\label{Algorithm_3}
\begin{enumerate}
\item Initialization step: Build initial TNN $\Psi(x;c^{(0)},\theta^{(0)})$ 
as in (\ref{def_TNN_normed}), set maximum training steps $M$, 
learning rate $\gamma$ and  $\ell=0$.

\item Define $p$-dimensional space $V_p^{(\ell)}$ as follows
\begin{eqnarray*}
V_{0,p}^{(\ell)}:={\rm span}\left\{\varphi_{j}(x;\theta^{(\ell)})
:=\prod_{i=1}^{d}\widehat\phi_{i,j}\left(x_{i};\theta_{i}^{(\ell)}\right)(x_i-a_i)(b_i-x_i), 
j=1, \cdots, p\right\}.
\end{eqnarray*}
Assemble the stiffness matrix $A^{(\ell)}$ and mass matrix $M^{(\ell)}$ on $V_{0,p}^{(\ell)}$ as follows
\begin{eqnarray*}
A_{m,n}^{(\ell)}&=&a(\varphi_n^{(\ell)},\varphi_m^{(\ell)})=(\mathcal A\nabla\varphi_n^{(\ell)},\nabla\varphi_m^{(\ell)})+(b\varphi_n^{(\ell)},\varphi_m^{(\ell)}),
\ \ 1\leq m,n\leq p,\\
M_{m,n}^{(\ell)}&=&(\varphi_n^{(\ell)},\varphi_m^{(\ell)}),\ \ 1\leq m,n\leq p.
\end{eqnarray*}

\item Solve the following eigenvlaue problem to obtain the first 
eigenpair approximation $(\lambda,c)\in\mathbb R\times\mathbb R^{p}$
\begin{eqnarray*}
A^{(\ell)}c=\lambda M^{(\ell)}c.
\end{eqnarray*}
Update the coefficient parameter as $c^{(\ell+1)}=c$.
Then the Galerkin approximation on the space $V_{p}^{(\ell)}$ for eigenvalue problem 
(\ref{weak_form_eigen}) is $(\lambda,\Psi(x;c^{(\ell+1)},\theta^{(\ell)}))$.

\item Compute the a posteriori error estimator (\ref{eta_Eigen}) (loss function)
\begin{eqnarray*}
\mathcal L^{(\ell+1)}(c^{(\ell+1)},\theta^{(\ell)}):=
\eta(\lambda,\Psi(x;c^{(\ell+1)},\theta^{(\ell)}),\nabla\Psi(x;c^{(\ell+1)},\theta^{(\ell)})).
\end{eqnarray*}

\item Update the neural network parameter of TNN as follows 
\begin{eqnarray*}
\theta^{(\ell+1)}=\theta^{(\ell)}-\gamma\frac{\partial\mathcal L^{\ell+1)}}{\partial\theta}(c^{(\ell+1)},\theta^{(\ell)}).
\end{eqnarray*}

\item Set $\ell=\ell+1$ and go to Step 2 for the next step until $\ell=M$.
\end{enumerate}
\end{algorithm}

\section{Numerical examples}\label{Section_Numerical} 
In this section, we provide several examples to validate the efficiency and accuracy of the TNN-based
machine learning methods, Algorithms \ref{Algorithm_1}, \ref{Algorithm_2} and \ref{Algorithm_3}, 
proposed in this paper. All the experiments are done on an NVIDIA Tesla A800 GPU and an Tesla NVIDIA V100 GPU. 

In order to show the convergence behavior and accuracy of the Dirichlet boundary 
value problem and Neumann boundary value problem, we define 
the following errors for the numerical solution $\Psi(x;c^*,\theta^*)$
\begin{eqnarray*}
\widehat e_{L^2}:=\frac{\|u-\Psi(x;c^*,\theta^*)\|_{L^2(\Omega)}}{\|f\|_{L^2(\Omega)}},
\ \ \ \widehat e_{H^1}:=\frac{\left|u-\Psi(x;c^*,\theta^*)\right|_{H^1(\Omega)}}{\left\|f\right\|_{L^2(\Omega)}}.
\end{eqnarray*}
Here $\|\cdot\|_{L^2}$ and $|\cdot|_{H^1}$ denote the $L^2(\Omega)$ norm and the $H^1(\Omega)$ seminorm, respectively.

For the approximate eigenpair approximation $(\lambda^*,\Psi(x;c^*,\theta^*))$ by TNN-based method 
for the eigenvalue problem, we define the $L^2(\Omega)$ projection operator $\mathcal P:H_0^1(\Omega)
\rightarrow {\rm span}\{\Psi(x;c^*,\theta^*)\}$ as follows:
\begin{eqnarray*}
\left\langle\mathcal Pu,v\right\rangle_{L^2}=\left\langle u,v\right\rangle_{L^2}:=\int_\Omega uvdx,
\ \ \ \forall v\in {\rm span}\{\Psi(x;c^*,\theta^*)\}\ \ {\rm for}\ u\in H_0^1(\Omega).
\end{eqnarray*}
And we define the $H^1(\Omega)$ projection operator $\mathcal Q:H_0^1(\Omega)\rightarrow {\rm span}\{\Psi(x;c^*,\theta^*)\}$
as follows:
\begin{eqnarray*}
\left\langle\mathcal Qu,v\right\rangle_{H^1}=\left\langle u,v\right\rangle_{H^1}
:=\int_\Omega\nabla u\cdot\nabla vdx,
\ \ \ \forall v\in {\rm span}\{\Psi(x;c^*,\theta^*)\}\ \ {\rm for}\ u\in H_0^1(\Omega).
\end{eqnarray*}
Then we define the following errors for the approximated eigenvalue $\lambda^*$ and eigenfunction $\Psi(x;\theta^*)$
\begin{eqnarray*}\label{relative_errors}
e_\lambda:=\frac{|\lambda^*-\lambda|}{|\lambda|},\ \ \ e_{L^2}:=\frac{\|u-\mathcal Pu\|_{L^2(\Omega)}}{\|u\|_{L^2(\Omega)}},
\ \ \ e_{H^1}:=\frac{\left|u-\mathcal Qu\right|_{H^1(\Omega)}}{\left|u\right|_{H^1(\Omega)}}.
\end{eqnarray*}
These relative errors are used to test the accuracy of the numerical examples in this section. 

In the implementation of the proposed TNN-based machine learning method,  
the neural networks are trained by Adam optimizer \cite{KingmaAdam} in combination with L-BFGS and the 
automatic differentiation in PyTorch is used to compute the derivatives.

\subsection{High-dimensional elliptic PDEs with homogeneous Dirichlet boundary condition}
We consider the following boundary value problem with the homogeneous Dirichlet boundary condition: 
Find $u$ such that
\begin{eqnarray*}\label{ex_Dirichlet}
\left\{
\begin{aligned}
-\Delta u&=(d+3)\pi^2\sum_{k=1}^d\sin(2\pi x_k)\cdot\prod_{i\neq k}^d\sin(\pi x_i),\ \ \ &x\in&(-1,1)^d,\\
u&=0,\ \ \ &x\in&\partial[-1,1]^d.
\end{aligned}
\right.
\end{eqnarray*}
It is easy to know the exact solution is 
$u(x)=\sum_{k=1}^d\sin(2\pi x_k)\cdot\prod_{i\neq k}^d\sin(\pi x_i)$.

Notice that in this example, we set $b(x) = 0$. We employ Algorithm \ref{Algorithm_1} as well, 
but in Step 4, we use the following loss function instead:
\begin{eqnarray*}
\mathcal L^{(\ell+1)}(c^{(\ell+1)},\theta^{(\ell)}):= \left\|f + \Delta\Psi(x;c^{(\ell+1)},\theta^{(\ell)})\right\|_0.
\end{eqnarray*}
The corresponding a posteriori error estimate can be deduced 
in the same way as Theorem \ref{Theorem_hDirichlet} 
using Lemma \ref{lemma_Green}, Cauchy-Schwarz inequality 
and Poincar\'e inequality. We omit the proof here.

For the cases $d=5,10,20$, we choose the subnetworks of the TNN 
for each dimension to be FNNs. 
Each FNN has three hidden layers and each hidden layer has $100$ neurons.
Then, the activation function is chosen as the sine function in each hidden layer.
The rank parameter for the TNN is set to $p=50$ in each case.
We use the method described in Section \ref{section_alg_hDirichlet} 
to guarantee the homogeneous boundary condition.

In the computation of the integrations in the loss function, 
we divide each $\Omega_i=(-1,1)$ into $200$ subintervals, 
and choose 16 Gauss points in each subinterval.
The Adam optimizer is employed with a learning rate $0.003$ in the first $50000$ epochs
and then the L-BFGS with a learning rate $1$ in the subsequent $10000$ 
steps to produce the final result.
The corresponding numerical results are shown in Table \ref{table_dirichlet}.
And Figure \ref{fig_dirichlet} shows the change of relative errors $e_{L^2}$ and $e_{H^1}$ 
in the training process. 
\begin{table}[htb!]
\caption{Errors of homogeneous Dirichlet boundary value problem for $d=5,10,20$.}\label{table_dirichlet}
\begin{center}
\begin{tabular}{ccccc}
\hline
$d$&   $e_{L^2}$&   $e_{H^1}$\\
\hline
5&      1.089e-07&   1.706e-06\\
10&     1.040e-07&   1.701e-06\\
20&     8.646e-08&   1.812e-06\\
\hline
\end{tabular}
\end{center}
\end{table}

%

\begin{figure}[htb]
\centering
\includegraphics[width=4.25cm,height=4cm]{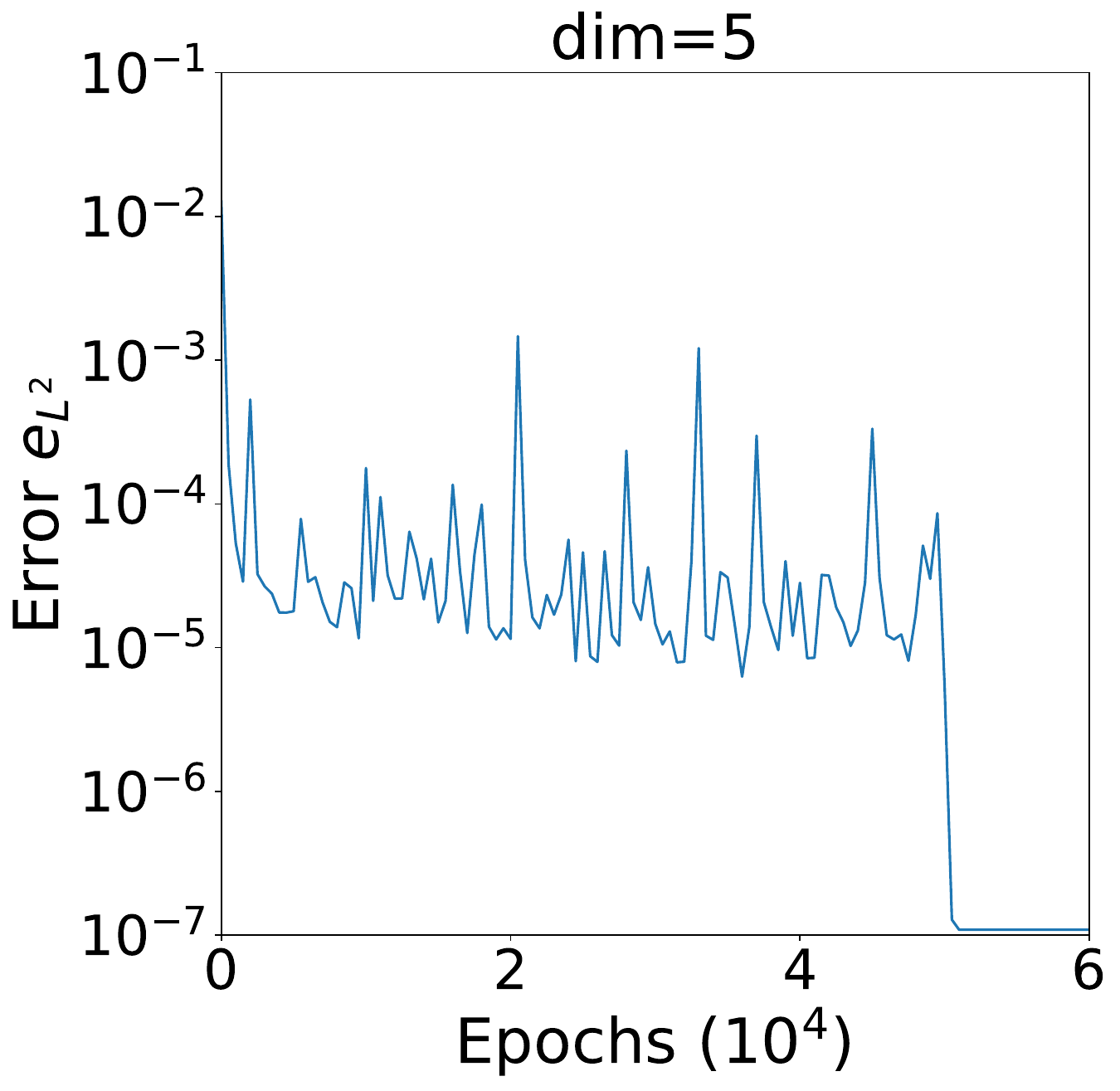}
\includegraphics[width=4.25cm,height=4cm]{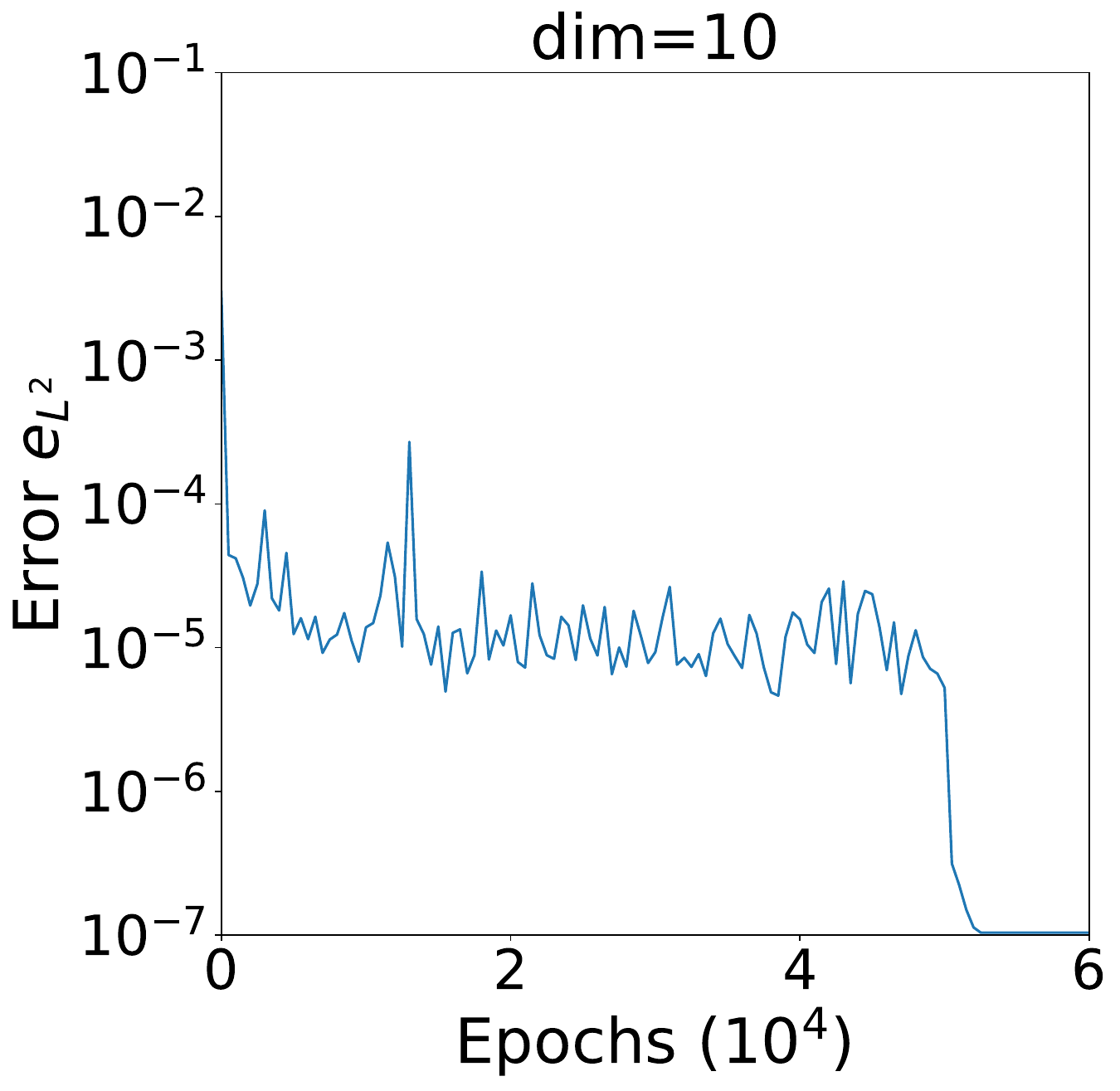}
\includegraphics[width=4.25cm,height=4cm]{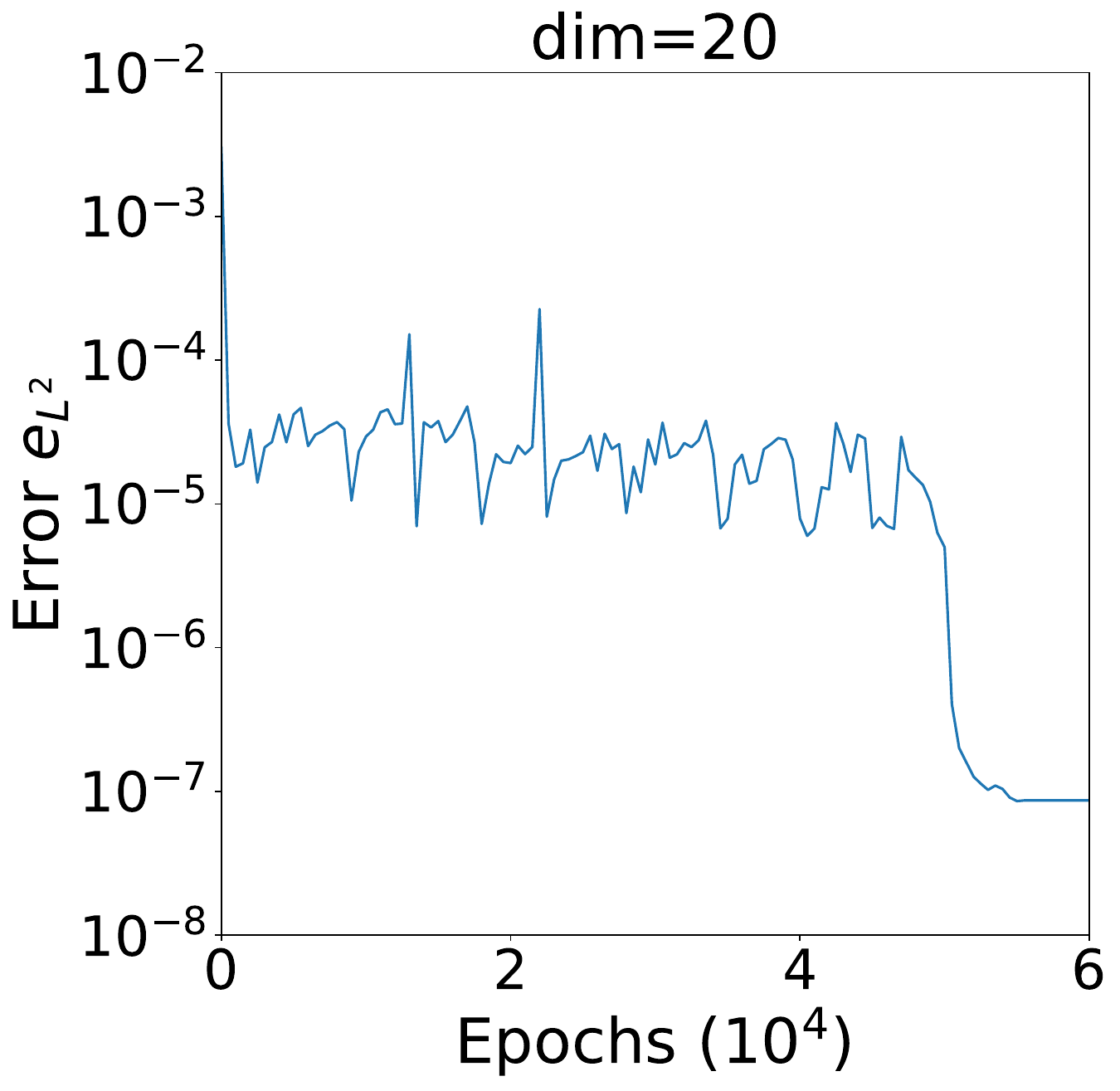}\\
\includegraphics[width=4.25cm,height=4cm]{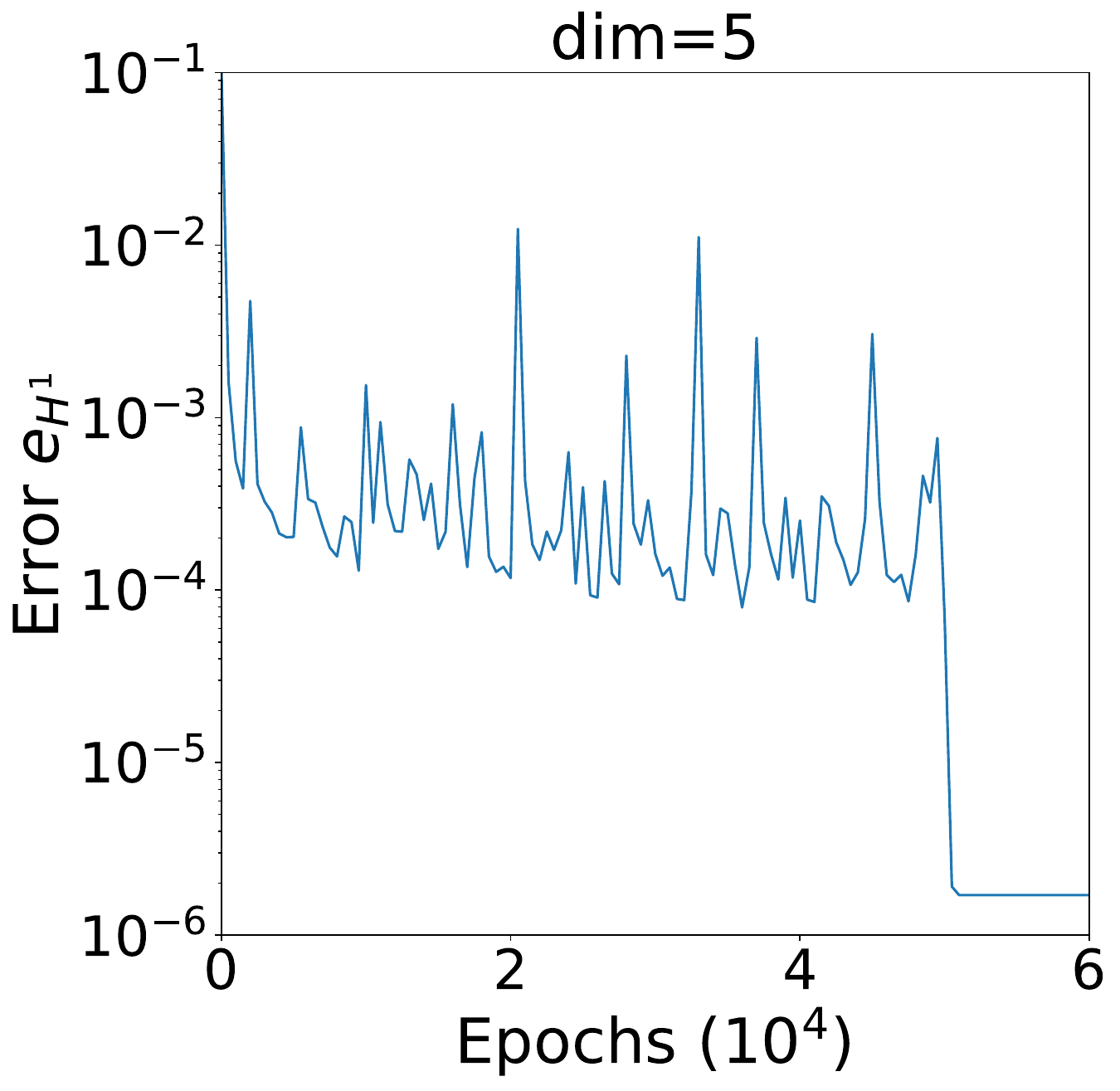}
\includegraphics[width=4.25cm,height=4cm]{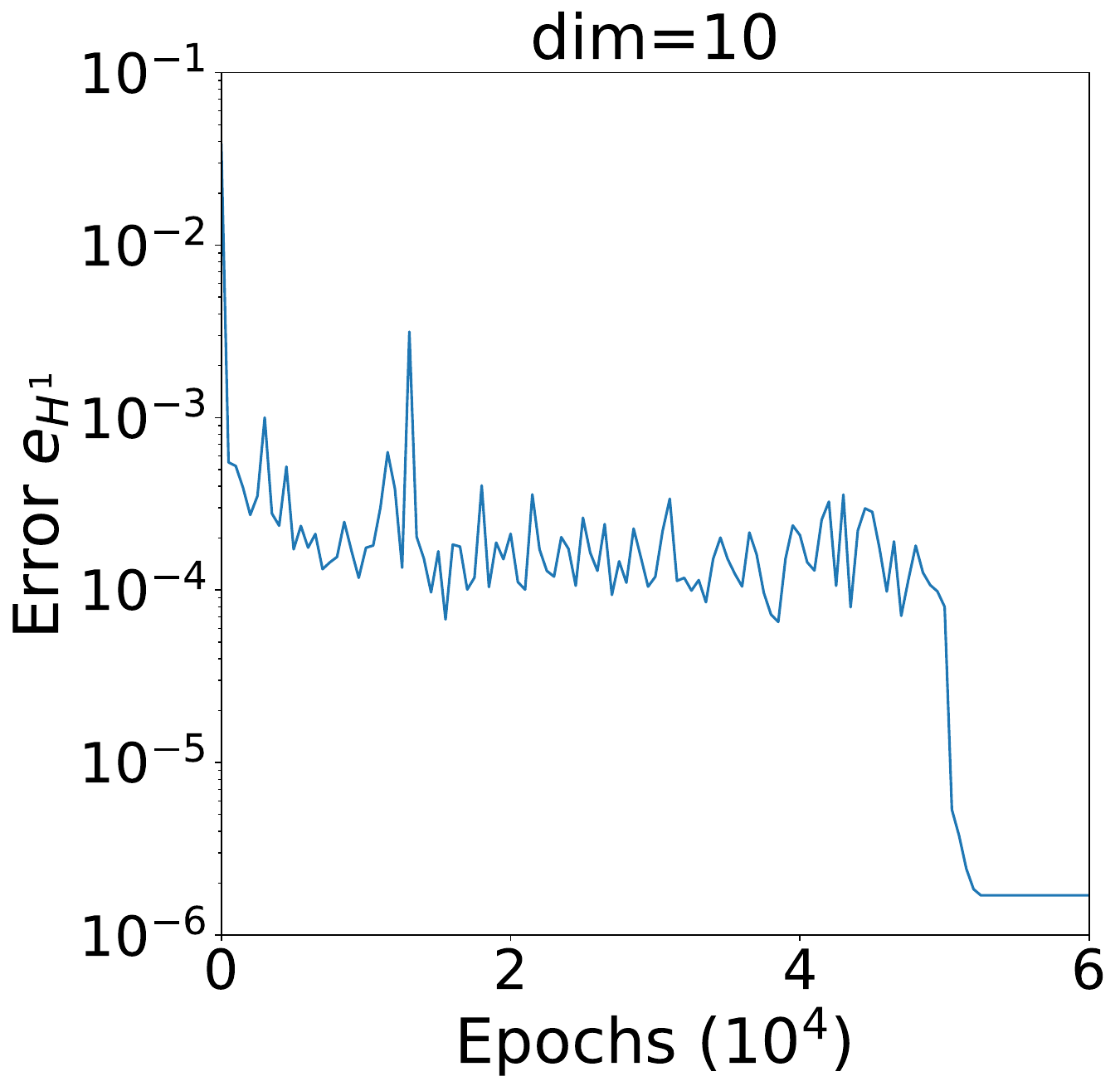}
\includegraphics[width=4.25cm,height=4cm]{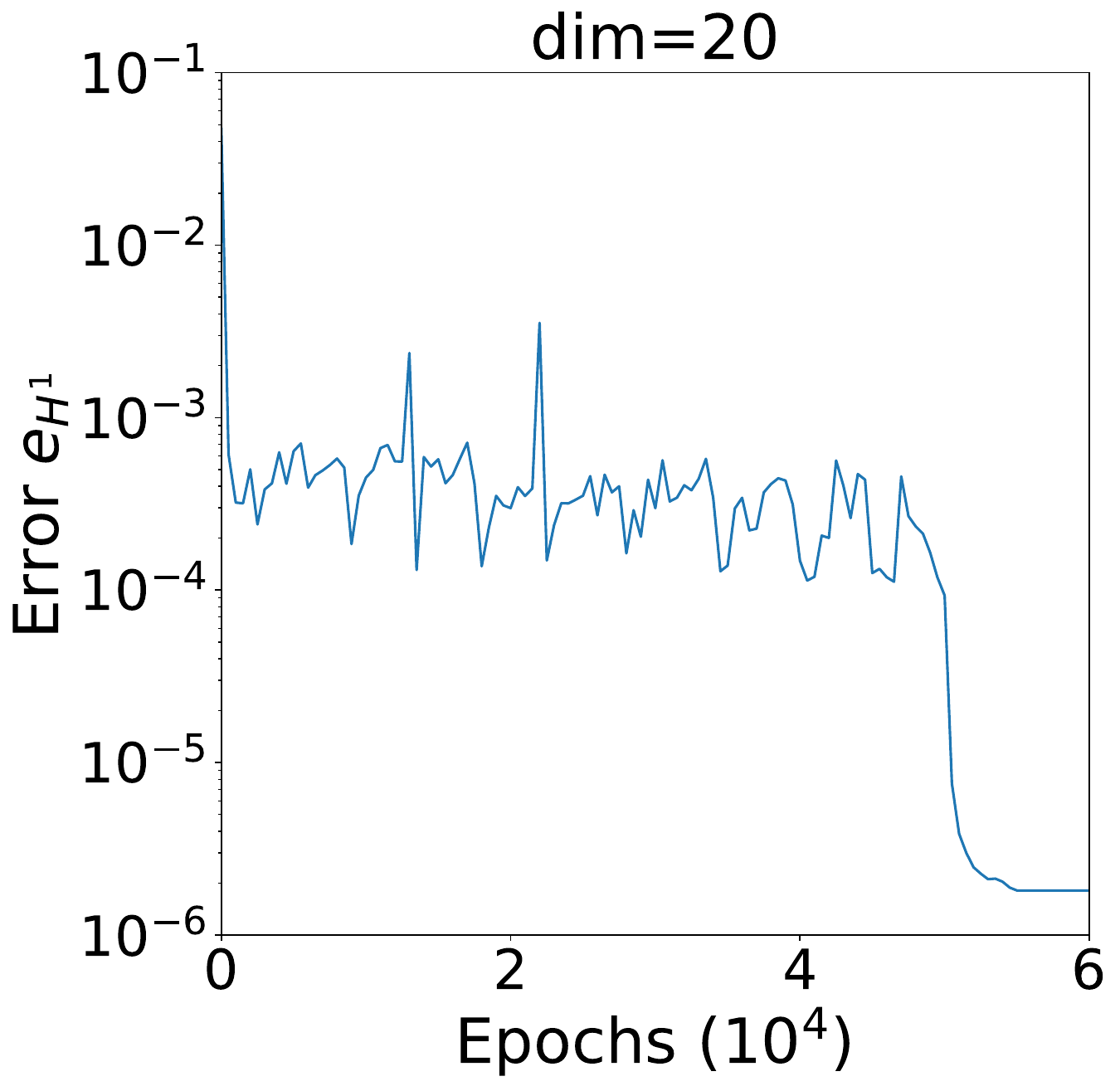}\\
\caption{Relative errors during the training process for the homogeneous 
Dirichlet boundary problem: $d=5$, $10$, and $20$. The upper row shows 
the relative $L^2(\Omega)$ errors, and the down row shows the 
relative $H^1(\Omega)$ errors of solution approximations.}\label{fig_dirichlet}
\end{figure}

\subsection{High-dimensional elliptic PDEs with non-homogeneous Dirichlet boundary condition}
We consider the following boundary value problem with the non-homogeneous 
Dirichlet boundary condition: Find $u$ such that
\begin{eqnarray}\label{ex_nonhomo_Dirichlet}
\left\{
\begin{aligned}
-\Delta u&=\frac{\pi^2}{4}\sum_{i=1}^d\sin\Big(\frac{\pi}{2}x_i\Big),\ \ \ &x\in&(0,1)^d,\\
u&=\sum_{i=1}^d\sin\Big(\frac{\pi}{2}x_i\Big),\ \ \ &x\in&\partial[0,1]^d.
\end{aligned}
\right.
\end{eqnarray}
It is easy to know the problem (\ref{ex_nonhomo_Dirichlet}) has the exact solution
$u(x)=\sum_{i=1}^d\sin\Big(\frac{\pi}{2}x_i\Big)$ in $\Omega=(0,1)^d$.

We set up two TNNs $\Psi_1(x;c_1,\theta_1)$ and $\Psi_2(x;c_2,\theta_2)$, 
where $\Psi_1$ is used to fit the non-homogeneous Dirichlet boundary conditions, 
and $\Psi_2$ is used to solve the deduced homogeneous Dirichlet boundary problem.
For the cases $d=5,10,20$, we choose the subnetworks of $\Psi_1$ and $\Psi_2$ 
for each dimension to be FNNs. Each FNN has three hidden layers and each hidden layer has $50$ neurons.
Then, the activation function is chosen as the sine function in each hidden layer.
The rank parameter for the TNN is set to be $p=20$ in each case.

In computing the integrations of the loss function, 
we divide each $\Omega_i=(0,1)$ into $10$ subintervals and choose 16 Gauss points in each subinterval.
During the training process of TNNs $\Psi_1$ and TNN $\Psi_2$,
the Adam optimizer is employed with a learning rate $0.003$ for the first $20000$ epochs
and then the L-BFGS with a learning rate $0.1$ for the subsequent $5000$ steps 
to produce the final result. 
The corresponding numerical results are shown in Table \ref{table_nonhomo_Dirichlet}, 
where 
\begin{eqnarray*}
e_{\rm bd}=\frac{\|\Psi_1-g\|_{0,\partial\Omega}}{\|g\|_{0,\partial\Omega}},\ \ \ e_{L^2}=\frac{\|\Psi_2+\Psi_1-u\|_{0,\Omega}}{\|f\|_{0,\Omega}},\ \ \ e_{H^1}=\frac{|\Psi_2+\Psi_1-u|_{1,\Omega}}{\|f\|_{0,\Omega}}.
\end{eqnarray*}

\begin{table}[htb!]
\caption{Errors of non-homogeneous Dirichlet boundary value problem for $d=5,10,20$.}\label{table_nonhomo_Dirichlet}
\begin{center}
\begin{tabular}{ccccc}
\hline
$d$&   $e_{\rm bd}$&   $e_{L^2}$&   $e_{H^1}$\\
\hline
5&     1.414e-06&    3.154e-07&   5.862e-06\\
10&    1.201e-06&    3.532e-07&   7.286e-06\\
20&    1.824e-06&    6.340e-07&   1.351e-05\\
\hline
\end{tabular}
\end{center}
\end{table}

\subsection{High-dimensional elliptic PDEs with non-homogeneous Neumann boundary condition}\label{section_test_nonhomoNeumann}
Here, we consider the following boundary value problem with the non-homogeneous Neumann boundary 
value problem: Find $u$ such that
\begin{eqnarray*}
\left\{
\begin{aligned}
-\Delta u+\pi^2u&=2\pi^2\sum_{i=1}^d\sin(\pi x_i),\ \ \ &x\in&(0,1)^d,\\
\frac{\partial u}{\partial \mathbf n}&=-\pi\cos(\pi x_i),\ \ \ &x\in&\Gamma_{i,a},\ \ \ i=1,\cdots,d,\\
\frac{\partial u}{\partial \mathbf n}&=\pi\cos(\pi x_i),\ \ \ &x\in&\Gamma_{i,b},\ \ \ i=1,\cdots,d,
\end{aligned}
\right.
\end{eqnarray*}
where $\Gamma_{i,a}$ and $\Gamma_{i,b}$ are defined as (\ref{eq_Gamma_ab}) with $a_i=0$ and $b_i=1$. 
The exact solution is $u(x)=\sum_{i=1}^d\sin(\pi x_i)$.  
We test high-dimensional cases  $d=5,10,20$, respectively.
Each subnetwork of TNN is a FNN with $3$ hidden layers and each hidden 
layer has $100$ neurons. The activation function is chosen as $\sin(x)$ and the rank $p$ 
is set to be $100$ for each case.
To calculate the integration in the loss function, 
we decompose each $\Omega_i$ ($i=1, \cdots, d$) into $100$ equal 
subintervals and choose $16$ Gauss points on each subinterval.
The a posteriori error estimator on the subspace $V_p$ is chosen 
as the loss function during the training process.
The Adam optimizer is employed with a learning rate of $0.003$ for the first $50000$ training steps.  
Then, the L-BFGS method with a learning rate of $1$ is used for the following $10000$ steps. 
Table \ref{table_nonhomoNeumann} lists the corresponding final errors for different dimensional cases. 
Figure \ref{fig_nonhomoNeumann} shows the change of relative 
errors $e_{L^2}$ and $e_{H^1}$ during the training process. 

\begin{table}[htb!]
\caption{Errors of non-homogeneous Neumann boundary value problem for $d=5,10,20$.}
\label{table_nonhomoNeumann}
\begin{center}
\begin{tabular}{ccccc}
\hline
$d$&   $e_{L^2}$&   $e_{H^1}$\\
\hline
5 &      1.903e-07&   2.395e-06\\
10&     3.554e-07&   3.864e-06\\
20&     2.413e-07&   2.731e-06\\
\hline
\end{tabular}
\end{center}
\end{table}
\begin{figure}[htb]
\centering
\includegraphics[width=4.25cm,height=4cm]{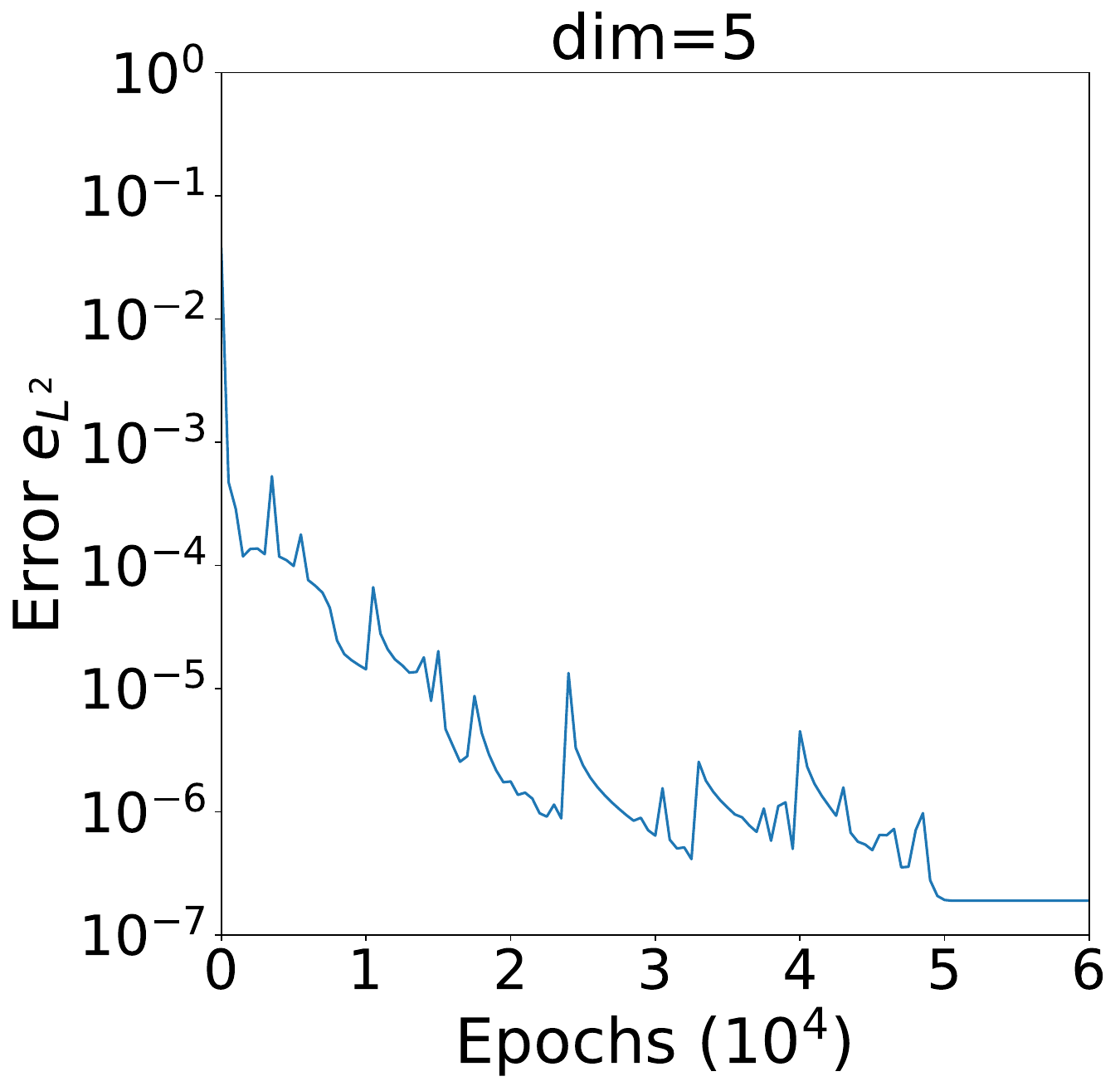}
\includegraphics[width=4.25cm,height=4cm]{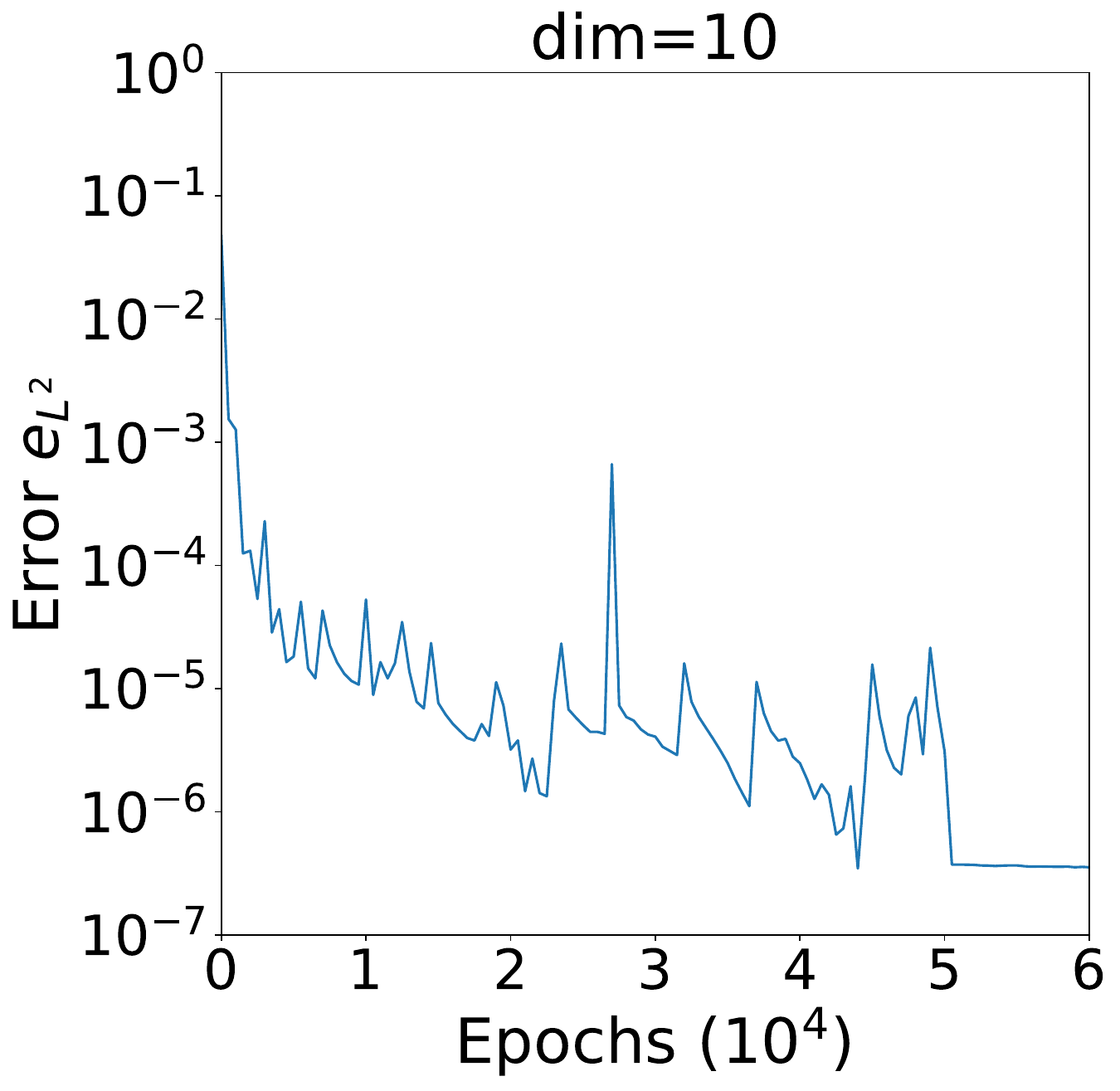}
\includegraphics[width=4.25cm,height=4cm]{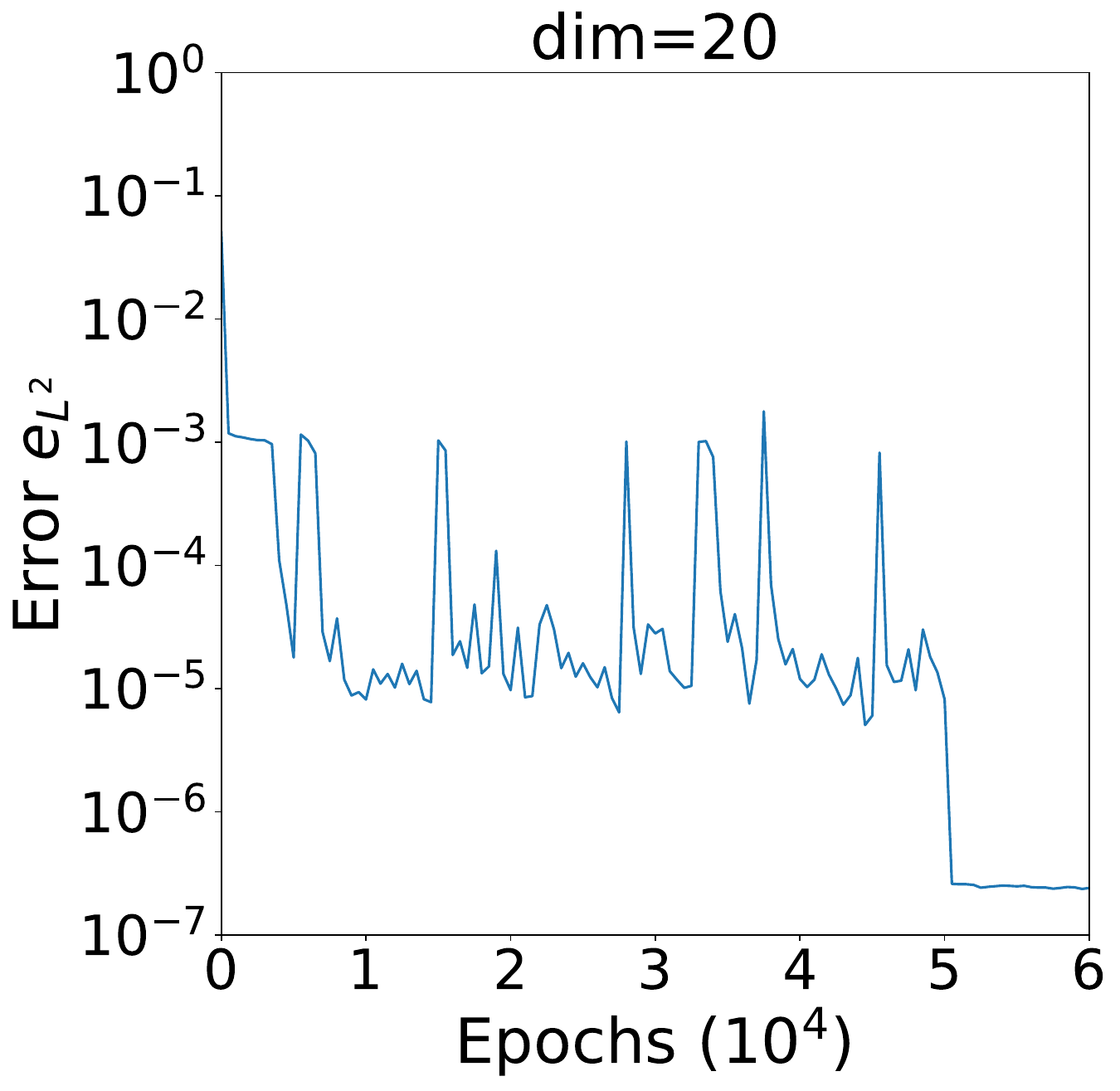}\\
\includegraphics[width=4.25cm,height=4cm]{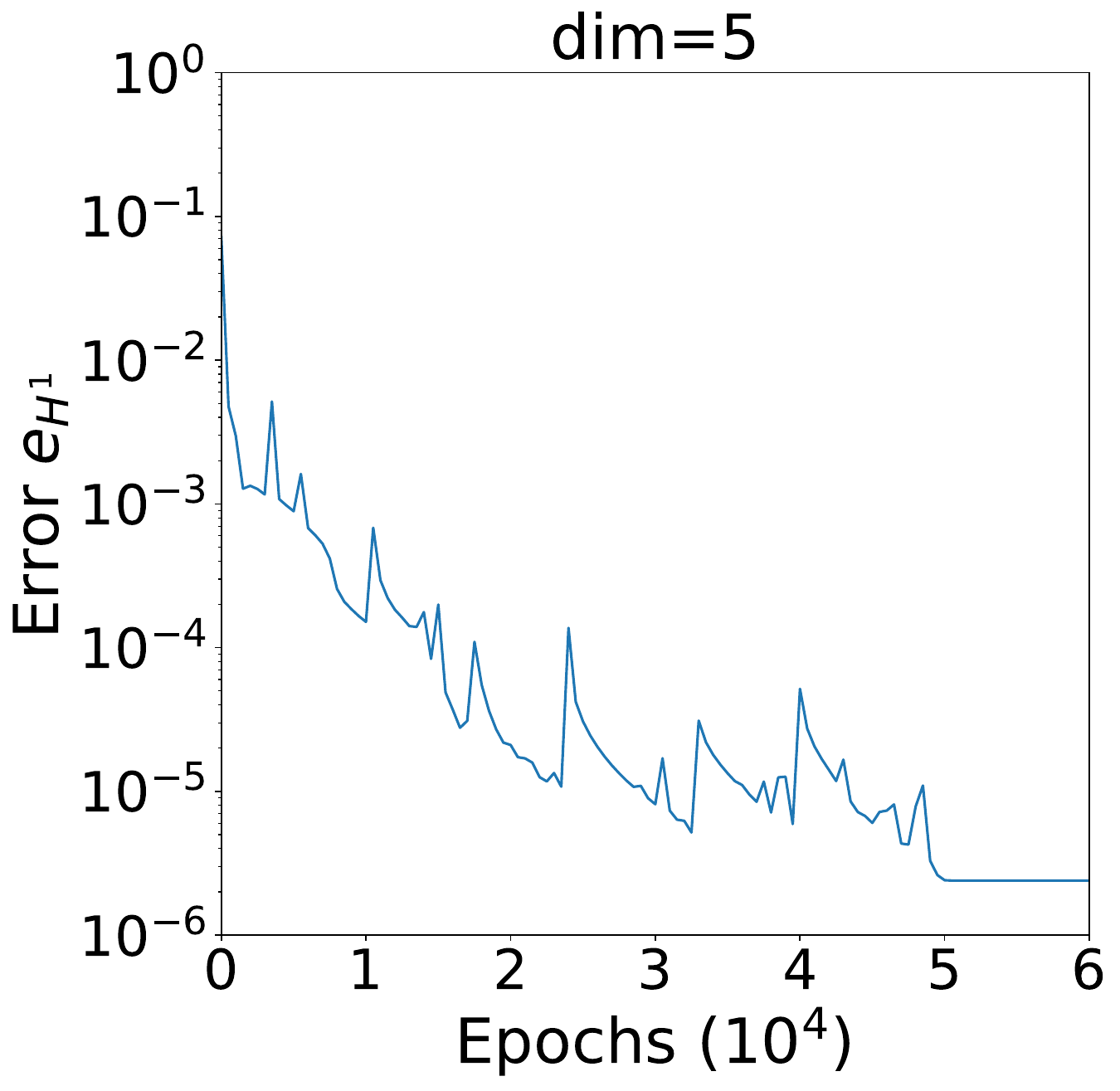}
\includegraphics[width=4.25cm,height=4cm]{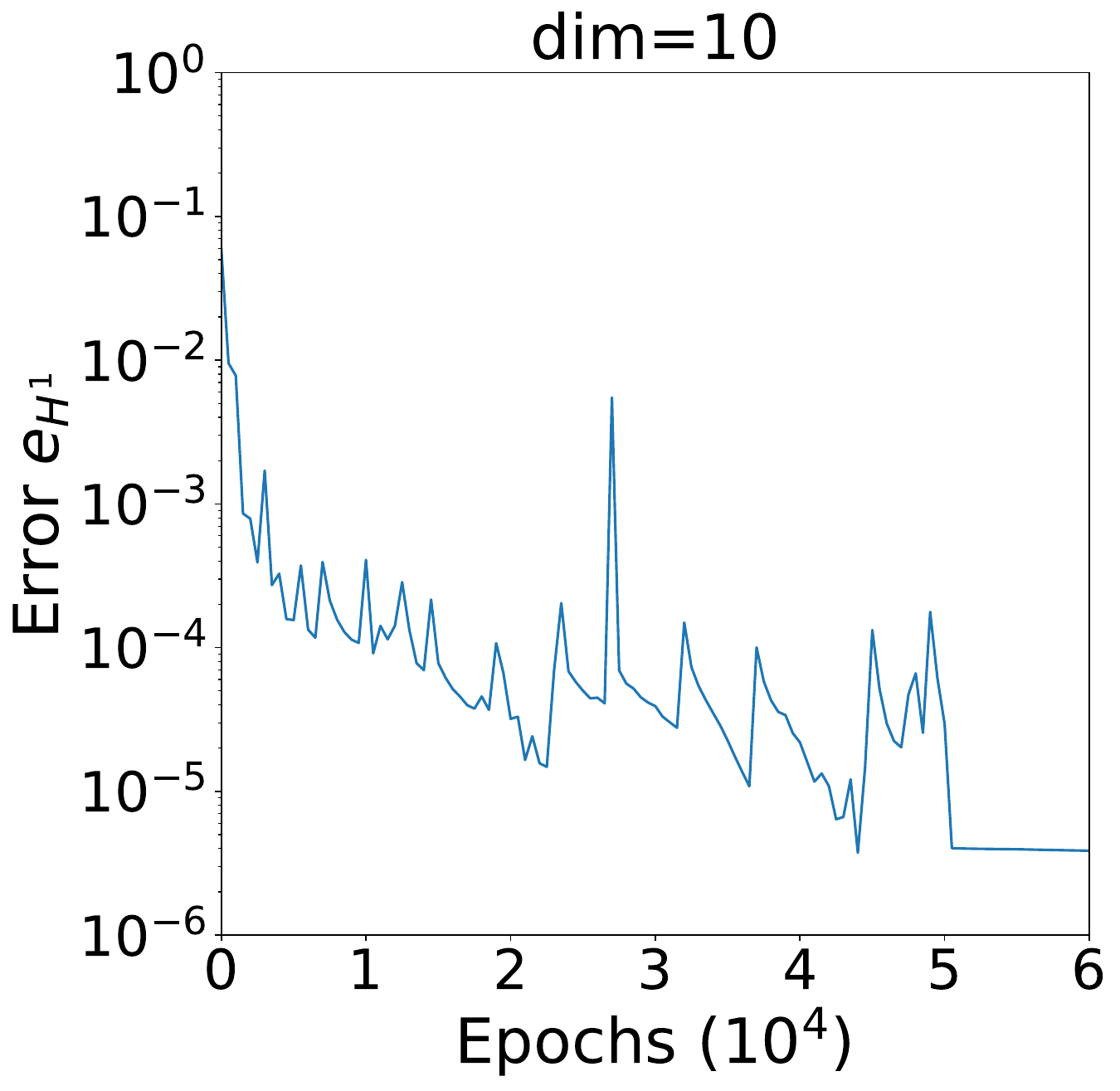}
\includegraphics[width=4.25cm,height=4cm]{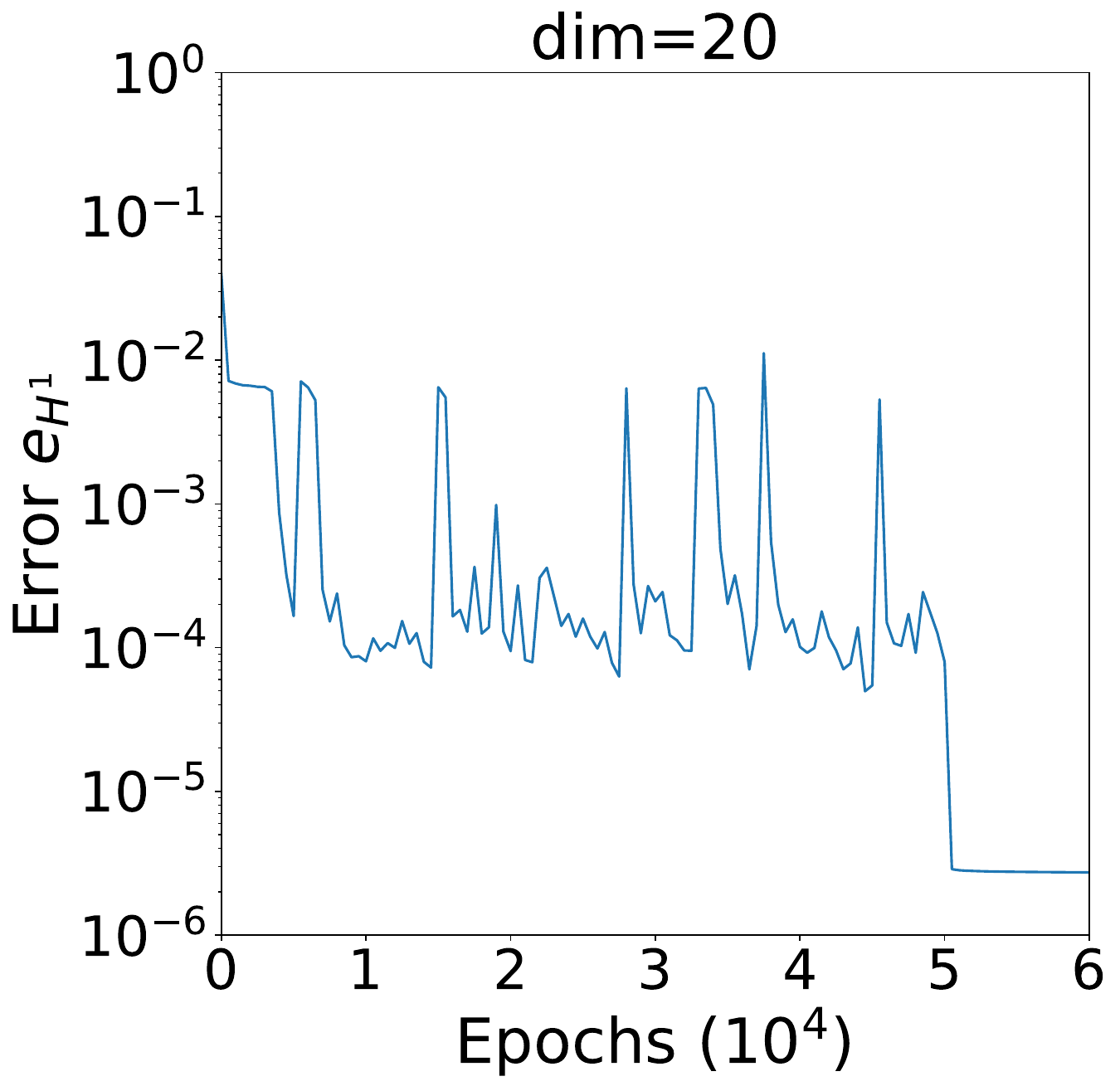}\\
\caption{Relative errors during the training process for non-homogeneous Neumann 
boundary problems: $d=5$, $10$, and $20$. The upper row shows 
the relative $L^2(\Omega)$ errors and the down row shows the 
relative $H^1(\Omega)$ errors of solution approximations.}\label{fig_nonhomoNeumann}
\end{figure}

\subsection{Eigenvalue problem of Laplace operator}
Now, we come to consider solving the following eigenvalue problem: 
Find $(\lambda,u)\in \mathbb R\times H_0^1(\Omega)$ such that
\begin{eqnarray*}
\left\{
\begin{aligned}
-\Delta u&=\lambda u,\ \ \ &x\in&(0,1)^d,\\
u&=0,\ \ \ &x\in&\partial[0,1]^d.
\end{aligned}
\right.
\end{eqnarray*}
The exact smallest eigenpair is
$\lambda=d\pi^2$ and $u(x)=\prod_{i=1}^d\sin(\pi x_i)$.
Here, we test high-dimensional cases with $d=5,10,20$, respectively. 
In this example, each subnetwork of TNN is an FNN with three hidden layers and 
each hidden layer has $100$ neurons. Here, $\sin(x)$ is selected as the 
activation function and the rank $p$ 
is set to be $50$. Quadrature scheme for the integrations in the loss function 
is obtained by decomposing each $\Omega_i$ ($i=1, \cdots, d$) into $100$ equal 
subintervals and choosing $16$ Gauss points on each subinterval. 
During the training process,  a posterior error estimator on the subspace $V_p$ 
is chosen as the loss function. 
The Adam optimizer is employed with a learning rate of $0.003$ for the first $50000$ 
training steps.  
Then, the L-BFGS method with a learning rate of $1$ is used for the following $10000$ steps. 
Table \ref{table_laplace} lists the corresponding final errors for different dimensional cases.  
We can find that the proposed method in this paper has better accuracy than that in \cite{WangJinXie}. 
\begin{table}[htb!]
\caption{Errors of Laplace eigenvalue problem for $d=5,10,20$.}\label{table_laplace}
\begin{center}
\begin{tabular}{ccccc}
\hline
$d$&   $e_{\lambda}$&   $e_{L^2}$&   $e_{H^1}$\\
\hline
5&   1.339e-14&   2.980e-08&   1.215e-07\\
10&   1.656e-14&   9.541e-08&   1.588e-07\\
20&   3.744e-14&   1.626e-07&   2.509e-07\\
\hline
\end{tabular}
\end{center}
\end{table}
Figure \ref{fig_laplace_high} shows the relative errors $e_\lambda$, $e_{L^2}$ and $e_{H^1}$ 
versus the number of epochs. 
We can find that the TNN method has almost the same convergence behaviors for different dimensions.
\begin{figure}[htb]
\centering
\includegraphics[width=4.25cm,height=4cm]{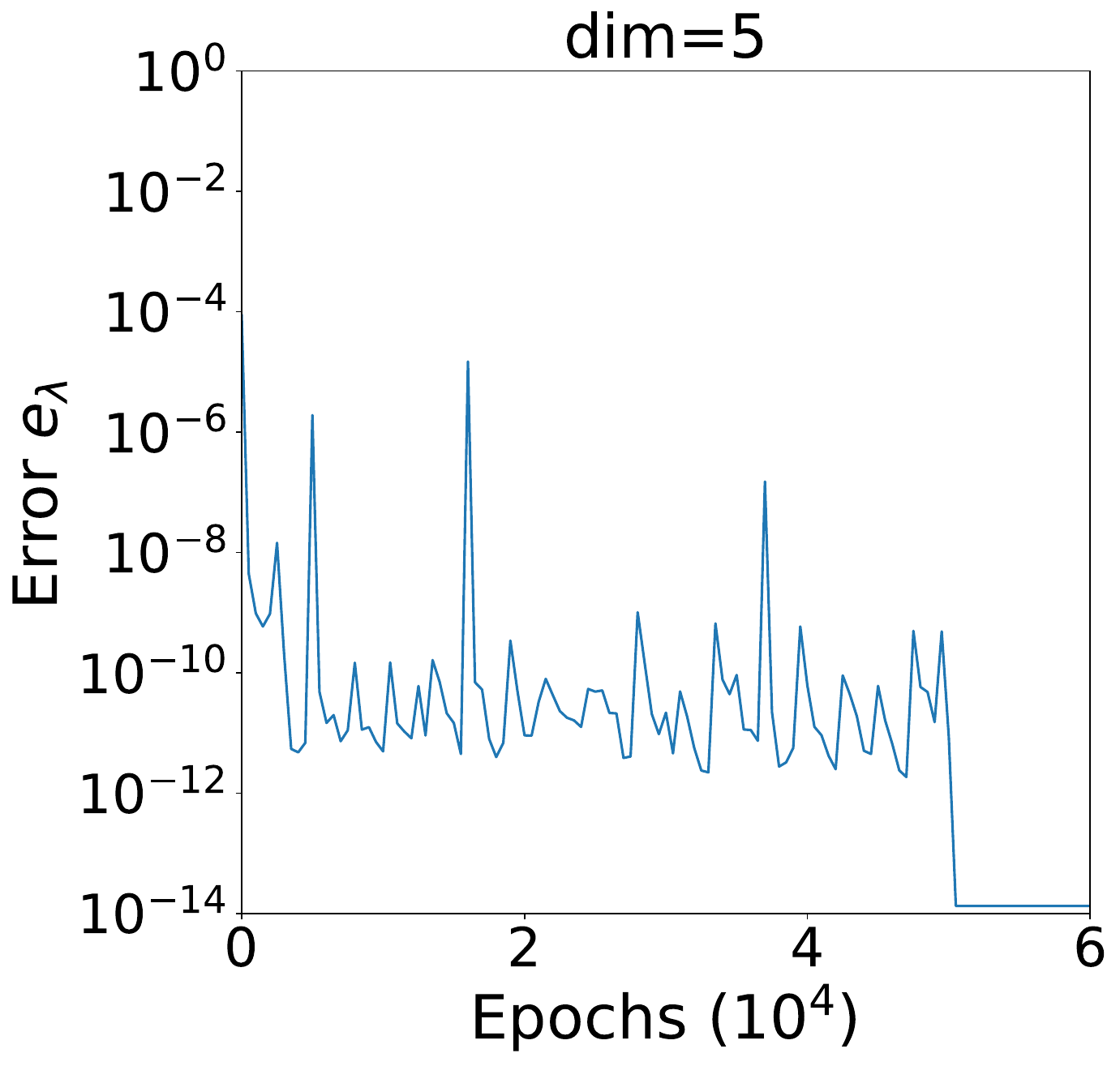}
\includegraphics[width=4.25cm,height=4cm]{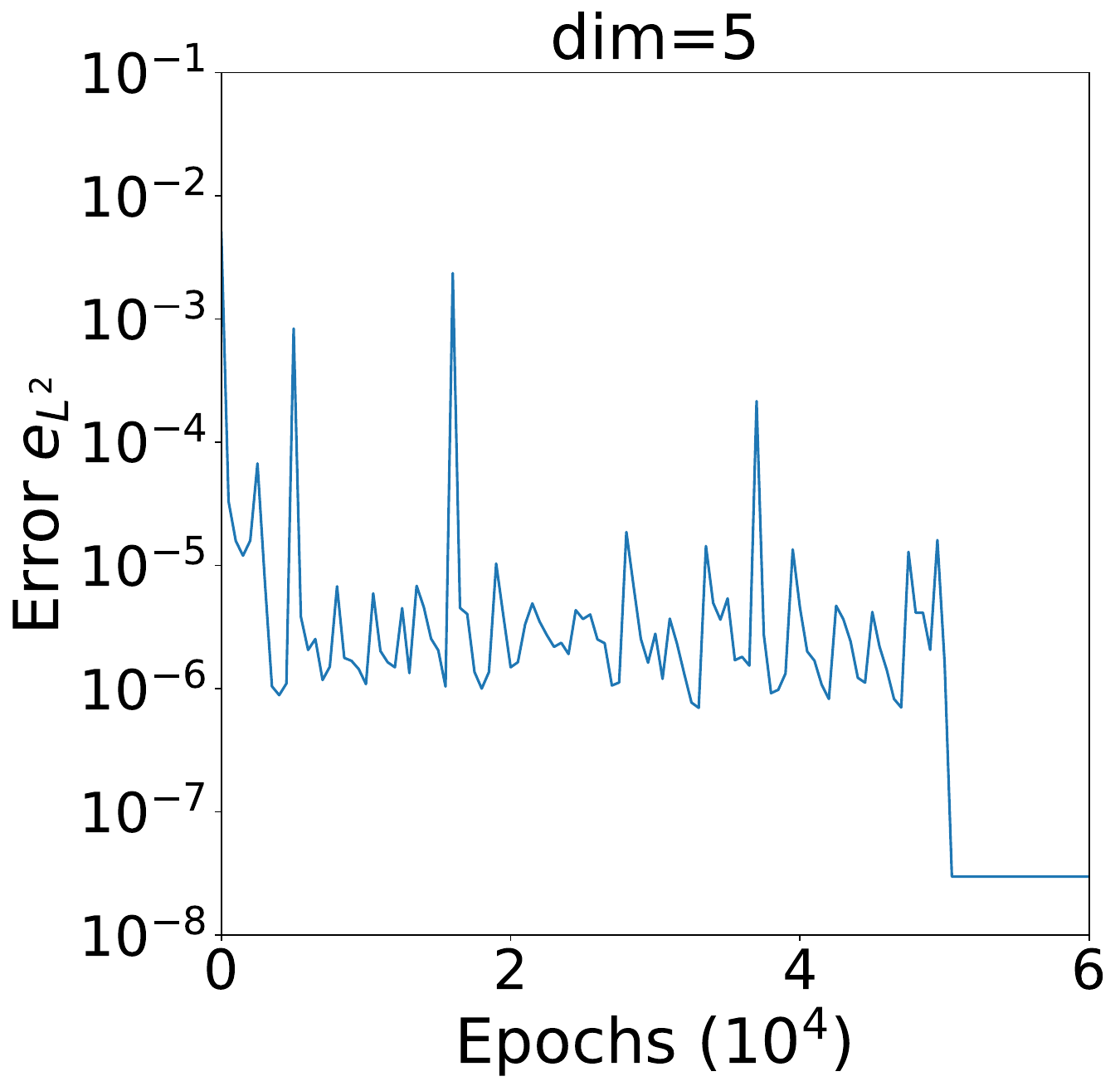}
\includegraphics[width=4.25cm,height=4cm]{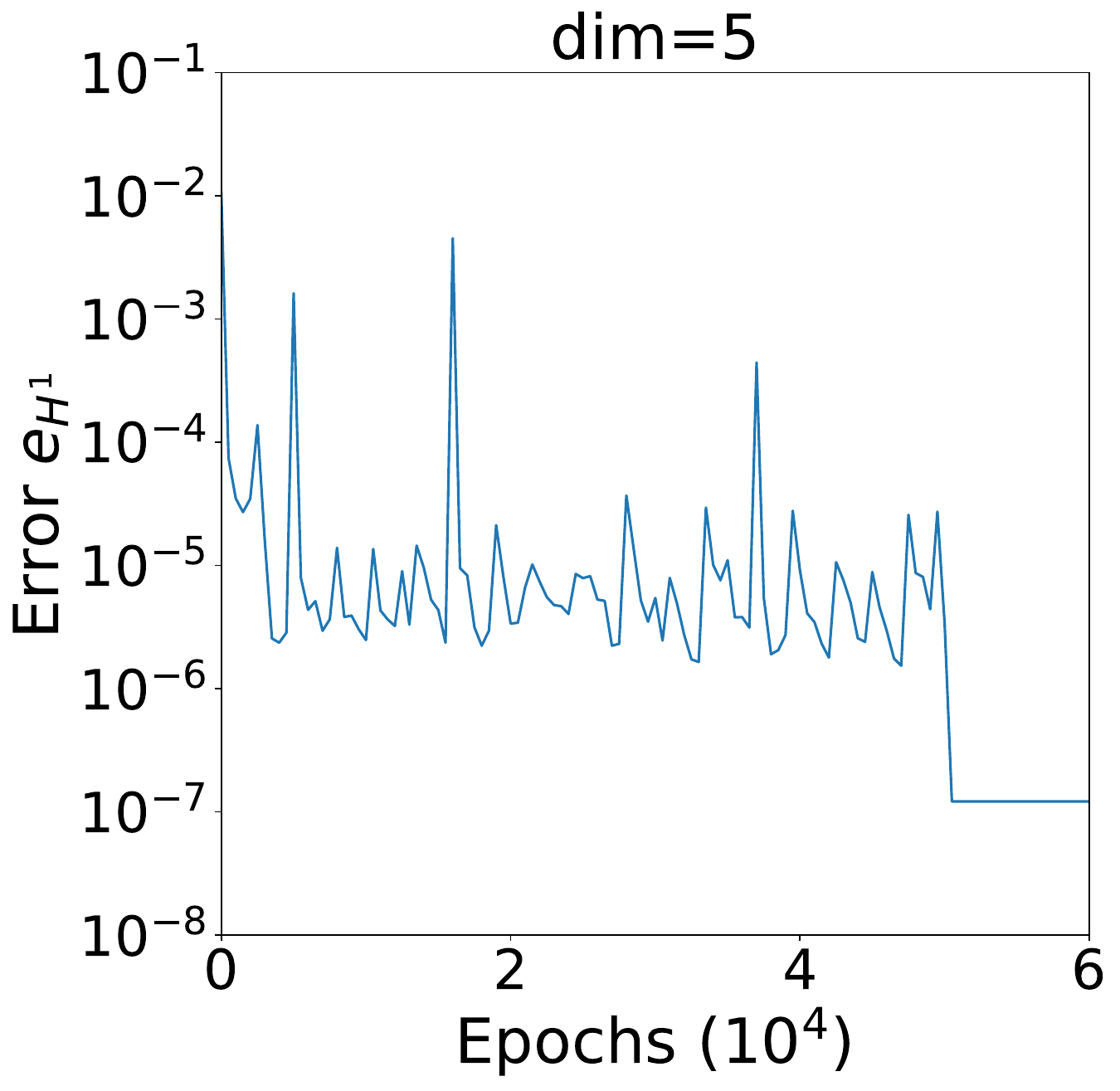}\\
\includegraphics[width=4.25cm,height=4cm]{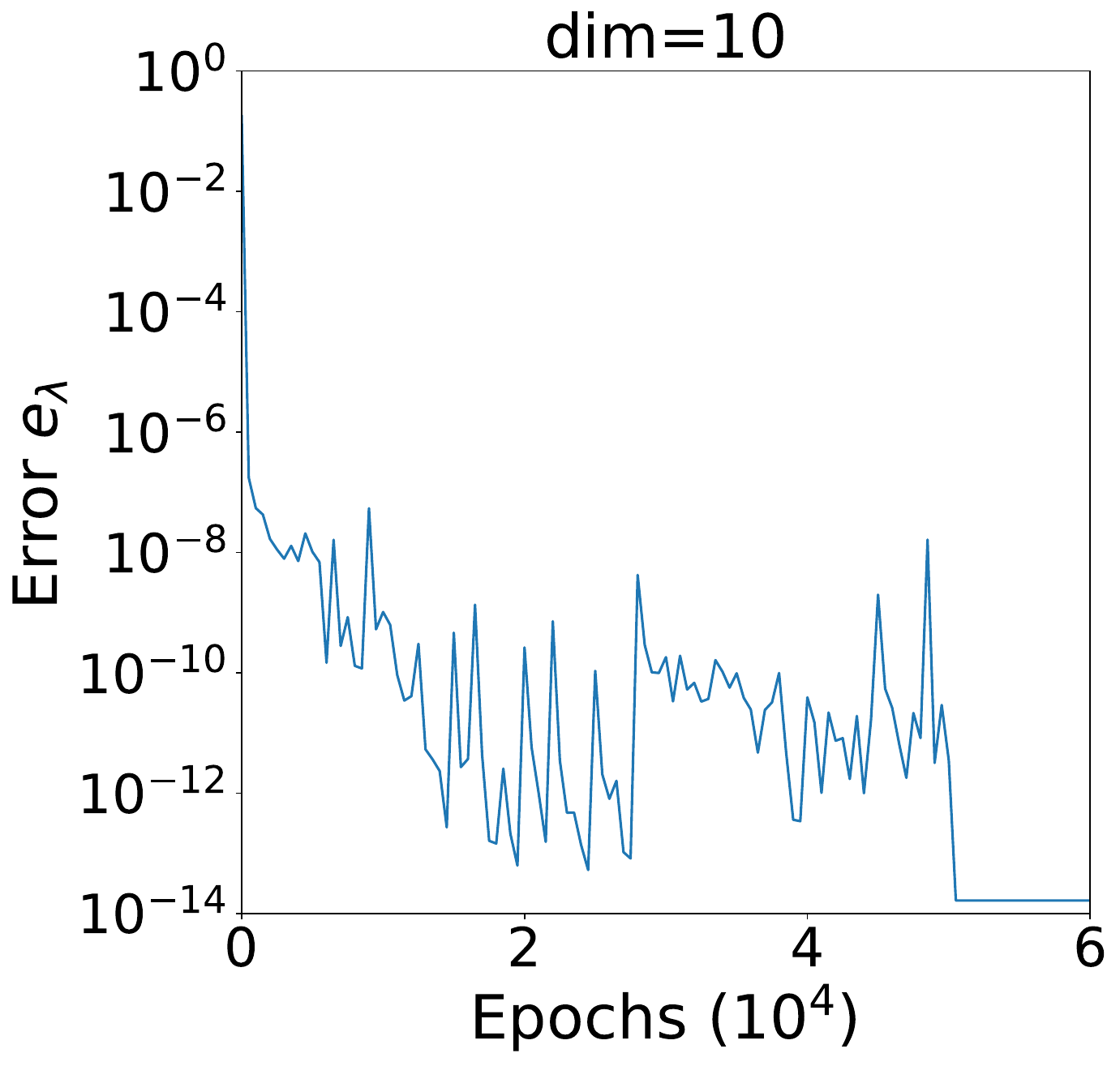}
\includegraphics[width=4.25cm,height=4cm]{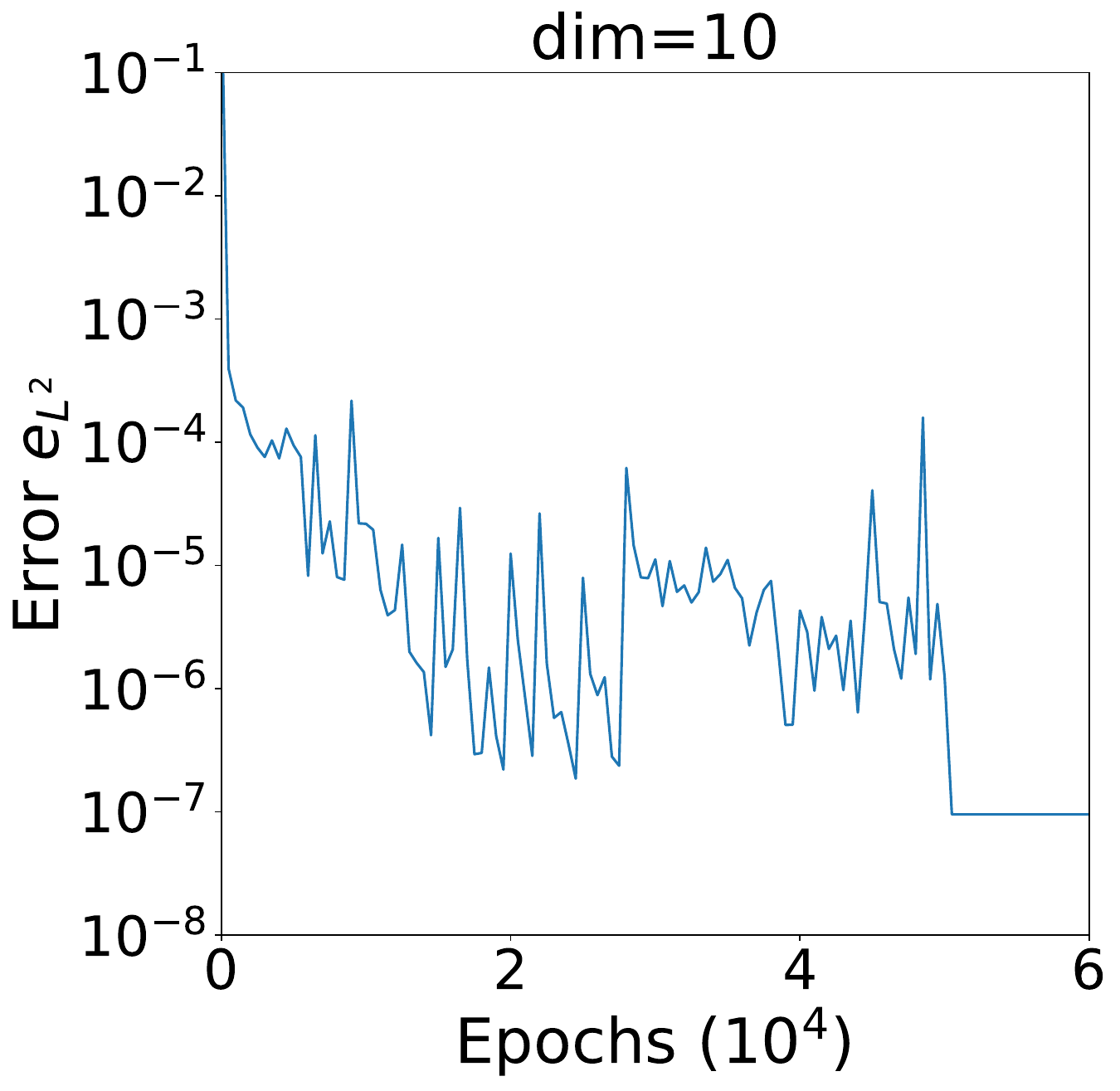}
\includegraphics[width=4.25cm,height=4cm]{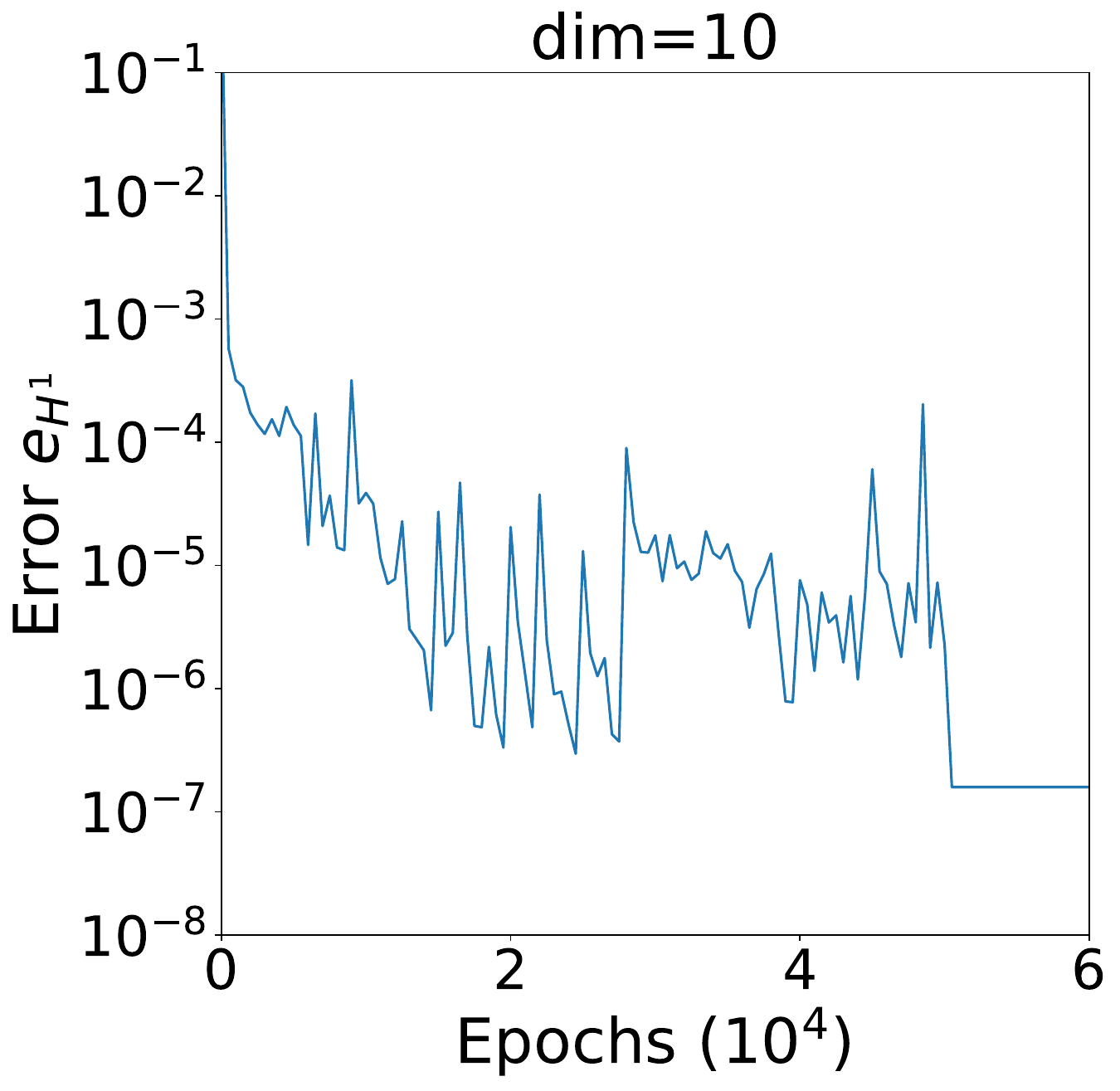}\\
\includegraphics[width=4.25cm,height=4cm]{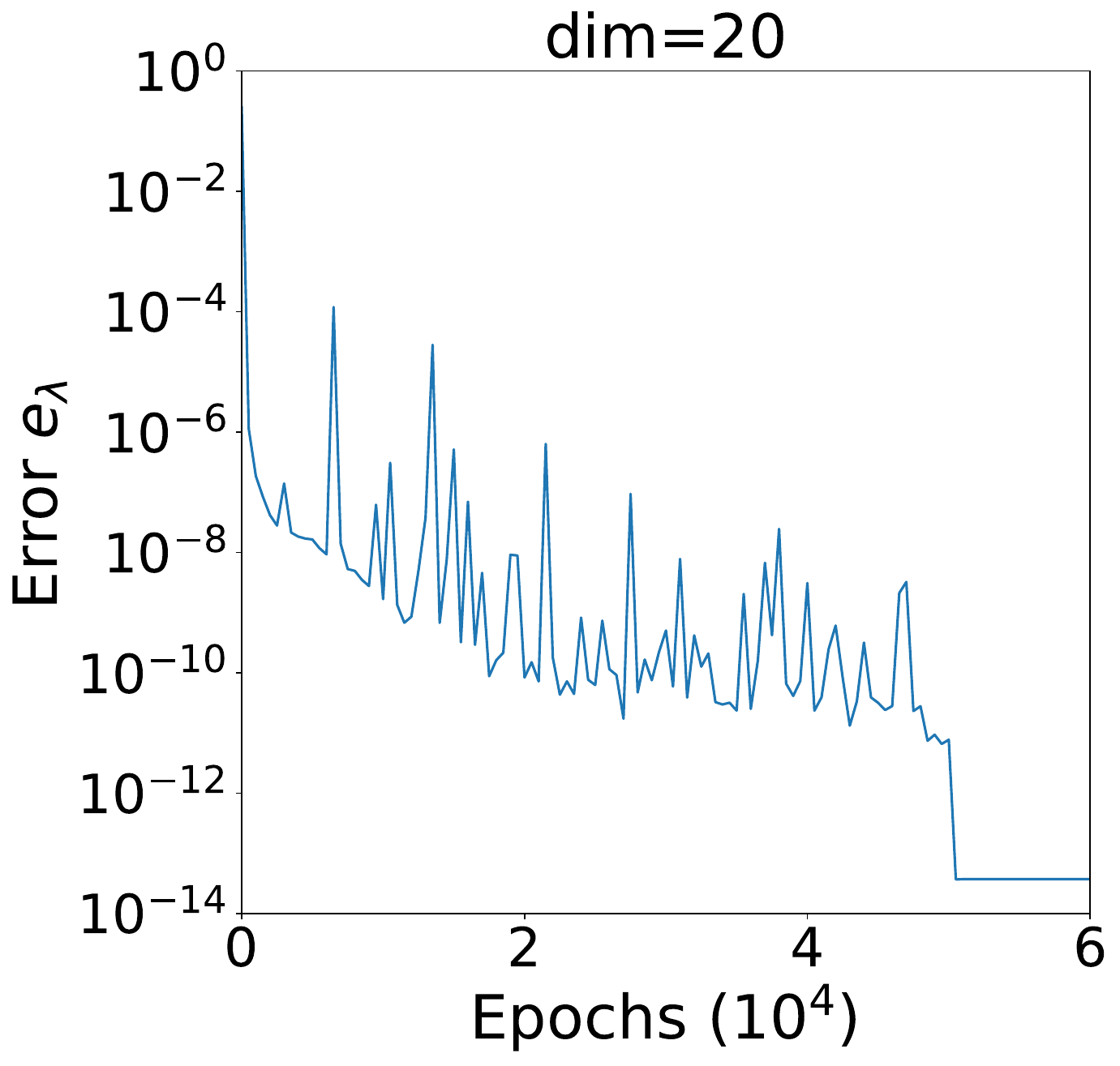}
\includegraphics[width=4.25cm,height=4cm]{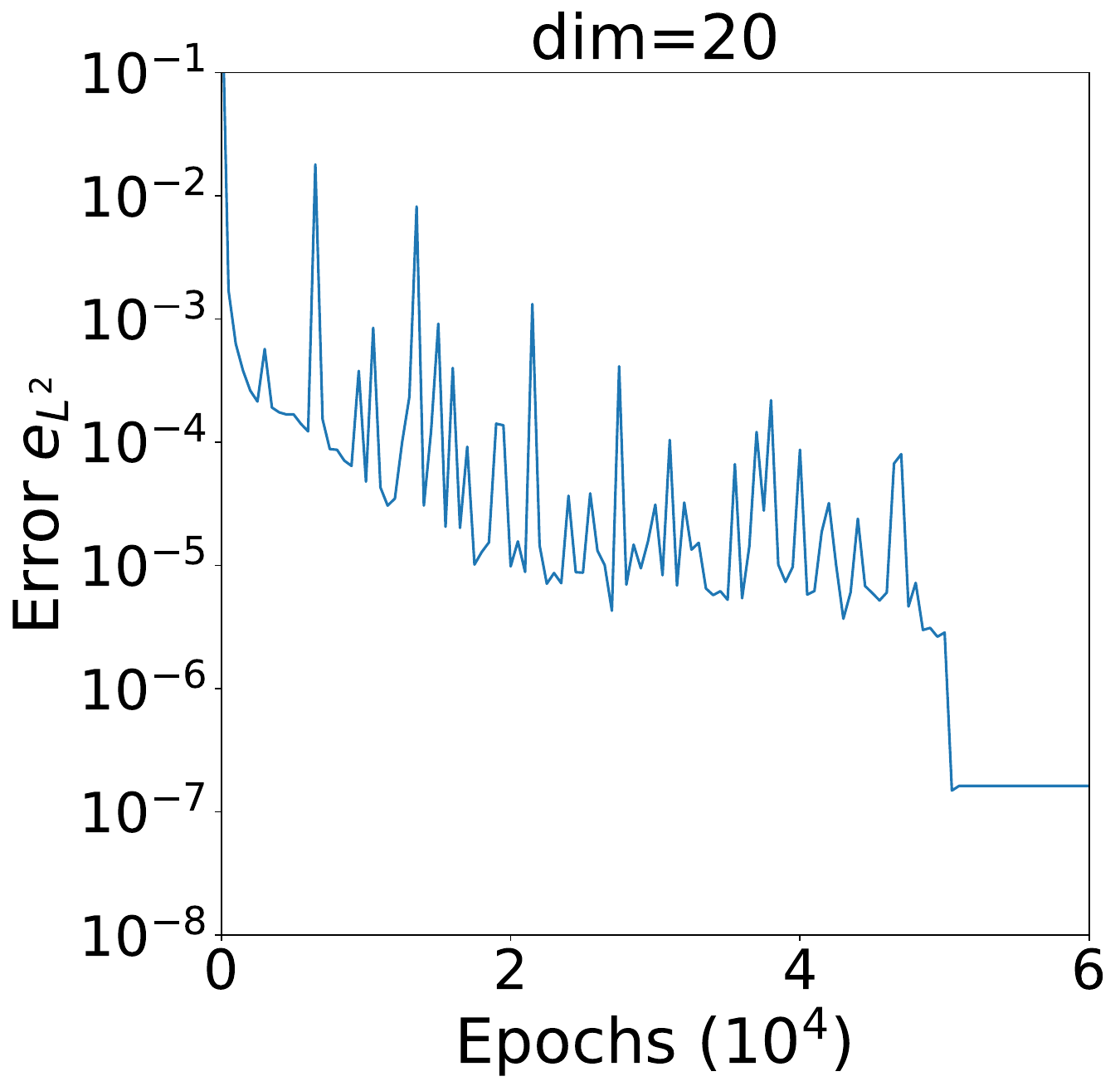}
\includegraphics[width=4.25cm,height=4cm]{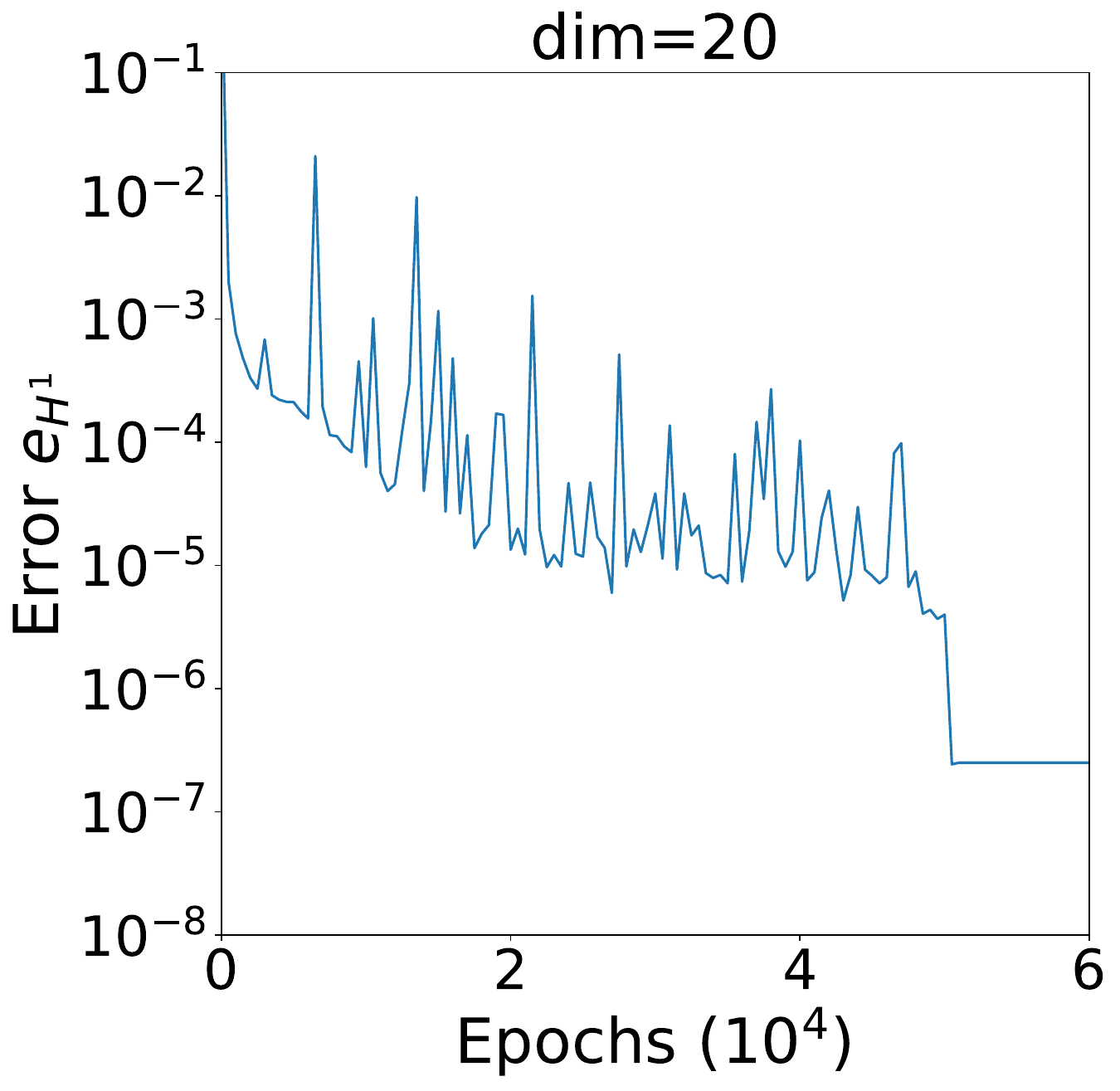}\\
\caption{Relative errors during the training process for Laplace eigenvalue problem: 
$d=5$, $10$, and $20$. The left column shows the relative errors of eigenvalue approximations, 
the middle column shows the relative $L^2(\Omega)$ errors and the 
right column shows the relative $H^1(\Omega)$ errors of eigenfunction 
approximations.}\label{fig_laplace_high}
\end{figure}
In order to show the computing efficiency of 
the proposed TNN-based method, we report 
the average wall time of each 1000 epochs on Tesla V100 and A800, respectively.  
Since the number of inner iterations of each L-BFGS step is different, 
we only report the wall time results 
for the ADAM steps, shown in Table \ref{table_laplace_time}. 
\begin{table}[htb!]
\caption{Average wall time of each 1000 ADAM steps for 
eigenvalue problem on GPUs.}\label{table_laplace_time}
\begin{center}
\begin{tabular}{ccccc}
\hline
$d$&   V100&  A800\\
\hline
5&    $24.503s/1000\ {\rm epochs}$&   $9.346s/1000\ {\rm epochs}$\\
10&   $27.508s/1000\ {\rm epochs}$&   $11.735s/1000\ {\rm epochs}$\\
20&   $29.005s/1000\ {\rm epochs}$&   $30.194s/1000\ {\rm epochs}$\\
\hline
\end{tabular}
\end{center}
\end{table}

\subsection{Eigenvalue problem with harmonic oscillator}
Then, we solve the following eigenvalue problem with harmonic oscillator potential: 
Find $(\lambda, u)$ such that
\begin{equation*}
-\Delta u+\sum_{i=1}^dx_i^2u=\lambda u,\ \ \ x\in\mathbb R^d.
\end{equation*}
The exact smallest eigenpair is $\lambda=d$ and 
$u(x)=\prod_{i=1}^d\exp\left(-\frac{x_i^2}{2}\right)$. 
Notice that the harmonic oscillator problem is defined in the whole space $\mathbb{R}^d$.
Since the potential term $\sum_{i=1}^d x_i^2$ tends to infinity as $x \to \infty$, 
it follows that the eigenfunctions of the problem tend to zero as $x \to \infty$.
We use $200$ Hermite-Gauss quadrature points in practical computations in each dimension.

For the tests with $d=5,10,20$, we choose the subnetworks of 
the TNN for each dimension to be FNNs. Each of the FNNs consists of 3 hidden 
layers with 100 neurons, and the activation function  
for each hidden layer is chosen as the sine function $\sin(x)$. 
The rank parameter for the TNN is set to $p=50$.

It should be noted that during the training process of the harmonic oscillator 
eigenvalue problem, if we directly use the a posteriori error estimator 
in the space $V_p$ as the loss function, it is prone to get stuck 
in local minima, meaning that the obtained eigenpairs may not be the smallest ones.
To overcome this issue, we first use the smallest eigenvalue 
in the space $V_p$ as the loss function, and perform a learning process 
using the ADAM optimizer for 10000 steps to obtain a good initial value. 
Then, we use the a posteriori error estimator in the space $V_p$ 
as the loss function and perform the next 10000 steps of optimization 
using the L-BFGS optimizer. The learning rate for the ADAM optimizer 
is set to $\eta_{\rm adam}=0.01$, and the learning rate for 
the L-BFGS optimizer is set to $\eta_{\rm lbfgs}=1$.
The final errors are shown in Table \ref{table_harmonic}.
The error variations during the L-BFGS training process are shown 
in Figure \ref{fig_harmonic}.

\begin{table}[!htb]
\caption{Errors of the harmonic oscillator eigenvalue problem for $d=5,10,20$.}\label{table_harmonic}
\begin{center}
\begin{tabular}{ccccc}
\hline
$d$&   $e_{\lambda}$&   $e_{L^2}$&   $e_{H^1}$\\
\hline
5&    5.370e-13&   4.131e-07&   8.664e-07\\
10&   1.245e-13&   3.213e-07&   4.733e-07\\
20&   3.523e-13&   7.768e-07&   9.746e-07\\
\hline
\end{tabular}
\end{center}
\end{table}

\begin{figure}[htb]
\centering
\includegraphics[width=4.25cm,height=4cm]{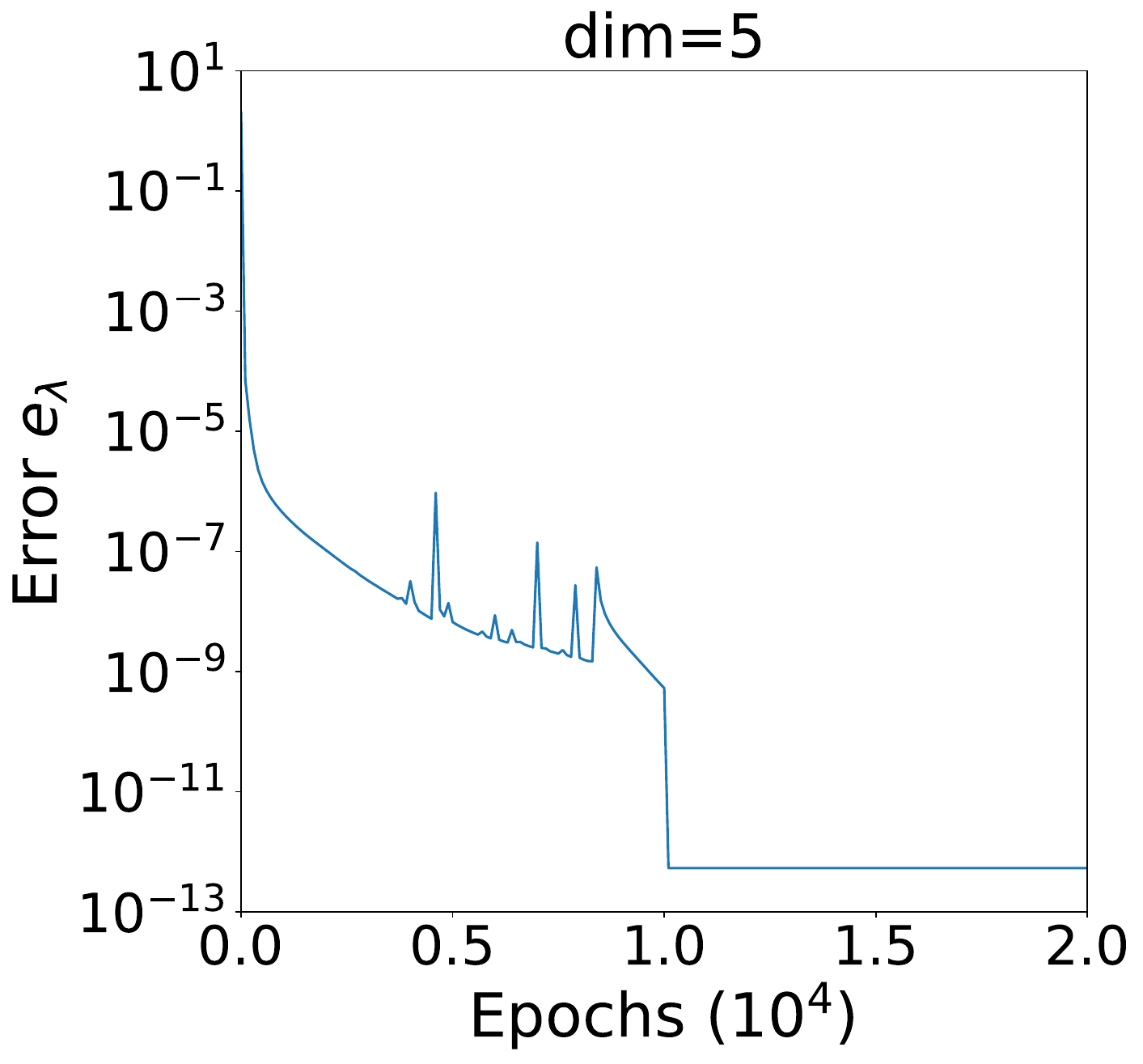}
\includegraphics[width=4.25cm,height=4cm]{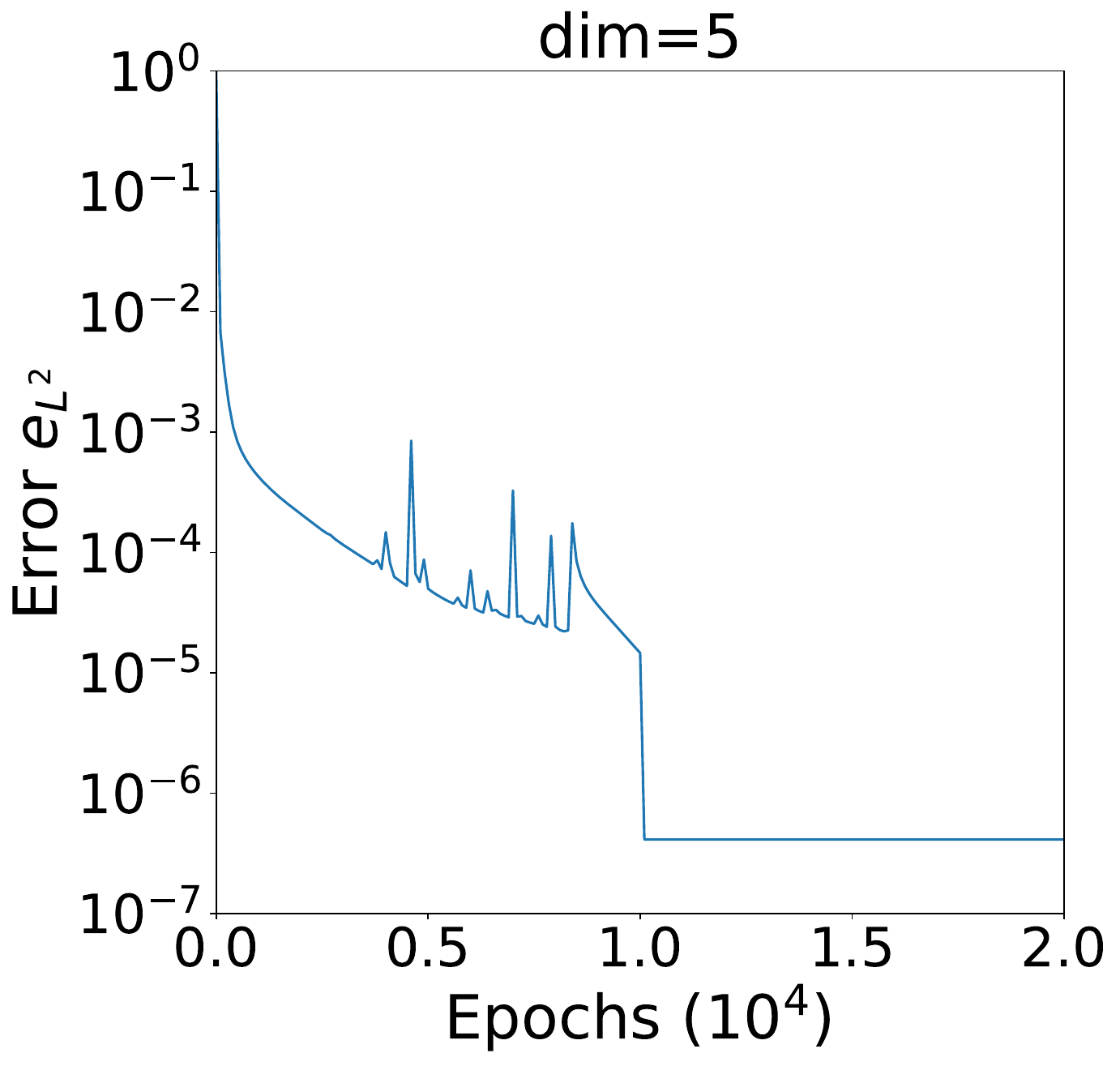}
\includegraphics[width=4.25cm,height=4cm]{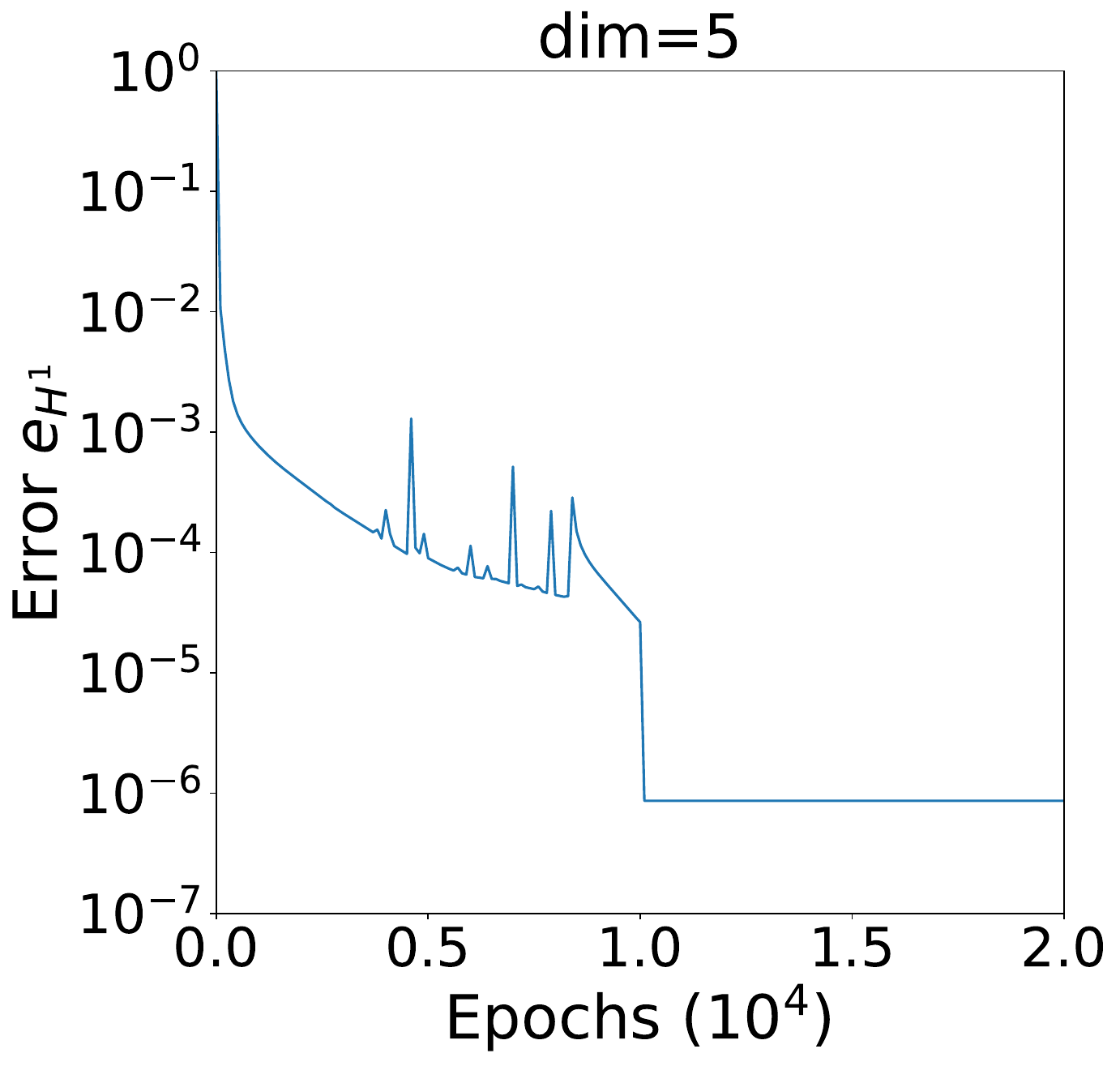}\\
\includegraphics[width=4.25cm,height=4cm]{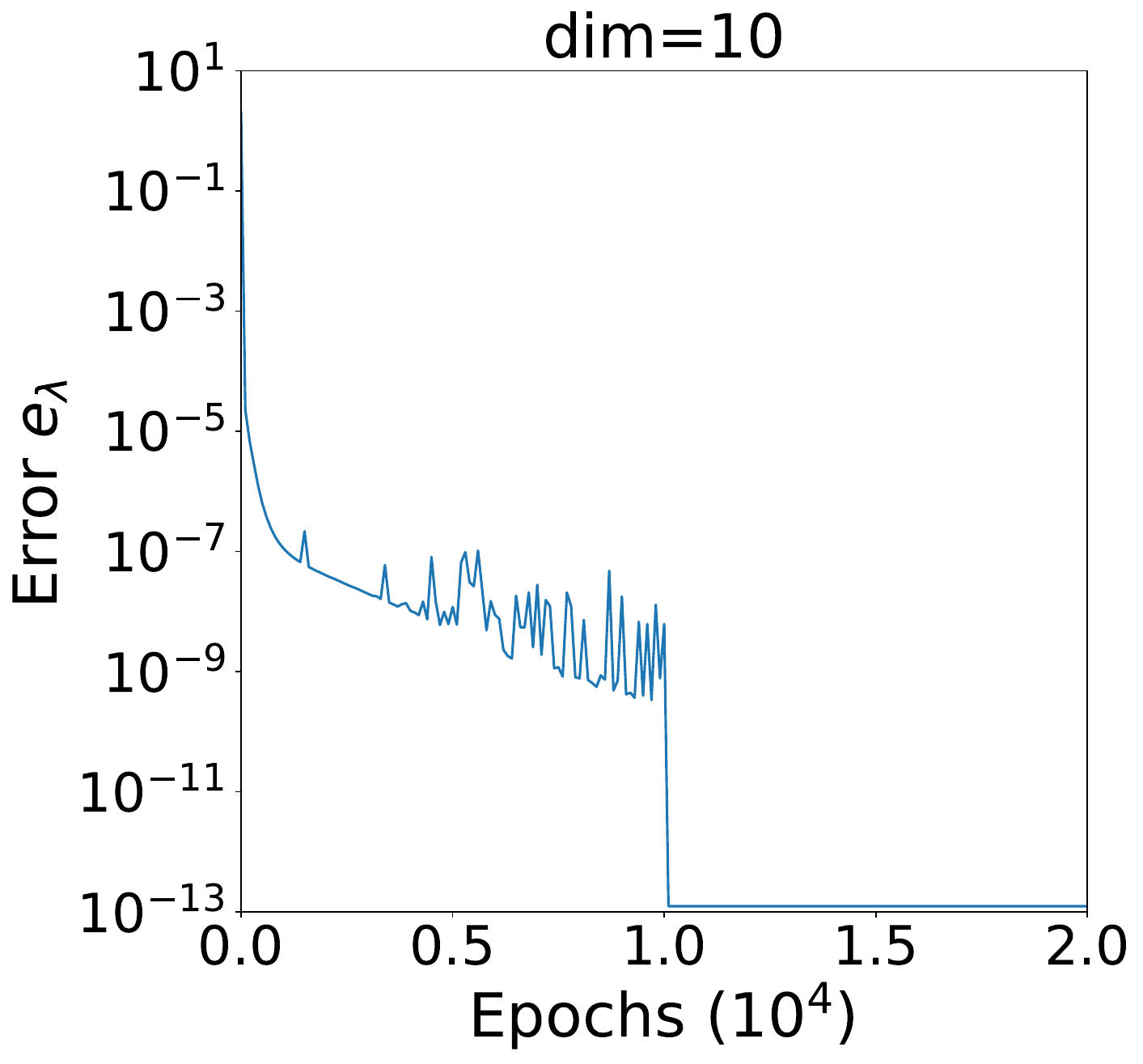}
\includegraphics[width=4.25cm,height=4cm]{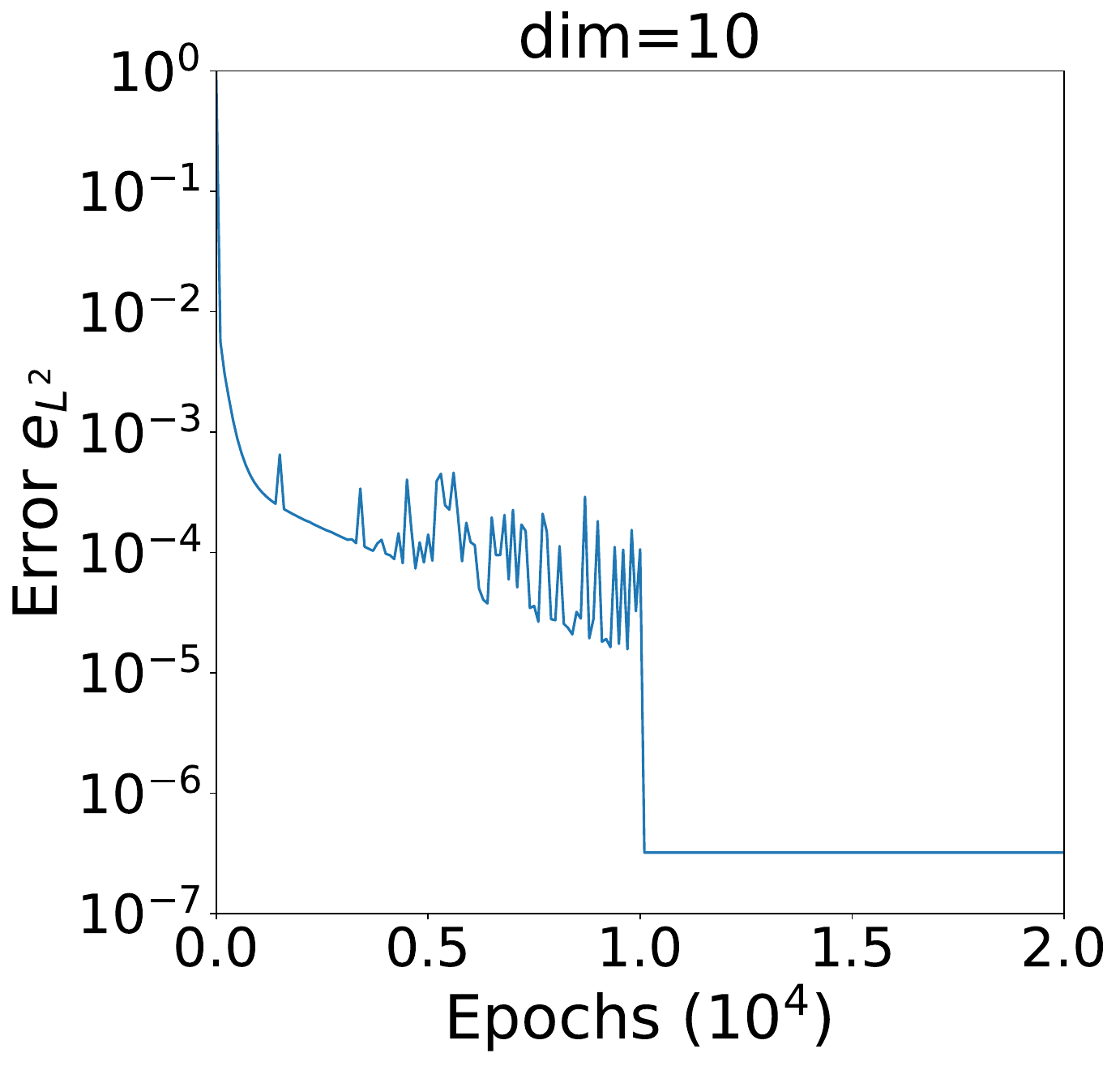}
\includegraphics[width=4.25cm,height=4cm]{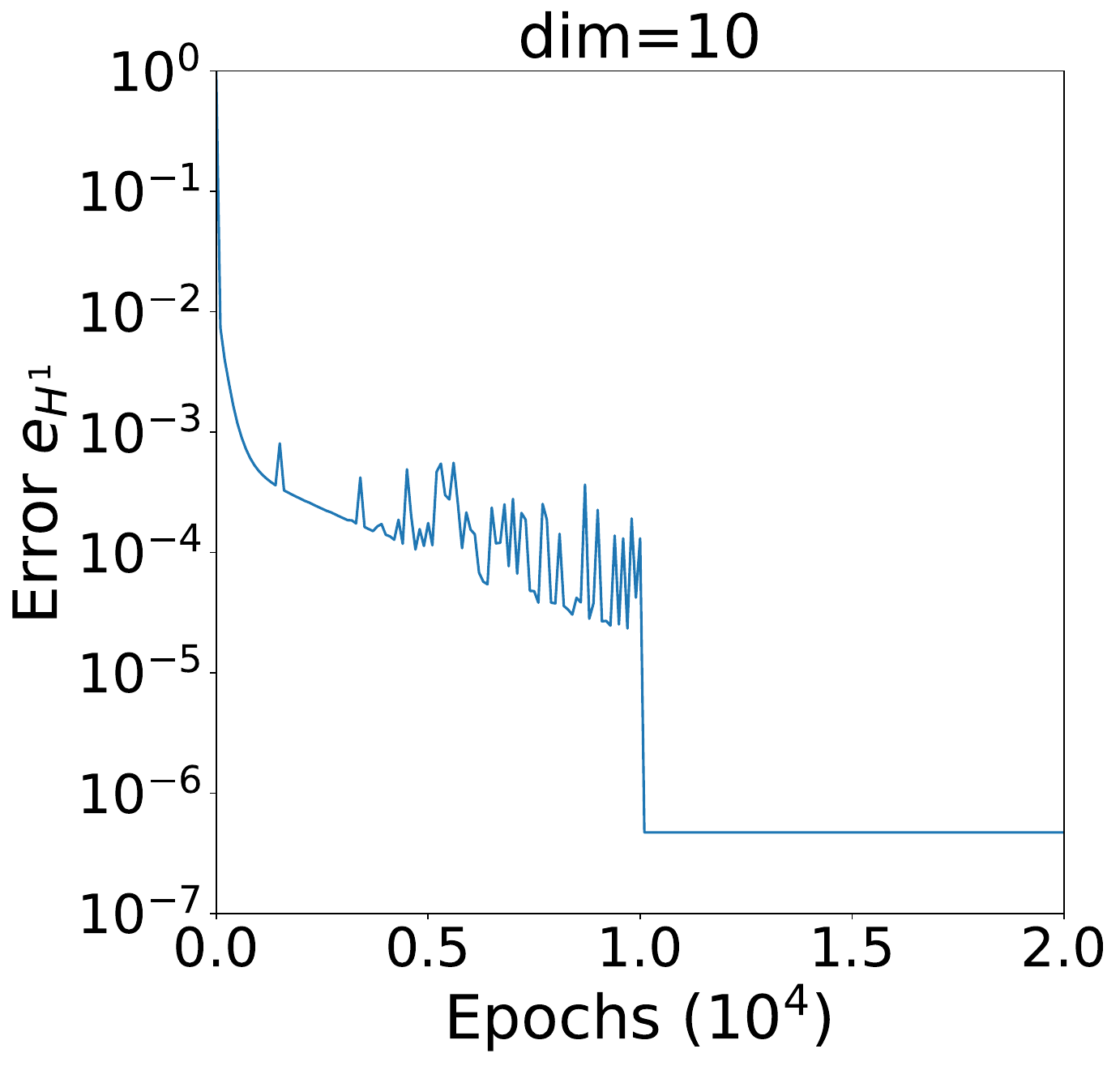}\\
\includegraphics[width=4.25cm,height=4cm]{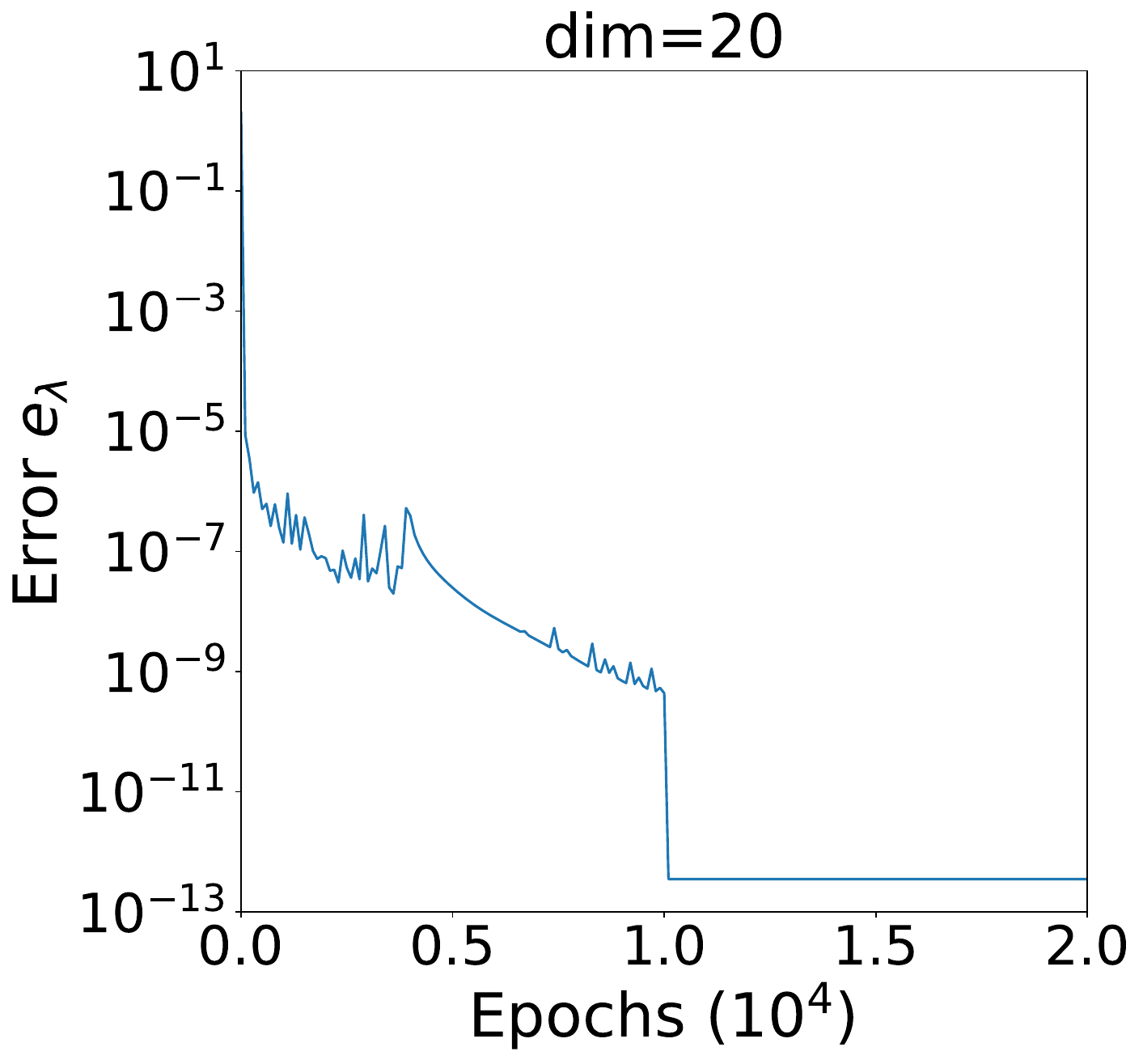}
\includegraphics[width=4.25cm,height=4cm]{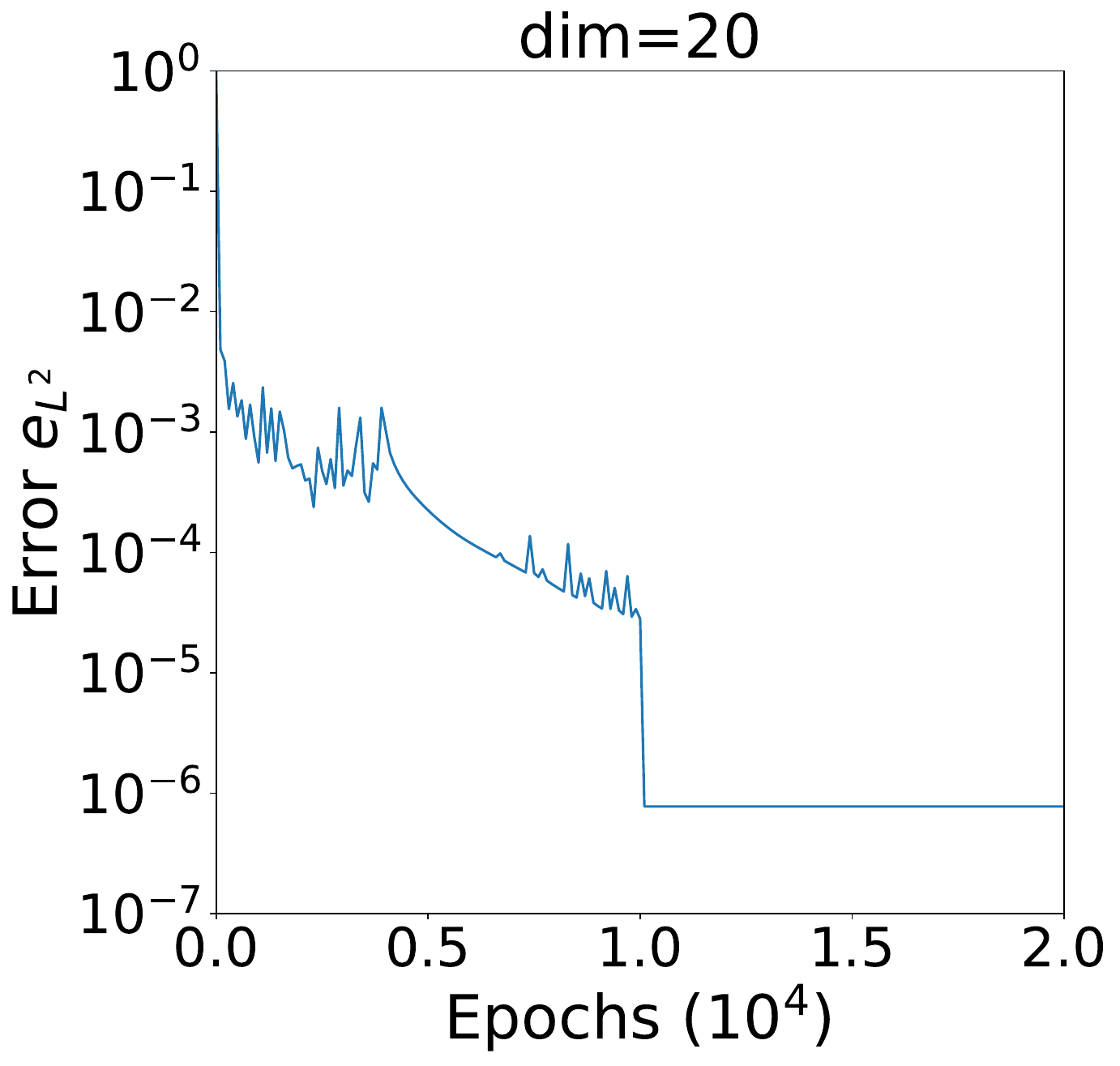}
\includegraphics[width=4.25cm,height=4cm]{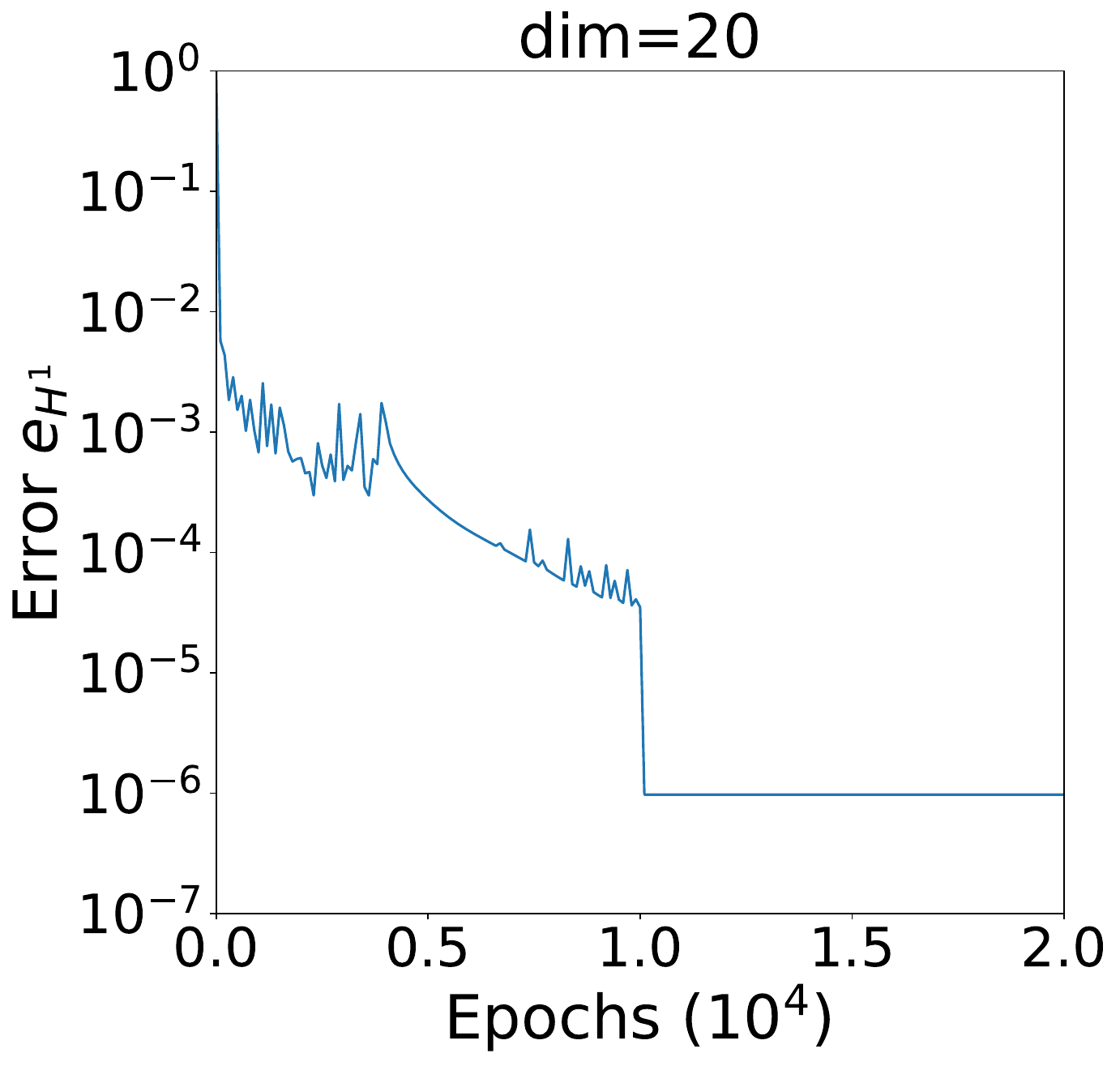}\\
\caption{Relative errors during the L-BFGS training process for harmonic 
oscillator eigenvalue problem: $d=5$, $10$, and $20$. The left column 
shows the relative errors of eigenvalue approximations, the middle 
column shows the relative $L^2(\Omega)$ errors and the right column 
shows the relative $H^1(\Omega)$ errors of eigenfunction approximations.}\label{fig_harmonic}
\end{figure}
To demonstrate the computational efficiency of the proposed method, 
we report the computation time on two different GPU models. 
Since the number of inner iterations in each L-BFGS iteration may vary significantly, 
resulting in different computation time for different tests, 
we only report the computation time for the ADAM process.
The average computation time for every $1000$ steps of the ADAM process is shown in Table \ref{table_harmonic_time}.

\begin{table}[htb!]
\caption{Average wall time of each 1000 ADAM steps for harmonic oscillator eigenvalue problem on GPUs.}\label{table_harmonic_time}
\begin{center}
\begin{tabular}{ccccc}
\hline
$d$&    V100&   A800\\
\hline
5&    $30.216s/1000\ {\rm epochs}$&   $6.510s/1000\ {\rm epochs}$\\
10&   $30.079s/1000\ {\rm epochs}$&   $8.610s/1000\ {\rm epochs}$\\
20&   $39.598s/1000\ {\rm epochs}$&   $13.372s/1000\ {\rm epochs}$\\
\hline
\end{tabular}
\end{center}
\end{table}

\section{Concluding remarks}
In this paper, we present a TNN-based machine-learning methods 
to solve the Dirichlet boundary value problem, Neumann boundary value problem, 
and eigenvalue problem of second-order elliptic operators in high dimensional space.  
With the help of high accuracy of high dimensional integrations of TNN functions, 
the a posteriori error estimator can be adopted to act as the loss function for 
the machine learning method, which is the main contribution of this paper. 
The neural network parameters are updated by minimizing 
this a posteriori error estimator which serves as an upper bound of the exact energy 
error for the concerned problems. Numerical results demonstrate that the selection of this new 
loss function improves accuracy.

Furthermore, high accuracy 
is also obtained for the homogeneous and non-homogeneous boundary value problems 
by the TNN-based machine learning methods in this paper. Compared with other 
NN-based machine learning methods, the proposed methods here do not need 
hyperparameters to balance the losses from boundary and domain, 
which saves the trouble of pruning the hyperparameters and therefore 
contributes to the optimization process.

Finally, it should also be noted that the method is currently unsuitable 
for non-self-adjoint operators. 
This topic will be addressed in our future work.

\bibliographystyle{siamplain}

\end{document}